\def\eu{\mathfrak}
\def\ma{\mathbb}
\def\mc{\mathcal}
\def\p{{\mathcal P}_{\infty}}
\def\f{{\ma F}_q^{\ast}}
\def\F{{\ma F}_q}
\def\P{\mathcal P}
\def\G{\mathcal G}
\def\H{\mathcal H}
\def\Q{\mathcal Q}
\def\L{{\mathcal L}_{{\ma F}_p}}
\def\fin{\hfill\qed\bigskip}
\def\elemental#1#2#3{#1_{#3-1}#2^{p^{#3-1}}+#1_{#3-2}#2^{p^{#3-2}}+
\cdots +#1_2#2^{p^2} + #1_1#2^p+#1_0#2}
\def\elementalito#1#2#3{#1_{#3-1}#2^{p^{#3-1}}+
\cdots + #1_1#2^p+#1_0#2}
\def\matriz#1#2{\left[\begin{array}{ccccc}
#1& #1^p&\cdots&#1^{p^{n-2}}&#1^{p^{n-1}}\\
#2_2&#2_2^p&\cdots&#2_2^{p^{n-2}}&#2_2^{p^{n-1}}\\
\vdots&\vdots&\ddots&\vdots&\vdots\\
#2_{n-1}&#2_{n-1}^p&\cdots&#2_{n-1}^{p^{n-2}}&#2_{n-1}^{p^{n-1}}\\
#2_n&#2_n^p&\cdots&#2_n^{p^{n-2}}&#2_n^{p^{n-1}}
\end{array}\right]}
\def\vmatriz#1{\left[\begin{array}{ccccc}
\vec#1_1& \vec#1^p_1&\cdots&\vec#1^{p^{n-2}}_1&\vec #1^{p^{n-1}}_1\\
\vec#1_2&\vec#1_2^p&\cdots&\vec#1_2^{p^{n-2}}&\vec#1_2^{p^{n-1}}\\
\vdots&\vdots&\ddots&\vdots&\vdots\\
\vec#1_{n-1}&\vec#1_{n-1}^p&\cdots&\vec#1_{n-1}^{p^{n-2}}&
\vec#1_{n-1}^{p^{n-1}}\\
\vec#1_n&\vec#1_n^p&\cdots&\vec#1_n^{p^{n-2}}&\vec#1_n^{p^{n-1}}
\end{array}\right]}
\def\lra{\longrightarrow}
\def\Witt#1{\stackrel{_{\bullet}}{#1}}
\def\vWitt#1#2{#1^{(1)},\ldots,#1^{(#2)}\big)}
\def\v#1#2{\big({#1}_1,\ldots,{#1}_{#2}}
\def\Wittc#1#2{\v#1#2\mid\vWitt#1#2}
\newcommand{\Irr}{\operatorname{Irr}}
\newcommand{\Gal}{\operatorname{Gal}}
\newcommand{\mcd}{\operatorname{gcd}}
\newcommand{\Id}{\operatorname{Id}}
\newcommand{\Tr}{\operatorname{Tr}}
\renewcommand{\o}{\mathcal O}
\newcommand{\im}{\operatorname{im}}
\newcounter{bean}
\newcounter{2bean}
\def\l{
\begin{list}
{\rm{(\alph{bean}).-}}{\usecounter{bean}
\setlength{\labelwidth}{0.8in}
\setlength{\labelsep}{0.3cm}
\setlength{\leftmargin}{1cm}}}
\def\las{\begin{list}
	{{\rm {(\arabic{2bean})}}}{\usecounter{2bean}
\setlength{\labelwidth}{0.8in}
\setlength{\labelsep}{0.3cm}
\setlength{\leftmargin}{1cm}}}
\numberwithin{equation}{section}
\newtheorem{teorema}{Theorem}[section]
\newtheorem{proposicion}[teorema]{Proposition}
\newtheorem{lema}[teorema]{Lemma}
\newtheorem{ejemplo}[teorema]{Example}
\newtheorem{observacion}[teorema]{Remark}
\newtheorem{definicion}[teorema]{Definition}
\newtheorem{corolario}[teorema]{Corollary}
\title[Abelian $p$--extensions and additive polynomials]
{Abelian $p$--extensions and additive polynomials}
\author[J. Barreto]{Jonny Fernando Barreto--Casta\~neda}
\address{Departamento de Control Autom\'atico\\
Centro de Investigaci\'on y de Estudios Avanzados del I.P.N.}
\email{jbarreto@ctrl.cinvestav.mx}
\author[F. Jarqu\'in]{Fausto Jarqu\'in--Z\'arate}
\address{Universidad Aut\'onoma de la Ciudad de M\'exico\\
Academia de Matem\'aticas. Plantel San Lorenzo Tezonco\\
Prolongaci\'on San Isidro No. 151, Col. San Lorenzo,
Iztapalapa, C.P. 09790, M\'exico, D.F.}
\email{fausto.jarquin@uacm.edu.mx}
\author[M. Rzedowski]{Martha Rzedowski--Calder\'on}
\address{Departamento de Control Autom\'atico\\
Centro de Investigaci\'on y de Estudios Avanzados del I.P.N.}
\email{mrzedowski@ctrl.cinvestav.mx}
\author[G. Villa]
{Gabriel Villa--Salvador}
\address{
Departamento de Control Autom\'atico\\
Centro de Investigaci\'on y de Estudios Avanzados del I.P.N.}
\email{gvillasalvador@gmail.com, gvilla@ctrl.cinvestav.mx}
\subjclass[2010]{Primary 11R58; Secondary 11R60, 11R29}
\keywords{Global function fields, ramification, elementary abelian
$p$--extensions, Artin--Schreier extensions}
\date{June 7th., 2016}
\begin{document}

\begin{abstract}

In this work we present some arithmetic properties of
families of abelian $p$--extensions of global function
fields, among which are their generators and their type
of ramification and decomposition.

\end{abstract}

\maketitle

\section{Introduction}\label{S1}

The study of elementary abelian $p$--extensions has been
considered intensively by several authors. Usually, the way to
study this type of extensions is to consider all its subextensions
of degree $p$ and then apply Hasse's criterion \cite{Has34}
as well as facts known about the behavior of the primes in these
subextensions.
Once this study has been done, one returns to the total extension.
Garcia and Stichtenoth \cite{GarSti91}, changing the usual point of
view, considered those extensions that can be given by
a special type of additive polynomial; namely, the extensions
given by an equation of the form $y^q-y=\alpha$. 
In their research, they obtained the genus of these extensions and
used their results to build towers of function fields where the genus
grows much faster than the number of rational points. In the
same paper the authors mention that similar results can be 
obtained for additive polynomials having their roots in the
base field.

In this paper we consider an additive 
polynomial $f(X)$ whose roots
belong to the base field and we prove results analogous to
the ones obtained by Garcia and Stichtenoth. For instance,
given an additive polynomial, we show that any elementary
abelian $p$--extension can be described by an equation of
the type $f(X)=u$.

When the base field is a global rational function field, it is
possible to give a lower bound for the ramification index
of the ramified primes without considering its subextensions
of degree $p$. It is also possible to characterize the
fully decomposed primes. 
In the case of cyclic extensions of degree
$p$ given by an Artin--Schreier equation, the relation between
two distinct generators is well known. In this article we give
the corresponding result for elementary abelian $p$--extensions
obtained by means of additive polynomials.

In the last part of this work, we generalize the results obtained
to extensions which will be called {\em multicyclic}, and which
are given by a Witt equation of the form $\vec y^q\Witt - 
\vec y=\vec \alpha$. Much of the formalism of the elementary
abelian $p$--extensions can be translated to this new
case and in fact it is possible to keep generalizing these
results to other {\em additive vectorial polynomials} whose
roots belong to the Witt ring of the base field. In this paper
we restrict ourselves to the case of an equation of the 
type $\vec y^q\Witt - \vec y=\vec \alpha$.

\section{Notations}\label{S2}

Let $p$ a prime number, $n\in {\ma N}$ and $q=p^n$. Let $k$
be an arbitrary field of characteristic $p$. When $k$ is a
function field, $k_0$ will denote the field of constants of $k$
and we will assume that $k_0$ is a perfect field. Let
$f(X)\in k[X]$ be an additive polynomial, that is,
 $f(x+y)=f(x)+f(y)$ for every $x,y\in \bar{k}$, a fixed algebraic 
 closure of $k$. Then $f(X)$ is given by
 \begin{equation}\label{Eq2.1}
f(X)=\sum_{i=0}^n a_iX^{p^i},
\end{equation}
with $a_i\in k$. We will assume that $f$ is monic and separable,
that is, $a_n=1$ and $a_0\neq 0$. Furthermore, we will assume that
the roots of $f(X)$ belong to the base field, that is,
\begin{equation}\label{Eq2.2}
\G_f:=\{\xi\in \bar{k}\mid f(\xi)=0\}\subseteq k.
\end{equation}
As a special case, we will consider the additive polynomial
$f(X)=X^{p^n}-X=X^q-X$.

In general, when we consider polynomials of the form
$F(X)=f(X)-u\in k[X]$, we will assume that $F(X)$ is
irreducible. Let $K=k(y)$ with $f(y)=u\in k$, that is,
$F(y)=0$. Then $f(y+\xi)=f(y)+f(\xi)= f(y)=u$ for all
$\xi\in\G_f$ so that the set of roots of $F(X)$ is
\begin{gather*}
y+\G_f=\{y+\xi\mid\xi\in\G_f\}.
\end{gather*}

We have that every element $\sigma$ of $G=\Gal(K/k)$ 
is determined by $\sigma(y)$. Since $y$ and $\sigma (y)$ 
are conjugate, there exists $\xi_{\sigma}\in \G_f$ such that
$\sigma(y)=y+\xi_{\sigma}$.
From (\ref{Eq2.2}), we have that the extension
$K=k(y)$ of $k$ is a Galois extension of degree $p^n$.

\begin{proposicion}\label{P2.1} In general, for any additive
polynomial $f(X)\in k[X]$ of degree $p^n$, we have that
$\G_f$ is an additive group $\G_f\subseteq (\bar{k},+)$
isomorphic to $C_p^n=\big({\ma Z}/p{\ma Z}\big)^n$.
That is, $\G_f$ is an $\ma F_p$--vector space of dimension $n$.
\end{proposicion}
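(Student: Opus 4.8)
The plan is to establish three things in order: that $\G_f$ is a subgroup of $(\bar{k},+)$, that this subgroup is automatically an $\ma F_p$--vector space, and finally that its cardinality is exactly $p^n$.

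First I would read off the group structure directly from additivity. Setting $x=y=0$ in $f(x+y)=f(x)+f(y)$ gives $f(0)=0$, so $0\in\G_f$. For $\xi,\eta\in\G_f$ one has $f(\xi+\eta)=f(\xi)+f(\eta)=0$, and additivity likewise forces $f(-\xi)=-f(\xi)=0$; hence $\G_f$ is closed under addition and inverses, so it is a subgroup of $(\bar{k},+)$.

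Next I would upgrade this to a vector space. Because the characteristic of $k$ equals $p$, every element $x\in\bar{k}$ satisfies $px=0$, so $(\bar{k},+)$ is naturally an $\ma F_p$--vector space. Scalar multiplication by $c\in\ma F_p=\{0,1,\ldots,p-1\}$ is just $c$--fold addition, under which $\G_f$ is closed by the previous step; therefore $\G_f$ is an $\ma F_p$--subspace of $\bar{k}$.

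The essential point is the dimension count, and this is where the hypotheses on $f$ are used. Differentiating $f(X)=\sum_{i=0}^n a_iX^{p^i}$, every term with $i\geq 1$ contributes $a_i\,p^i X^{p^i-1}=0$ since $p\mid p^i$, leaving $f'(X)=a_0$. The standing assumption $a_0\neq 0$ makes $f'$ a nonzero constant, so $\mcd(f,f')=1$ and $f$ is separable. A separable polynomial of degree $p^n$ has exactly $p^n$ distinct roots in $\bar{k}$, so $|\G_f|=p^n$; an $\ma F_p$--vector space of that cardinality has dimension $n$, and hence $\G_f\cong C_p^n$. The main obstacle, indeed the only substantive step, is justifying separability, which rests entirely on the derivative collapsing to the constant term $a_0$.
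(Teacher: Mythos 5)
Your proof is correct and follows essentially the same route as the paper's: verify that $\G_f$ is a subgroup of $(\bar{k},+)$, note that characteristic $p$ makes it an $\ma F_p$--vector space, and conclude from $|\G_f|=p^n$ that $\G_f\cong C_p^n$. The only difference is cosmetic: you derive separability explicitly from $f'(X)=a_0\neq 0$ (the paper takes separability as a standing convention from Section \ref{S2} and simply asserts $|\G_f|=p^n$), and you obtain inverses via $f(-\xi)=-f(\xi)$ where the paper writes $-\alpha=(p-1)\alpha$.
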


\begin{proof} For $\alpha,\beta\in\G_f$ we have
\begin{gather*}
f(\alpha+\beta)=f(\alpha)+f(\beta)=0+0=0,\\
f(0)=0,\quad f(-\alpha)=f((p-1)\alpha)=
\underbrace{f(\alpha)+\cdots +f(\alpha)}_{p-1}=
0+\cdots+0=0.
\end{gather*}

Therefore $\G_f\subseteq (\bar{k},+)$. Finally, 
$p\beta =0$ for all $\beta\in \bar{k}$ and $|\G_f|=p^n$,
so that $\G_f\cong C_p^n$.
\end{proof}

In general, if $V$ is a finite $p$--subgroup of $k$, then we
denote:
\begin{gather*}
f_V(X)=\prod_{\delta\in V}(X-\delta)
\end{gather*}
which is an additive polynomial (\cite[proof of
Proposition 13.4.10]{Vil2006}). In particular an
additive polynomial $f(X)\in k[X]$ (\ref{Eq2.1})
satisfies $f(X)=f_{\G_f}(X)$.

\begin{proposicion}\label{P2.2} With the above notation, we have
that $\theta\colon G\lra \G_f$ given by
$\theta(\sigma)=\xi_{\sigma}$, where $\sigma y=y+\xi_{\sigma}$,
is a group monomorphism so that we may consider $G\subseteq
\G_f$. When $F(X)$ is irreducible, we have the equality $G=\G_f$.
\end{proposicion}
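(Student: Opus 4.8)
The plan is to show $\theta$ is a homomorphism, then that it is injective, and finally that it is onto when $F$ is irreducible.
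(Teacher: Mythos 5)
Your outline names the correct three tasks, and in fact it mirrors the skeleton of the paper's own argument, but as written it is a plan rather than a proof: none of the steps is carried out, and each one requires a specific observation that is absent. For the homomorphism property you must actually compute
\[
\sigma\tau(y)=\sigma(y+\xi_{\tau})=\sigma(y)+\xi_{\tau}=y+\xi_{\sigma}+\xi_{\tau},
\]
and the middle equality is not free: it uses that $\xi_{\tau}\in\G_f\subseteq k$, so $\sigma$ fixes $\xi_{\tau}$. This is exactly where the standing hypothesis $\G_f\subseteq k$ enters; without flagging it the computation has no justification. For injectivity you must note that $\theta(\sigma)=0$ gives $\sigma(y)=y$, and that this forces $\sigma=\Id$ because $K=k(y)$, so every element of $G$ is determined by its value on $y$.

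The most serious omission is the last step. The paper does not prove surjectivity "directly"; it counts: when $F(X)=f(X)-u$ is irreducible, $|G|=[K:k]=\deg F(X)=p^n$, while $|\G_f|=p^n$ by Proposition \ref{P2.1} (which in turn relies on $f$ being separable, so that $f$ has $p^n$ distinct roots). An injective map between finite sets of the same cardinality is a bijection, whence $G=\G_f$. Your plan to show $\theta$ is "onto" gives no indication of this cardinality argument, which is the only realistic route here — there is no way to exhibit, for an arbitrary $\xi\in\G_f$, a preimage $\sigma$ other than by knowing in advance that $\Gal(K/k)$ is large enough, i.e., by the degree count. Supply these three computations and the counting step and the proof is complete and coincides with the paper's.
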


\begin{proof}
For $\sigma,\tau\in G$, we have
\begin{gather*}
y+\xi_{\sigma\tau}=\sigma\tau(y)=
\sigma(y+\xi_{\tau})=\sigma(y)+\xi_{\tau}=y+\xi_{\sigma}+\xi_{\tau},\\
\intertext{so that}
\theta(\sigma\tau)=\xi_{\sigma\tau}=\xi_{\sigma}+\xi_{\tau}=
\theta(\sigma)+\theta(\tau).
\end{gather*}
If $\theta(\sigma)=0$, then $\sigma(y)=y$ which implies
$\sigma=\Id$. It follows that $\theta$ is a group monomorphism.

When $F(X)$ is irreducible, we have $|G|=[K:k]=\deg F(X)=
p^n=|\G_f|$. 
\end{proof}

\begin{observacion}\label{O2.3} {\rm{In case $\G_f\nsubseteq k$,
we have that the decomposition field of $f(X)$ is $k(\G_f,y)$ and
therefore $K=k(y)$ is not a normal extension of $k$. In fact we have
$\Gal(k(\G_f,y)/k)\cong \Gal(k(\G_f)/k)\ltimes\Gal(k(\G_f,y)/k(\G_f))$.
}}
\end{observacion}

In general, if $\beta_1,\ldots,\beta_m\in k$, $\L\{\beta_1,\ldots,\beta_m\}$
denotes the ${\ma F}_p$--vector space generated by $\beta_1,
\ldots,\beta_m$.

Finally, the Artin--Schreier operator is denoted by $\wp$, that is,
$\wp(c)=c^p-c$ and $\wp_a$ ($a\neq 0$)
denotes the map $\wp_a(c)=c^p-a^{p-1}c=
a^p\wp\big(\frac{c}{a}\big)$. In case of a function field
$k/k_0$, with $k_0$ a finite field,
$R_T$ denotes the polynomial ring $k_0[T]$ and
$R_T^+$ denotes the set of monic polynomials of $k_0[T]$.

The notation on Witt vectors will be given in Section \ref{S8}.

\section{The polynomial $X^q-X$}\label{S3}

In \cite{GarSti91} Garcia and Stichtenoth made a very  thorough and
complete study of extensions $K/k$ when $k$ is a function field,
$K=k(y)$ with $y^q-y=u\in k$, $\F\subseteq k_0$ and they
claim that analogous results hold for additive polynomials whose
roots belong to $k_0$. In this section we consider the case
$f(X)=X^q-X\in k_0[X]$ and recall the main result of
Garcia and Stichtenoth. 

\begin{proposicion}[Garcia and Stichtenoth, \cite{GarSti91}]\label{P3.1} 
Assume that $K/k$ is
an elementary abelian $p$--extension of degree $p^n$ and
such that ${\ma F}_{p^n} \subseteq k_0$. Then, there exists
$y\in K$ such that $K=k(y)$ whose minimal polynomial is 
$\Irr(X,y,k)=X^{p^n}-X-a$ for some $a\in k$.

Conversely, if ${\ma F}_{p^n}\subseteq k_0$ and $\varphi(X)=
X^{p^n}-X-a\in k[X]$ is irreducible, then $K=k(y)$ with $\varphi
(y)=0$ is an elementary abelian $p$--extension of degree $p^n$.
The intermediate fields $k\subseteq E_{\mu}\subseteq K$
of degree $p$ over $k$, are given by
$E_{\mu}=k(y_{\mu})$ with $\mu\in{\ma F}_{p^n}^{\ast}$ and
\[
y_{\mu}:=(\mu y)^{p^{n-1}}+(\mu y)^{p^{n-2}}+\cdots+(\mu y)^p+(\mu p),
\]
$y_{\mu}^p-y_{\mu}=\mu a$; therefore $k(y)=k(\wp^{-1}(U))$ with
$U=\{\mu a\mid \mu \in {\ma F}_{p^n}\}$. \fin
\end{proposicion}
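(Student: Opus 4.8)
The plan is to base everything on the operator identity $X^{p^n}-X=\wp\bigl(N(X)\bigr)$, where $N(X)=X^{p^{n-1}}+\cdots+X^p+X=\sum_{i=0}^{n-1}X^{p^i}$; one checks at once that $\wp(N(X))=N(X)^p-N(X)=X^{p^n}-X$, the sum telescoping. Both $\wp$ and $N$ are additive (polynomials in the Frobenius), and the restriction of $N$ to $\ma F_{p^n}$ is exactly $\Tr_{\ma F_{p^n}/\ma F_p}$. This identity, together with $\mu^{p^n}=\mu$ for $\mu\in\ma F_{p^n}$, is what links the additive polynomial $f(X)=X^{p^n}-X$ to ordinary Artin--Schreier theory, and it drives both implications.

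For the converse I would first apply Propositions \ref{P2.1} and \ref{P2.2} to $f(X)=X^{p^n}-X$, whose roots form $\G_f=\ma F_{p^n}\subseteq k_0\subseteq k$: this gives that $K=k(y)$ is Galois of degree $p^n$ with $G=\Gal(K/k)\cong\G_f=\ma F_{p^n}\cong C_p^n$, so $K/k$ is elementary abelian. Setting $y_\mu:=N(\mu y)$ for $\mu\in\ma F_{p^n}^{\ast}$, the identity yields $y_\mu^p-y_\mu=\wp(N(\mu y))=(\mu y)^{p^n}-\mu y=\mu(y^{p^n}-y)=\mu a$, so $E_\mu=k(y_\mu)$ is the Artin--Schreier extension $k(\wp^{-1}(\mu a))$. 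To see these are precisely the degree-$p$ subfields I would compute the fixing group: writing $\sigma y=y+\xi_\sigma$ with $\xi_\sigma\in\G_f$, additivity of $N$ gives $\sigma(y_\mu)=y_\mu+N(\mu\xi_\sigma)=y_\mu+\Tr_{\ma F_{p^n}/\ma F_p}(\mu\xi_\sigma)$, so $\Gal(K/E_\mu)=\{\sigma:\Tr(\mu\xi_\sigma)=0\}$ is a hyperplane and $[E_\mu:k]=p$. Nondegeneracy of the trace form makes $\mu\mapsto(\xi\mapsto\Tr(\mu\xi))$ an isomorphism $\ma F_{p^n}\to\operatorname{Hom}(\ma F_{p^n},\ma F_p)$, whence every index-$p$ subgroup of $G$ arises this way and $E_\mu=E_{\mu'}$ exactly when $\mu'/\mu\in\ma F_p^{\ast}$; since these hyperplanes intersect trivially, their compositum $k(\wp^{-1}(U))$ with $U=\ma F_{p^n}a$ is all of $K$.

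For the forward implication the point is to produce a single generator. I would look for $y\in K$ with $\sigma y-y\in\ma F_{p^n}$ for every $\sigma\in G$; this forces $\sigma(f(y))=f(\sigma y)=f(y)+f(\sigma y-y)=f(y)$, the last equality since $\sigma y-y\in\ma F_{p^n}=\ker f$, so $a:=f(y)\in K^G=k$. Moreover $\sigma\mapsto\sigma y-y$ is then a homomorphism $\psi_y\colon G\to\ma F_{p^n}$ whose kernel is $\Gal(K/k(y))$, so $k(y)=K$ as soon as $\psi_y$ is injective. To realize this, fix any group isomorphism $\phi\colon G\xrightarrow{\sim}\ma F_{p^n}$ (possible since $G\cong C_p^n\cong\ma F_{p^n}$). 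Because $G$ fixes $\ma F_{p^n}\subseteq k$, $\phi$ is a $1$-cocycle for the action of $G$ on $(K,+)$, and additive Hilbert~90 ($H^1(G,K^{+})=0$, equivalently the normal basis theorem) yields $y\in K$ with $\sigma y-y=\phi(\sigma)$. Then $\psi_y=\phi$ is bijective, $k(y)=K$, and $y$ is a root of $X^{p^n}-X-a$ with $a\in k$; since $\deg(X^{p^n}-X-a)=p^n=[K:k]$, this is $\Irr(X,y,k)$.

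The routine parts are the operator identity and the trace-form computations; the one genuine idea, and the step I expect to be the main obstacle, is the forward direction. The trick is to hunt for the generator inside $\{y:\sigma y-y\in\ma F_{p^n}\}$ so that $f(y)$ automatically descends to $k$, and then to invoke additive Hilbert~90 with values in the kernel $\ma F_{p^n}=\G_f$ to turn a prescribed isomorphism $G\cong\ma F_{p^n}$ into an actual field element. The hypothesis $\ma F_{p^n}\subseteq k_0$ is exactly what makes $\ma F_{p^n}$ simultaneously $G$-fixed and contained in $\ker f$, which is what allows both of these maneuvers.
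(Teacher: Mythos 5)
Your proof is correct, but it takes a genuinely different route from the paper's treatment: the paper itself offers no proof of Proposition \ref{P3.1} (it is cited to Garcia--Stichtenoth), and instead proves the generalization to arbitrary additive polynomials in Section \ref{S5}, where the forward direction (Theorem \ref{T5.4}) is handled by an \emph{explicit} construction --- write $K=k(y_1,\ldots,y_n)$ with Artin--Schreier generators $y_i^p-y_i=\gamma_i$, pick a basis $\{\mu_1,\ldots,\mu_n\}$ of $\G_f$ over ${\ma F}_p$, and verify by direct computation that $y=\sum_{i=1}^n\mu_i y_i$ satisfies $f(y)=u\in k$ with $u=\sum_i h_i$ given by closed formulas in the $\gamma_i$ --- while you instead prescribe an isomorphism $\phi\colon G\to{\ma F}_{p^n}$, observe that triviality of the $G$-action on ${\ma F}_{p^n}\subseteq k$ makes $\phi$ a $1$-cocycle, and invoke additive Hilbert 90 ($H^1(G,K^+)=0$, via the normal basis theorem) to produce $y$ with $\sigma y-y=\phi(\sigma)$; the descent $a=f(y)\in k$ then follows since $\operatorname{im}\phi={\ma F}_{p^n}=\G_f=\ker f$. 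Your argument is shorter and more conceptual, and it even lets you prescribe the identification $G\cong{\ma F}_{p^n}$ in advance; what it does not deliver, and what the paper's explicit $y=\sum\mu_iy_i$ does, is a computable expression for $a$ in terms of the $\gamma_i$, which the paper exploits later (e.g., in Proposition \ref{P6.2}, where the reduced form of $u_0=f(y_0)$ for $y_0=\sum\mu_iy_i$ is read off from the reduced forms of the $\gamma_i$). Similarly, for the subfield description your trace-duality argument (hyperplanes of ${\ma F}_{p^n}$ as kernels of $\xi\mapsto\Tr(\mu\xi)$, with $E_\mu=E_{\mu'}$ iff $\mu'/\mu\in{\ma F}_p^{\ast}$) is the specialization to $f(X)=X^{p^n}-X$ of the paper's Theorem \ref{T5.3}, which parametrizes the degree-$p$ subfields by hyperplanes $\H<\G_f$ via the factorization $f=\wp_{f_{\H}(\varepsilon_{\H})}\circ f_{\H}$; both are complete, and your computation $\sigma(y_\mu)=y_\mu+\Tr(\mu\xi_\sigma)$ silently corrects the typo $(\mu p)$ in the statement, whose last summand should of course be $(\mu y)$.
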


\begin{observacion}\label{O3.3} {\rm{Garcia and Stichtenoth claim that 
in Proposition \ref{P3.1} the polynomial $X^{p^n}-X$ 
can be replaced by any monic separable
additive polynomial of degree $p^n$ whose roots belong to $k_0$
and that an analogous description can be given for elementary
abelian $p$--extensions. We will see this in Section \ref{S5}.
}}
\end{observacion}

\section{General facts on elementary abelian $p$--extensions}\label{S4}

For the time being we consider the special case of the polynomial $X^q-X$
with the notation and conventions of Section \ref{S2}. Let
$k/k_0$ be a function field, 
$G=\Gal(K/k)\cong C_p^n$, $K=k(y)$, $y^q-y=u\in k$. 

\begin{teorema}\label{T4.3} With the above notation, given a
prime divisor $\P$ of $k$, we have that there exists 
$y\in K$ such that $K=k(y)$, $y^q-y=u$ with $v_{\P}(u)\geq 0$
or $v_{\P}(u)=-\lambda p^m$ where $\lambda>0$, $\mcd(\lambda, p)=1$
and $0\leq m < n$.
If $v_{\P}(u)\geq 0$, $\P$ is not ramified in $K/k$.
If $v_{\P}(u)=-\lambda p^m$, then $p^{n-m}\mid e_{\P}$ where
$e_{\P}$ denotes the ramification index of $\P$ in $K/k$.
\end{teorema}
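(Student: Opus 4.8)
The plan is to prove all three assertions by localizing at $\P$, passing to the completion, and reducing everything to a computation with valuations in the local field. Throughout I use the freedom $y\mapsto y+c$ with $c\in k$: since $(y+c)^q-(y+c)=u+(c^q-c)$ and $k(y+c)=k(y)=K$, replacing $y$ by $y+c$ changes $u$ by an element of the form $c^q-c$, leaving the extension untouched.

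\textbf{Normalization of the generator.} The key observation is that for $c\in k$ with $v_{\P}(c)<0$ one has $v_{\P}(c^q-c)=q\,v_{\P}(c)=p^{n}v_{\P}(c)$, a multiple of $p^{n}$, while $v_{\P}(c^q-c)\ge 0$ when $v_{\P}(c)\ge 0$. Hence the only pole orders I can cancel are those divisible by $p^{n}=q$. Whenever $v_{\P}(u)=-N$ with $p^{n}\mid N$, I choose $c$ with $v_{\P}(c)=-N/q$ whose leading coefficient at $\P$ is a $q$-th root of that of $u$ --- available because $k_0$, hence the residue field at $\P$, is perfect --- so that $u-(c^q-c)$ has strictly smaller pole order. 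Iterating terminates, since the (positive) pole order strictly decreases at each step, and leaves either $v_{\P}(u)\ge 0$, or $v_{\P}(u)=-\lambda p^{m}$ with $\mcd(\lambda,p)=1$ and, the pole order no longer being divisible by $p^{n}$, with $0\le m<n$.

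\textbf{The unramified case.} Suppose $v_{\P}(u)\ge 0$, and work in the completion, writing $v_{K_{\P}}$ for the valuation of $K$ at a prime $\mathfrak P$ above $\P$, normalized so that $v_{K_{\P}}|_{k}=e_{\P}v_{\P}$. If $v_{K_{\P}}(y)$ were negative, then $v_{K_{\P}}(y^{q}-y)=q\,v_{K_{\P}}(y)<0$, contradicting $v_{K_{\P}}(u)=e_{\P}v_{\P}(u)\ge 0$; so $y$ is integral at $\P$. Its reduction satisfies $\bar y^{q}-\bar y=\bar u$ over the residue field, a separable equation since the derivative of $X^{q}-X-\bar u$ is $-1$. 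By Kummer--Dedekind the reduced polynomial factors into distinct irreducibles, so $\mathcal O_{\P}[y]$ is \'etale over $\mathcal O_{\P}$ and $\P$ is unramified, i.e.\ $e_{\P}=1$.

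\textbf{The ramified case.} This is the heart of the matter, and it reduces to a one-line valuation count. Assume $v_{\P}(u)=-\lambda p^{m}=-N$ with $\mcd(\lambda,p)=1$, and keep $v_{K_{\P}}$ as above. Since $u$ has a pole, so does $y$, whence $w:=v_{K_{\P}}(y)<0$ and $v_{K_{\P}}(y^{q}-y)=q\,w$; therefore
\[
q\,w=v_{K_{\P}}(y^{q}-y)=v_{K_{\P}}(u)=e_{\P}\,v_{\P}(u)=-e_{\P}N.
\]
Thus $p^{n}w=-e_{\P}\lambda p^{m}$, so $p^{n}\mid e_{\P}\lambda p^{m}$, i.e.\ $p^{n-m}\mid e_{\P}\lambda$; as $\mcd(\lambda,p)=1$ this forces $p^{n-m}\mid e_{\P}$, as claimed.

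\textbf{Main obstacle.} The ramification bound is immediate once the generator is in normal form, so the genuine work lies in the normalization step. There I must guarantee that a pole whose order is divisible by $q$ can always be lowered, which rests on extracting $q$-th roots in the residue field (hence on the perfectness of $k_0$) and on verifying that the successive cancellations terminate rather than merely displace the pole to a lower but still $q$-divisible order. Checking that the resulting $u$ still generates $K$ over $k$, and that the argument is uniform over the chosen prime $\P$, is where I expect the details to demand the most care.
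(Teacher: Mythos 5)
Your proposal is correct and follows essentially the same route as the paper, which proves Theorem 4.3 via the general Theorem 5.1: the same normalization of the pole order by subtracting $c^q-c$ with a residue-field $q$-th root lift (using perfectness of $k_0$) and strictly decreasing pole order, and the same valuation count $q\,v_{\eu p}(y)=e_{\P}v_{\P}(u)=-e_{\P}\lambda p^m$ giving $p^{n-m}\mid e_{\P}$. Your unramified case phrased via the separable reduction of $X^q-X-\bar u$ is just a repackaging of the paper's different argument $\eu D_{\o_{\eu p}/\o_{\P}}\mid\langle f'(y)\rangle=\{1\}$, since $f'=-1$ is a unit.
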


\begin{proof}
See the proof of Theorem \ref {T5.1}.
\end{proof}

\begin{observacion}\label{O4.1} {\rm{The number $m$ 
given in Theorem \ref{T4.3} is not unique.
}}
\end{observacion}

\begin{ejemplo}\label{Ej4.2}{\rm{
Let $k={\ma F}_q(T)$ with ${\ma F}_{p^2}\subseteq \F$ and let
$K=k(y)$ where $y^{p^2}-y =u=T^{\lambda p}$ with
$\lambda\in{\ma N}$, $\mcd(\lambda,p)=1$. 
Then $v_{\p}(T)=-\lambda p$ and in this case $m=1$, $n=2$.

Set $z:=y^p-T^{\lambda}$. Then
\begin{align*}
z^{p^2}-z&=(y^p)^{p^2}-(T^{\lambda})^{p^2}-y^p+T^{\lambda}=
(y^{p^2}-y)^p-T^{\lambda p^2}+T^{\lambda}\\
&=(T^{\lambda p})^p-
T^{\lambda p^2}+T^{\lambda}=T^{\lambda}=\nu,
\end{align*}
and in this case $v_{\p}(\nu)=-\lambda$, $m=0$ and $n=2$.

Note that necessarily $k(z)=k(y)=K$ since $e_{\p}=
p^2=[K:k]$. More generally, this is an immediate consequence
of Theorem \ref{T7.1}.
}}
\end{ejemplo}

\begin{definicion}\label{D4.4}{\rm{
When a prime $\P$ satisfies the conditions of Theorem \ref{T4.3}
with respect to the equation $f(y)=u$, we say that is in a
{\em normal form with respect to $\P$}.  From Remark \ref{O4.1} we have that
a normal form is not unique.
}}
\end{definicion}

\begin{observacion}\label{O4.5}{\rm{
It does not hold necessarily that
$e_{\P}=p^{n-m}$, see Example \ref{Ej4.2}.
In that example we have, with $m=0$, $p^2\mid e_{\P}$ and
therefore $e_{\P}=p^2$. For $m=1$, $p^{2-1}=p\mid e_{\P}$ but $e_{\P}
\neq p$. Furthermore, even in case that $m$ be the minimum with the
previous properties or even in case that $m$ is unique, it does not
necessarily follow that $e_{\P}=p^{n-m}$. However, if $m=0$, that is,
if $v_{\P}=-\lambda$, then $p^n\mid e_{\P}$, thus $e_{\P}=
p^n$ and the prime is fully ramified.
}}
\end{observacion}

\begin{ejemplo}\label{Ej4.6}{\rm{Let $k=k_0(T)$ with ${\ma F}_{p^2}
\subseteq k_0$. Let
$K=k(y_1,y_2)$ with $y_1^p-y_1=T$, $y_2^p-y_2=T^2$ if
$p>2$ and $y_2^p-y_2=T^3$ if $p=2$. Then if $\mu\in {\ma F}_{
p^2}\setminus \ma F_p$, we consider $y=y_1+\mu y_2$, and so
we have $K=k(y)$ and
\begin{align*}
y^{p^2}-y&=y_1^{p^2}+\mu^{p^2}y_2^{p^2}-y_1-\mu y_2=
(y_1^{p^2}-y_1)+\mu(y_2^{p^2}-y_2)\\
&=[(y_1^p-y_1)^p+(y_1^p-y_1)]+\mu[(y_2^p-y_2)^p+(y_2^p-y_2)]\\
&=T^p+T+\mu T^{2p}+\mu T^2=T^p(1+\mu T^p)+(T+\mu T^2)=\gamma,
\end{align*}
$v_{\p}(\gamma)=-2p$.

The other intermediate extensions $K/k$ of degree $p$ are
given by $k(y_1+\xi y_2)$, $1\leq \xi \leq p-1$ and they satisfy
\[
(y_1+\xi y_2)^p-(y_1+\xi y_2)=(y_1^p-y_1)+\xi (y_2^p-y_2)=T+\xi T^2,
\]
$v_{\p}(T+\xi T^2)=-2$. In particular $\p$ is totally ramified, 
$e_{\p}=p^2$, $n=2$, $m=1$, $n-m=1<2$.

Note that there is no way to obtain $m=0$ with any change of
variable because otherwise we would have $K=k(z)$ such that $z^{p^2}-
z=\nu\in k$ and $v_{\p}(\nu)=-\lambda$ with $\mcd(\lambda, p)=1$, 
and for any intermediate extension we would have
\[
z_{\mu}^p-z_{\mu}=\mu\nu,\quad \nu\in{\ma F}_{p^2}^{\ast}
\quad\text{and}\quad v_{\p}(\mu\nu)=-\lambda.
\]
Therefore the different exponent of ${\eu p}_{\infty}$, where
${\eu p}_{\infty}$ is a prime above $\p$, would be $(\lambda+1)(p-1)$
and $\lambda +1$ would be the unique ramification number for all
the intermediate extensions. However, in the subextensions
$k(y_1)/k$ and $k(y_2)/k$ the ramification numbers are different,
namely, $1+1=2$ and $2+1=3$ (or $3+1=4$ in case $p=2$).

Therefore $m=1$ is unique, $p^{n-m}=p^{2-1}=p\mid e_{\p}$ but
$e_{\p}= p^2\neq p$.
}}
\end{ejemplo}

\begin{ejemplo}\label{Ej4.7}{\rm{
Let $y_1^p-y_1=T$, $y_2^p-y_2=\frac{1}{T}$ and, $K=
k(y_1,y_2)$. Then $\p$ is not totally ramified since it is
unramified in $k(y_2)/k$ and in this case $e_{\p}=p$, $n=2$, $m=1$
and $p^{n-m}=p=e_{\p}$.
}}
\end{ejemplo}

\section{Additive polynomials}\label{S5}

With the notation and conventions of Section \ref{S2}, we consider
$f(X)\in k[X]$ a monic separable additive polynomial of degree $p^n$:
\begin{equation}\label{E2}
f(X)=X^{p^n}+a_{n-1}X^{p^{n-1}}+\cdots+a_2X^{p^2}+a_1 X^p +a_0 X
\in k[X],\quad a_0\neq 0.
\end{equation}

From Proposition \ref{P2.2} we know that $\G_f$ is an additive group
isomorphic to $C_p^n$ with $\G_f\subseteq \bar{k}$, that is, $\G_f
\subseteq (\bar{k},+)$.

As always, we will assume that $\G_f\subseteq k$. Let $K=k(y)$ 
with $f(y)=u\in k$. We are assuming that $F(X)=f(X)-u\in k[X]$ 
is irreducible. Thus, from Proposition \ref{P2.2} 
we have that $G=\Gal(K/k)\cong \G_f$.

Therefore, from Proposition \ref{P2.2}, with $F(X)$ irreducible, 
we obtain that
if the set $\{\varepsilon_1,\ldots,\varepsilon_n\}$
is contained in $\G_f$ then
$\{\varepsilon_1,\ldots,\varepsilon_n\}$ is a 
basis of $\G_f$ over ${\ma F}_p$ if and only if
$G=\langle \sigma_{\varepsilon_1},\ldots,\sigma_{\varepsilon_n}\rangle$.

Now, $G$ has $\frac{p^n-1}{p-1}$ subgroups of index $p$, that is,
$K/k$ has $\frac{p^n-1}{p-1}$ subextensions of degree $p$ over $k$.
We will study in more detail these subextensions.

If we denote by $z$ the elements such that $k\subseteq k(z)\subseteq
K$ with $[k(z):k]=p$, then $k(z)$ is the fixed field of a subgroup $H$
of $G$ of index $p$: $k(z)=K^H$. In this case, if $G=H\oplus
{\ma F}_p \sigma_z$, then $\Gal(k(z)/k)\cong \langle\sigma_z
\rangle$ with $\sigma_z (z)=z+1$.

In this way we have that if $E=k(z_1,\ldots,z_n)$, then
$E=K$ if and only if $G=\langle\sigma_{z_1},\ldots,\sigma_{z_n}
\rangle$. Now, let $K=k(z_1,\ldots,z_n)$ and denote $\Gal(k(z_i)/k)=
\langle\sigma_i\rangle$ and $G\cong \langle\sigma_1,\ldots,\sigma_n\rangle$.
Set $z:=\alpha_1 z_1+\cdots+\alpha_n z_n$ with $\alpha_1,\ldots,
\alpha_n\in{\ma F}_p$ not all zero. Then if $\wp(z_i)=z_i^p-z_i=
\gamma_i$, we have
\[
z^p-z=\wp(z)=\wp\Big(\sum_{i=1}^n\alpha_iz_i\Big)=\sum_{i=1}^n
\alpha_i\wp(z_i)=\sum_{i=1}^n\alpha_i\gamma_i.
\]

Note that $\{\gamma_1,\ldots,\gamma_n\}\subseteq k$ is linearly
independent over ${\ma F}_p$ since otherwise, if $\sum_{i=1}^n\alpha_i\gamma_i
=0$ with some $\alpha_{i_0}\neq 0$, then $\gamma_{i_0}=
\sum_{i\neq i_0}\alpha_{i_0}^{-1}\alpha_i\gamma_i$ so that
\begin{gather*}
\wp(z_{i_0})=z_{i_0}^p-z_{i_0}=\gamma_{i_0}=\sum_{i\neq i_0}
\alpha_{i_0}^{-1}\alpha_i\gamma_i=\sum_{i\neq i_0}\alpha_{i_0}^{-1}\alpha_i
(z_i^p-z_i)=\wp\big(\sum_{i\neq i_0}\alpha_{i_0}^{-1}\alpha_iz_i\big),
\intertext{which implies that}
\wp\big(z_{i_0}-\sum_{i\neq i_0}\alpha_{i_0}^{-1}\alpha_iz_i\big)=0.
\intertext{Hence $z_{i_0}-\sum_{i\neq i_0}\alpha_{i_0}^{-1}\alpha_iz_i=\beta
\in{\ma F}_p$. It follows that}
z_{i_0}\in k(z_1,\ldots,z_{i_0-1},
z_{i_0+1},\ldots,z_n)\quad \text{and that}\quad [K:k]\leq p^{n-1},
\end{gather*}
which is absurd. In this way we have that 
$\{\gamma_1,\ldots,\gamma_n\}\subseteq k$ is a
set linearly independent over ${\ma F}_p$.

Coming back to the expression $z=\sum_{i=1}^n\alpha_i z_i$, we have
$\wp(z)=\sum_{i=1}^n\alpha_i\gamma_i=\gamma$. In case 
$\gamma\in \wp(k)$, say $\gamma=\wp(A)$ with $A\in k$, 
we would have
$\wp(z-A)=\wp\big(\sum_{i=1}^n \alpha_iz_i-A\big)=0$. Therefore
$\sum_{i=1}^n\alpha_iz_i-A=\beta\in {\ma F}_p$. Because
$\alpha_{i_0}\neq 0$, it would follow that
\[
z_{i_0}=-\sum_{i\neq i_0}\alpha_{i_0}^{-1}\alpha_iz_i+\alpha_{i_0}^{-1}
\beta+\alpha_{i_0}A\in k(z_1,\ldots,z_{i_0-1},z_{i_0+1},\ldots,z_n),
\]
which is absurd. Whence $\gamma\notin \wp(k)$ and $[k(z):k]=p$.

With this procedure we obtain $p^n-1$ extensions of degree $p$.
Now, if $k(z)=k(w)$ with $z=\sum_{i=1}^n\alpha_iz_i$ and
$w=\sum_{i=1}^n\beta_iz_i$, $\alpha_i,\beta_i\in{\ma F}_p$, it
follows that $z=jw+c$ (see Proposition \ref{P7.0})
with $j\in{\ma F}_p^{\ast}$ and $c\in k$. Therefore
$c=0$ and $z=jw$. Hence we obtain $\frac{p^n-1}{p-1}$
different extensions of degree $p$, hence all of them.
In brief, we have obtained

\begin{proposicion}\label{P5.0} If
$K=k(z_1,\ldots,z_n)/k$ is an elementary abelian $p$--extension
of degree $p^n$ and $[k(z_i):k]=p$, $1\leq i\leq n$, then
all the subextensions of degree $p$ over $k$ are given by
$k(z)$ where $z=\alpha_1 z_1+\cdots+\alpha_n z_n$ with $\alpha_1,
\ldots,\alpha_n\in{\ma F}_p$ not all zero. \fin
\end{proposicion}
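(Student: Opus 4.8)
The plan is to exploit the Artin--Schreier description of the degree--$p$ subextensions together with a counting argument, so that surjectivity of the construction comes for free once its fibers are understood. First I would record that, since $[k(z_i):k]=p$ and $\operatorname{char} k=p$, each $z_i$ satisfies an Artin--Schreier equation $\wp(z_i)=z_i^p-z_i=\gamma_i\in k$, and that the hypothesis $[K:k]=p^n$ is equivalent to the statement that the classes of $\gamma_1,\ldots,\gamma_n$ are linearly independent in the ${\ma F}_p$--vector space $k/\wp(k)$; this is the standard Artin--Schreier dictionary, and it is the single fact that drives everything below.

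Next I would verify that the construction produces degree--$p$ subextensions. For a nonzero tuple $(\alpha_1,\ldots,\alpha_n)\in{\ma F}_p^n$ put $z=\sum_{i=1}^n\alpha_iz_i$; then $\wp(z)=\sum_{i=1}^n\alpha_i\gamma_i$, which by the independence of the $\gamma_i$ modulo $\wp(k)$ does not lie in $\wp(k)$, so $z\notin k$ and $[k(z):k]=p$. Thus every nonzero coefficient vector yields a genuine degree--$p$ intermediate field.

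The main work is to determine exactly when two such vectors give the same field, and here is where I expect the real obstacle. Suppose $k(z)=k(w)=:E$ with $z=\sum\alpha_iz_i$ and $w=\sum\beta_iz_i$. Since $E/k$ is cyclic of degree $p$ with group $\langle\tau\rangle$, I may normalize $\tau$ by $\tau(z)=z+1$; then $\tau(w)=w+b$ for some $b\in{\ma F}_p^{\ast}$, so $w-bz$ is $\tau$--fixed and hence lies in $k$. Writing $w-bz=\sum_i(\beta_i-b\alpha_i)z_i\in k$ and applying $\wp$ gives $\sum_i(\beta_i-b\alpha_i)\gamma_i\in\wp(k)$; the independence of the $\gamma_i$ modulo $\wp(k)$ then forces $\beta_i=b\alpha_i$ for every $i$, i.e.\ the two tuples are proportional (this is exactly the content of Proposition~\ref{P7.0}, which I would either cite or reprove in this one line). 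The delicate point is precisely this passage from ``same field'' to ``proportional generators'': it is what prevents distinct projective classes from collapsing to the same subfield.

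Finally I would close with the count. The $p^n-1$ nonzero coefficient vectors fall into proportionality classes of size $p-1$ under scaling by ${\ma F}_p^{\ast}$, and by the previous paragraph each class determines one field while distinct classes determine distinct fields, giving exactly $\tfrac{p^n-1}{p-1}$ distinct degree--$p$ subfields. On the other hand $G=\Gal(K/k)\cong C_p^n$ has exactly $\tfrac{p^n-1}{p-1}$ subgroups of index $p$, hence exactly that many degree--$p$ subextensions. Since the fields produced by the construction form a subset of the degree--$p$ subextensions having the full cardinality, they must be all of them, which is the assertion.
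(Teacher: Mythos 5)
Your proof is correct and follows essentially the same route as the paper: show each nonzero ${\ma F}_p$--combination $z=\sum_i\alpha_iz_i$ has $\wp(z)\notin\wp(k)$ and hence degree $p$, show that $k(z)=k(w)$ forces the coefficient tuples to be proportional (the paper cites Proposition~\ref{P7.0} here, where you reprove it in one line via the Galois action $\tau(z)=z+1$, $\tau(w)=w+b$), and then count the resulting $\frac{p^n-1}{p-1}$ fields against the index-$p$ subgroups of $C_p^n$. Your packaging of the key input as linear independence of the $\gamma_i$ in $k/\wp(k)$ is a harmless reformulation of the paper's direct degree argument, so no gap.
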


Now consider $k/k_0$ a function field with $k_0$ a perfect field,
$f(X)\in k_0[X]$ and $\G_f\subseteq k_0$. Let
$f(X)$ be given by (\ref{E2}). Let $K=k(y)$ with $f(y)=u\in k$. Let $\P$
be a place of  $k$. We have the same result as
in Theorem \ref{T4.3}, namely:

\begin{teorema}\label{T5.1} We can choose $u\in k$ such that,
either $v_{\P}(u)\geq 0$ or $v_{\P}(u)=-\lambda p^m$ with 
$\lambda\in {\ma N}$, $\gcd(\lambda,p)=1$ and $0\leq m< n$. 
In the first case $\P$ is unramified in $K/k$ and in the second $\P$
is ramified and $p^{n-m}\mid e_{\P}$.
\end{teorema}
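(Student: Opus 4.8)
The plan is to treat the three assertions in turn. Throughout I fix a uniformizer $t$ at $\P$ and write $\kappa$ for the residue field, which is a finite extension of the perfect field $k_0$ and hence perfect; I also use repeatedly that replacing $y$ by $y+g$ with $g\in k$ leaves $K=k(y)$ unchanged while replacing $u=f(y)$ by $u+f(g)$ (as $f$ is additive), so that $u$ may be adjusted modulo $f(k)$. To produce the normal form, suppose $v_{\P}(u)=-N<0$ and write $N=\lambda p^{m}$ with $\gcd(\lambda,p)=1$. If $m\geq n$, then $N/p^{n}=\lambda p^{m-n}$ is a positive integer; writing $u=ct^{-N}+(\text{higher order})$ with $v_{\P}(c)=0$, I pick $b\in\mathcal O_{\P}$ whose residue is a $p^{n}$-th root of $\bar c$ in $\kappa$ (possible by perfectness) and set $g=bt^{-N/p^{n}}$. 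Since $f(X)=X^{p^{n}}+\sum_{i<n}a_iX^{p^{i}}$ has constant, hence $\P$-unit, coefficients, the terms $a_ig^{p^{i}}$ $(i<n)$ have valuation $-\lambda p^{m-n+i}>-N$, so the leading term of $f(g)$ is $g^{p^{n}}$, matching the leading valuation and residue of $ct^{-N}$; thus $v_{\P}(u-f(g))>-N$. Replacing $u$ by $u-f(g)$ strictly raises $v_{\P}(u)$, so after finitely many steps either $v_{\P}(u)\geq0$ or the $p$-part of $-v_{\P}(u)$ equals $p^{m}$ with $m<n$, which is the asserted normal form.

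For the unramified case, suppose $v_{\P}(u)\geq0$. Then $F(X)=f(X)-u$ is monic with coefficients in $\mathcal O_{\P}$, and its reduction $\bar F(X)=\bar f(X)-\bar u\in\kappa[X]$ has derivative $\bar a_0=a_0\neq0$ (recall $a_0\in k_0^{\ast}$), so $\bar F$ is separable. By the Kummer--Dedekind description of the primes above $\P$, a squarefree reduction forces $\P$ to be unramified, i.e.\ $e_{\P}=1$.

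The core of the theorem is the ramified case, and the main obstacle is to convert the single additive equation into a tower of twisted Artin--Schreier steps on which valuations telescope. Fix an $\ma F_p$-basis $\xi_1,\ldots,\xi_n$ of $\G_f$, put $V_j=\L\{\xi_1,\ldots,\xi_j\}$ and $f_j(X)=f_{V_j}(X)=\prod_{\delta\in V_j}(X-\delta)$, so that $f_0(X)=X$ and $f_n=f$. With $\eta_j=f_{j-1}(\xi_j)\neq0$ one has $\ker\wp_{\eta_j}=\ma F_p\eta_j$, so $\wp_{\eta_j}\circ f_{j-1}$ has kernel $f_{j-1}^{-1}(\ma F_p\eta_j)=V_j$; being monic, separable and additive of degree $p^{j}$, it must coincide with $f_j$. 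Hence $f=\wp_{\eta_n}\circ\cdots\circ\wp_{\eta_1}$, with each $\eta_j\in k_0$ a $\P$-unit. Setting $v_j=f_j(y)$ gives $v_0=y$, $v_n=u$ and the recursions $v_{j-1}^{p}-\eta_j^{p-1}v_{j-1}=v_j$.

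It then remains to compute at a prime $\mathfrak P$ of $K$ above $\P$. From $v_{\mathfrak P}(u)=e_{\P}v_{\P}(u)=-e_{\P}\lambda p^{m}<0$ and the recursions, a downward induction gives $v_{\mathfrak P}(v_j)<0$ for all $j$: if $v_{\mathfrak P}(v_j)<0$ then $v_{\mathfrak P}(v_{j-1})<0$, and since $\eta_j$ is a unit the two summands of $v_{j-1}^{p}-\eta_j^{p-1}v_{j-1}$ then have distinct valuations, forcing $v_{\mathfrak P}(v_j)=p\,v_{\mathfrak P}(v_{j-1})$. Telescoping through $j=n,\ldots,1$ yields $-e_{\P}\lambda p^{m}=v_{\mathfrak P}(u)=p^{n}v_{\mathfrak P}(y)$, whence $v_{\mathfrak P}(y)=-e_{\P}\lambda p^{m-n}$. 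As $v_{\mathfrak P}(y)\in\ma Z$ and $\gcd(\lambda,p)=1$, this forces $p^{n-m}\mid e_{\P}$, and in particular $e_{\P}>1$, so $\P$ is ramified. I expect the only delicate points to be the verification that $f$ decomposes as the stated composition and the termination bookkeeping in the normal-form reduction; once the tower is in place, the divisibility reduces to the single telescoping valuation count above.
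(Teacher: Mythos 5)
Your proposal is correct, and two of its three parts coincide with the paper's own argument while the third takes a genuinely different route. Your normal-form reduction is the paper's proof almost verbatim: the paper writes $u$ in the completion $k(\P)((\pi))$, chooses $C\in\o_{\P}$ whose residue is a $p^n$-th root of the leading coefficient, and substitutes $z=y-C\pi^{-\lambda_1}$ with $\lambda p^m=\lambda_1p^n$, which is exactly your $g=bt^{-N/p^{n}}$; the termination bookkeeping is the same strictly increasing integer valuation. Your unramified case is the same fact in different clothing: the paper notes $h(X)=\Irr(X,y,k)$ divides $f(X)-u$, so $h'(y)\mid f'(y)=a_0\in k_0^{\ast}$ and the local different is trivial, while you invoke Kummer--Dedekind via the separable reduction of $F$ (equivalently, the discriminant of $F$ is $\pm a_0^{p^n}$, a unit); both pivot on $F'(X)=a_0$. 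The genuine divergence is the ramified case. The paper argues in one step: for $\eu p$ above $\P$ one has $v_{\eu p}(y)<0$, and since $v_{\eu p}(a_i)\geq 0$ the top term $y^{p^n}$ dominates, giving $v_{\eu p}(u)=v_{\eu p}(f(y))=p^nv_{\eu p}(y)$ and hence $p^{n-m}\mid e_{\P}$ at once. You instead factor $f=\wp_{\eta_n}\circ\cdots\circ\wp_{\eta_1}$ along a flag $V_1\subset\cdots\subset V_n$ of $\G_f$ and telescope $v_{\eu p}(v_j)=p\,v_{\eu p}(v_{j-1})$ down the tower of elements $v_j=f_{V_j}(y)$; this factorization is sound (your kernel count $\ker(\wp_{\eta_j}\circ f_{j-1})=V_j$ is exactly right, and it is the iterated form of the identity $f=\wp_{f_{\H}(\varepsilon_{\H})}\circ f_{\H}$ that the paper proves later in Theorem~\ref{T5.3}), and it realizes, by valuations alone, the alternative strategy via degree-$p$ layers that the paper itself flags at the start of its proof. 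What the paper's route buys is economy and marginally more generality: it uses only $v_{\P}(a_i)\geq 0$, whereas your tower needs the $\eta_j$ to be $\P$-units, which you secure from $\G_f\subseteq k_0$, valid in the theorem's setting but an extra hypothesis in spirit. What your route buys is the explicit twisted Artin--Schreier tower, reusable for the subextension analysis of Theorem~\ref{T5.3} and for finer ramification data, at the price of machinery the bare divisibility $p^{n-m}\mid e_{\P}$ does not require.
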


\begin{proof}
Later on (Theorem \ref{T5.3}) we will see how to obtain
all the degree $p$ subextensions. Once we have these
subextensions we may use them to determine the
decomposition type of $\P$. Here we present another proof
in the spirit of Hasse's for Artin--Schreier extensions \cite{Has34}.

(1) If $v_{\P}(u)\geq 0$, then $u\in \o_{\P}$. With $f(y)=u$, if we define
$h(X)=\Irr(X,y,k)$, we obtain that
$h(X)\mid f(X)-u$. Therefore $f(X)-u=h(X)l(X)$ and
$f'(X)=h'(X)l(X)+h(X)l'(X)$. It follows that $f'(y)=h'(y)l(y)+0$.
In this way we obtain $h'(y)\mid f'(y)$.

Now, if $y\in\o_{\eu p}$ where $\eu p$ is a prime divisor of $K$ over
$\P$, $f'(y)=a_0\neq 0$, $v_{\P}(a_0)=0$ since $a_0\in k_0^{\ast}$.
Therefore, the local different satisfies $\eu D_{\o_{\eu p}/\o_{\P}}\mid
\langle f'(y)\rangle=\{1\}$. Hence $\eu p\nmid \eu D_{K/k}$ and
$\eu p$ is unramified.

(2) If $v_{\P}(u)<0$, let $v_{\P}(u)=-\lambda p^m$. If $m<n$, $u$
satisfies the conditions of the theorem. If $m\geq n$, we set
$\lambda p^m=\lambda_1 p^n$,
$u\in k\subseteq k_{\P}$ and $\pi$ a prime element for $\P$,
$v_{\P}(\pi)=1$, $\pi\in k$. 
Write $u$ in the form
\begin{equation}\label{Eq4.1}
u=\frac{b_{-\lambda p^m}}{\pi^{\lambda p^m}}+
\frac{b_{-\lambda p^m+1}}{\pi^{\lambda p^m-1}}+\cdots+
\frac{b_{-1}}{\pi}+b_0+b_1\pi+\cdots \in k_{\P}\cong k(\P)((\pi)).
\end{equation}

There exists $c\in k(\P)$ such that $c^{p^n}=
b_{-\lambda_1 p^n}$. Let $C\in\o_{\P}$ where $k(\P)=\o_{\P}/\P$,
be such that $c=C\bmod \P\in k(\P)$. Set $z=y-C\pi^{-\lambda_1}$. 
Then $k(z)=k(y)=K$ and
\begin{align*}
f(z)&=f(y)-f(C\pi^{-\lambda_1})\\
&=u-(C^{p^n}\pi^{-\lambda_1 p^n}+a_{n-1}
C^{p^{n-1}}\pi^{-\lambda_1 p^{n-1}}+\cdots+a_1C^p\pi^{-\lambda_1 p}+
a_0C \pi^{-\lambda_1})\\
&=\frac{b_{-\lambda p^m}}{\pi^{\lambda p^m}}+
\frac{b_{-\lambda p^m+1}}{\pi^{\lambda p^m-1}}+\cdots+
\frac{b_{-1}}{\pi}+b_0+b_1\pi+\cdots \\
&\hspace{1cm}-\Big(\frac{b_{-\lambda p^m}}{\pi^{\lambda p^m}}+
\frac{d_{-\lambda p^m+1}}{\pi^{\lambda p^m-1}}+\cdots+
\frac{d_{-\lambda_1}}{\pi^{\lambda_1}}\Big)\\
&=\sum_{i\geq -\lambda p^m+1}\gamma_i\pi^i,\quad \gamma_i\in k(\P).
\end{align*}

Therefore $v_{\P}(u-f(C\pi^{-\lambda p^m}))\geq -\lambda p^m+1>-\lambda p^m$.
In brief, if $v_{\P}(u)=-\lambda p^m$ with $m\geq n$, there exists $\delta \in k$
such that if $z:=y-\delta$, then $k(z)=k(y)$ and $v_{\P}(u-f(\delta))>-\lambda p^m$.

With this procedure we can reduce to an equation
$f(y)=u$ with $v_{\P}(u)\geq 0$ or $v_{\P}(u)=
-\lambda p^m$, $\mcd(\lambda,p)=1$,
$\lambda>0$ and $0\leq m< n$. This finishes the process.

If $u$ is of this last form, we will see that for every
$\delta\in k$, $v_{\P}(u-f(\delta))
\leq v_{\P}(u)$, that is, the value of $v_{\P}(u)$ is the maximum possible
with substitutions of the type $z=y-\delta$ with $\delta\in k$.
We have $f(\delta)=\delta^{p^n}+a_{n-1}\delta^{p^{n-1}}
+\cdots+a_1\delta^p+a_0 \delta$.

In case $v_{\P}(\delta)\geq 0$, $v_{\P}(f(\delta))\geq 0$ and since
$v_{\P}(u)<0$, it follows that
\[
v_{\P}(u-f(\delta))=\min\{v_{\P}(u),v_{\P}(f(\delta))\}=v_{\P}(u).
\]

In case $v_{\P}(\delta)<0$, $v(\delta^{p^n})=p^n v_{\P}(\delta)<p^iv_{\P}(\delta)
\leq v_{\P}(a_i)+p^i v_{\P}(\delta)=v_{\P}(a_i\delta^{p^i})$. Therefore
$v_{\P}(f(\delta))=p^nv_{\P}(\delta)=v_{\P}(\delta^{p^n})$.
Now $v_{\P}(f(\delta))\equiv 0\bmod p^n$ and $v_{\P}(u)\not\equiv
0\bmod p^n$. Thus $v_{\P}(f(\delta))\neq v_{\P}(u)$ and
$v_{\P}(u-f(\delta))=\min\{v_{\P}(u),v_{\P}(f(\delta))\}\leq v_{\P}(u)$.

Let $\eu p$ be a place of $K$ above $\P$. Then $v_{\eu p}(y)<0$
and hence $v_{\eu p}(f(y))=p^nv_{\eu p}(y)=v_{\eu p}(y^{p^n})$.
Therefore $v_{\eu p}(f(y))=p^nv_{\eu p}(y)=v_{\eu p}(u)=e_{\P}
v_{\P}(u)=-e_{\P}\lambda p^m$ and it follows that $p^{n-m}
\mid e_{\P}$ since $\mcd(\lambda,p)=1$.
\end{proof}

When $u$ is written with respect to a prime divisor $\P$ as in
Theorem \ref{T5.1}, we say that $u$ is in a
{\em normal form with respect to $\P$}. A normal form
is not unique in general.

In case $k=k_0(T)$ is a rational function field and
$\G_f\subseteq k_0$ we have:

\begin{teorema}\label{T5.2}
Let $k=k_0(T)$ be a rational function field, $f(X)\in k_0[X]$
an additive polynomial given by {\rm{(\ref{E2})}} and $K=k(y)$
an elementary abelian $p$--extension where $f(y)=u\in k$ and
$F(X)=f(X)-u\in k[X]$ is irreducible of degree $p^n$. Then we can 
choose $u$ satisfying
\begin{equation}\label{Eq5.2}
u=\sum_{i=1}^r \frac{Q_i(T)}{P_i(T)^{\alpha_i}}+ R(T)
\end{equation}
where $P_1,\ldots, P_r$ are distinct monic irreducible polynomials,
$Q_1,\ldots, Q_r\in k_0[T]$ such that $\mcd(Q_i,P_i)=1$,
$\deg Q_i<\deg P_i^{\alpha_i}$, $\alpha_i=-\lambda_i p^{m_i}>0$ with
$0\leq m_i< n$ and $\mcd(\lambda_i,p)=1$ for $1\leq i\leq r$ and
$R(T)$ is a polynomial such that if $R(T)\notin k_0$ then
$\deg R(T)=\lambda_0 p^m>0$ with $\mcd(\lambda_0, p)=1$,
$0\leq m<n$ and if $R(T)\in k_0$ then either $R(T)=0$ or
$R(T)\notin f(k_0)=\{f(\delta)\mid \delta\in k_0\}$.

Furthermore $P_1,\ldots,P_r$ are precisely the finite prime divisors
of $k$ ramified in $K$ and $\p$ is ramified if and only if $R(T)
\notin k_0$.
\end{teorema}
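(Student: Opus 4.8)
The plan is to exploit the freedom, already used in the proof of Theorem \ref{T5.1}, of replacing the generator $y$ by $z=y-\delta$ for $\delta\in k$: this keeps $k(z)=k(y)=K$ and replaces $u=f(y)$ by $u-f(\delta)$, so $F(X)$ remains irreducible of degree $p^n$ and the extension is unchanged. Thus I am free to modify $u$ modulo $f(k)$, and the whole problem becomes the choice of a good representative of the coset $u+f(k)$. I would begin by writing $u\in k_0(T)$ in its partial fraction decomposition
\[
u=\sum_i \frac{Q_i(T)}{P_i(T)^{a_i}}+R(T),
\]
with the $P_i$ distinct monic irreducibles, each $Q_i/P_i^{a_i}$ a proper fraction with pole only at $P_i$ (so $\mcd(Q_i,P_i)=1$ and $\deg Q_i<\deg P_i^{a_i}$), and $R(T)$ the polynomial part, whose only possible pole is $\p$. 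The aim is to normalize each summand separately.

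The crucial structural observation — and the step I expect to carry the argument — is that $f$ respects this decomposition. Since $f(X)=X^{p^n}+a_{n-1}X^{p^{n-1}}+\cdots+a_0X$ has all its coefficients in the constant field $k_0$, and since raising to the $p^j$-th power multiplies degrees and pole orders by $p^j$, one checks that: if $\delta=C/P^s$ is a proper fraction with pole only at a finite prime $P$, then $f(\delta)$ is again a proper fraction with pole only at $P$; and if $\delta$ is a polynomial, then $f(\delta)$ is a polynomial. Consequently, subtracting $f(\delta)$ for such a $\delta$ alters only the corresponding piece of the partial fraction decomposition and leaves all the others untouched. This is what makes the local normalizations independent and globally compatible.

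With this in hand I would run, for each $P_i$, exactly the reduction procedure of Theorem \ref{T5.1} applied to the single term $Q_i/P_i^{a_i}$, taking at each step the lift $C$ of the leading residue to be the canonical polynomial representative of degree $<\deg P_i$; the residue field $k_0[T]/(P_i)$ is a finite extension of the perfect field $k_0$, hence perfect, so the required $p^n$-th root exists, and $\delta=C/P_i^{\lambda_1}$ is a proper $P_i$-fraction, so by the observation above no other summand is disturbed. After finitely many steps the pole order at $P_i$ is brought either to $v_{P_i}(u)\geq 0$ (the term then disappears) or to $v_{P_i}(u)=-\alpha_i$ with $\alpha_i=\lambda_i p^{m_i}$, $\mcd(\lambda_i,p)=1$ and $0\leq m_i<n$. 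The same reduction at $\p$ (uniformizer $1/T$, residue field $k_0$) with polynomial $\delta$ normalizes $R(T)$: either $\deg R=\lambda_0 p^m$ with $\mcd(\lambda_0,p)=1$ and $0\leq m<n$, or $R\in k_0$, in which case a final subtraction of $f(c)$, $c\in k_0$, forces $R=0$ or $R\notin f(k_0)$.

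For the last assertion I would invoke Theorem \ref{T5.1} on the resulting normal form. Each surviving $P_i$ has $v_{P_i}(u)=-\lambda_i p^{m_i}<0$, so $p^{n-m_i}\mid e_{P_i}$ with $n-m_i\geq 1$, whence $P_i$ ramifies; conversely any finite prime $P\notin\{P_1,\dots,P_r\}$ satisfies $v_P(u)\geq 0$ (all fractions are proper and $R$ is a polynomial), hence is unramified. Finally, since every proper fraction has $v_{\p}>0$, we have $v_{\p}(u)=-\deg R$ when $R\notin k_0$ and $v_{\p}(u)\geq 0$ when $R\in k_0$; applying Theorem \ref{T5.1} at $\p$ shows that $\p$ ramifies precisely when $R\notin k_0$.
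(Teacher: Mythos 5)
Your proposal is correct and follows essentially the same route as the paper's proof: partial fraction decomposition, then the prime-by-prime reduction of Theorem \ref{T5.1} via substitutions $z=y-\delta$ (using perfectness of $k_0[T]/(P_i)$ to extract $p^n$-th roots of the leading residue, and likewise at $\p$ with polynomial $\delta$), with the ramification statement read off from Theorem \ref{T5.1}. Your explicit observation that $f$, having coefficients in $k_0$, sends proper $P_i$-fractions to proper $P_i$-fractions and polynomials to polynomials is exactly the fact the paper records implicitly through its displayed case analysis of $v_{\P}(w)$, so it is a welcome clarification rather than a different argument.
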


\begin{proof}
Let $f(y)=u=\frac{g(T)}{h(T)}$ with $\mcd(g(T),h(T))=1$. Expanding in
partial fractions, we obtain
\[
u=\sum_{i=1}^r\sum_{j=1}^{\beta_i}\frac{Q_j^{(i)}(T)}{P_i(T)^j} +R(T),
\]
where $\deg Q_j^{(i)}<\deg P_i^j$ for any 
$1\leq j\leq \beta_i$, $1\leq i\leq r$ and $R(T)\in k_0[T]$.

If $\beta_1=\lambda p^n>0$, we can choose $C\in k[T]$
such that 
\[
C(T)^{p^n}\equiv Q_1^{(\beta_1)}(T)\bmod P_1(T)
\]
because $k_0[T]/(P_1)$ is a perfect field.
Using the substitution $z=y-C^{\lambda}$ we get $K=k(z)$
and $f(z)=f(y)-f(C^{\lambda})=u-f(C^{\lambda})=w$. It follows that
the valuations of $w$ for an arbitrary prime divisor $\P\neq \p$ 
of $k_0$ satisfy:
\[
v_{\P}(w) \begin{cases} \geq 0&\text{if $v_{\P}(u)\geq 0$}\\
=-\beta_j& \text{if $\P$ is the prime divisor associated to $P_j(T)$ for $2\leq j\leq r$}\\
>-\beta_1&\text{if $\P$ is the prime divisor associated to $P_1(T)$}.
\end{cases}
\]
Repeating this process, we  obtain, for $\beta_j$, $2\leq j\leq r$,
that $K=k(y)$ with $f(y)=u$ and $u$ is in the form 
(\ref{Eq5.2}) except possibly for $R(T)$.

Now if $R(T)=b_dT^d+\cdots +d_0$ satisfies $d=\lambda p^n$
we make the substitution $y=z-cT^{\lambda}$ where $c^{p^n}=b_d$. 
Keeping on this process we finally obtain that either
$R(T)\in k_0$ or $\deg R(T)= \lambda p^m$ with $0\leq m< n$. 
Finally, if $R(T)\in k_0$ and $R(T)=f(\delta)$ for some 
$\delta\in k_0$, we take $z=y-\delta$.

The type of ramification is an immediate consequence
of Theorem \ref{T5.1}.
\end{proof}

\begin{definicion}\label{D5.2'}{\rm{When the 
equation $f(y)=u$ defining the extension
$K = k(y)$ satisfies the conditions of
Theorem \ref{T5.2}, we say that the equation
is in a {\em reduced form}.
}}
\end{definicion}

Note that the reduced form is not unique in general.

Next, we present the results mentioned by Garcia and 
Stichtenoth \cite{GarSti91} on additive polynomials whose
roots belong to the base field.

Let $K=k(y)$ with $f(y)=u\in k$, $f(X)\in k[X]$ a monic separable
additive polynomial whose roots are in $k$ and $F(X)=f(X)-u
\in k[X]$ is irreducible, $G=\Gal(K/k)\cong \G_f\cong C_p^n$.

\begin{teorema}\label{T5.3} The subextensions of degree $p$
over $k$ of $K/k$ are given by $k(z_{\H})$ with
\[
z_{\H}^p-z_{\H}=\frac{u}{f_{\H}(\varepsilon_{\H})^p},
\]
where $\H<\G_f$ is a subspace of $\G_f$ of codimension
$1$, $f_{\H}(X)=\prod_{\delta\in\H}(X-\delta)$ and $\G_f=
\H+{\ma F}_p\varepsilon_{\H}$, $\varepsilon_{\H}\in \G_f$. Furthermore,
$k(z_{\H})$ is the fixed field under $\H$: $k(z_{\H})=K^{\H}$
and $\Gal(k(z_{\H})/k)=\langle \sigma_{\varepsilon_{\H}}\rangle$ where
$\sigma_{\varepsilon_{\H}}(y)=y+\varepsilon_{\H}$.
We have $\sigma_{\varepsilon_{\H}}(z_{\H})=z_{\H}+1$.
\end{teorema}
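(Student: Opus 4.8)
The plan is to use the Galois correspondence to reduce to codimension-one subspaces $\H$ of $\G_f$, and then, for each such $\H$, to exhibit an explicit generator of the fixed field $K^{\H}$ by evaluating the additive polynomial $f_{\H}$ at $y$.

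First I would record the correspondence. By Proposition \ref{P2.2}, $G=\Gal(K/k)\cong\G_f$, so the subextensions $k\subseteq E\subseteq K$ with $[E:k]=p$ correspond bijectively to the index-$p$ subgroups of $G$, hence to the codimension-one ${\ma F}_p$--subspaces $\H$ of $\G_f$, of which there are $\frac{p^n-1}{p-1}$. Fix such an $\H$ and write $\G_f=\H+{\ma F}_p\varepsilon_{\H}$ with $\varepsilon_{\H}\notin\H$; then the corresponding subextension is $K^{\H}$ and $\Gal(K^{\H}/k)$ is generated by the restriction of $\sigma_{\varepsilon_{\H}}$, which sends $y\mapsto y+\varepsilon_{\H}$.

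Next comes the construction. Recall that $f_{\H}(X)=\prod_{\delta\in\H}(X-\delta)$ is an additive polynomial, since $\H$ is a finite $p$--subgroup of $k$. Put $w:=f_{\H}(y)$. For $\delta\in\H$ the automorphism $\sigma_{\delta}$ fixes $w$, because $\sigma_{\delta}(w)=f_{\H}(y+\delta)=f_{\H}(y)+f_{\H}(\delta)=w$, using additivity and $f_{\H}(\delta)=0$; hence $w\in K^{\H}$. To see that $w$ actually generates $K^{\H}$, I would apply $\sigma_{\varepsilon_{\H}}$: since $\varepsilon_{\H}\notin\H$ is not a root of $f_{\H}$, the element $c:=f_{\H}(\varepsilon_{\H})$ is nonzero, and $\sigma_{\varepsilon_{\H}}(w)=f_{\H}(y+\varepsilon_{\H})=w+c\neq w$. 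Thus $w\notin k$, and since $[K^{\H}:k]=p$ is prime we get $K^{\H}=k(w)$. Setting $z_{\H}:=w/c$ then yields $\sigma_{\varepsilon_{\H}}(z_{\H})=(w+c)/c=z_{\H}+1$, which is the asserted action, and $k(z_{\H})=k(w)=K^{\H}$.

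The heart of the argument---and the step I expect to cost the most care---is to identify $w^p-c^{p-1}w$ with $u$, which amounts to factoring $f$ through $f_{\H}$. Decomposing $\G_f$ into $\H$--cosets, $\G_f=\bigsqcup_{j=0}^{p-1}(\H+j\varepsilon_{\H})$, and using additivity of $f_{\H}$, I would write
\[
f(X)=\prod_{\delta\in\G_f}(X-\delta)=\prod_{j=0}^{p-1}f_{\H}(X-j\varepsilon_{\H})=\prod_{j=0}^{p-1}\big(f_{\H}(X)-jc\big).
\]
Since $\prod_{j\in{\ma F}_p}(Y-jc)=Y^p-c^{p-1}Y$, this gives $f(X)=f_{\H}(X)^p-c^{p-1}f_{\H}(X)=\wp_c(f_{\H}(X))$. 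Evaluating at $X=y$ and using $f(y)=u$ produces $u=w^p-c^{p-1}w$; dividing by $c^p$ gives $z_{\H}^p-z_{\H}=u/c^p=u/f_{\H}(\varepsilon_{\H})^p$, as claimed. As $\H$ runs over all codimension-one subspaces, the fields $k(z_{\H})$ exhaust the degree-$p$ subextensions, which completes the statement.
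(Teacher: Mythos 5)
Your proof is correct and follows essentially the same route as the paper: you factor $f$ through $f_{\H}$ via the coset decomposition $\G_f=\bigsqcup_{j}(\H+j\varepsilon_{\H})$, obtain $f(X)=f_{\H}(X)^p-f_{\H}(\varepsilon_{\H})^{p-1}f_{\H}(X)$, and set $z_{\H}=f_{\H}(y)/f_{\H}(\varepsilon_{\H})$, exactly as in the paper's argument. The only cosmetic difference is that you close the fixed-field identification by primality of $[K^{\H}:k]=p$, while the paper verifies the action of a full set of generators of $G$ on $z_{\H}$ directly; both are fine.
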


\begin{proof} 
We have $f(X)=\prod_{\alpha\in{\ma F}_p}
f_{\H}(X-\alpha \varepsilon_{\H})$. Now, $f_{\H}(X)$ is
an additive polynomial (see for instance \cite[proof of
Proposition 13.4.10]{Vil2006}) and $f_{\H}(X-\alpha\varepsilon_{\H})=
f_{\H}(X)-\alpha f_{\H}(\varepsilon_{\H})$.

Denote $Y:=f_{\H}(X)$. Then
\begin{align*}
f(X)&=\prod_{\alpha=0}^{p-1}(Y-\alpha f_{\H}(\varepsilon_{\H}))
=f_{\H}(\varepsilon_{\H})^p\cdot\prod_{\alpha=0}^{p-1}\Big(\frac{Y}{
f_{\H}(\varepsilon_{\H})}-\alpha\Big)\\
&=f_{\H}(\varepsilon_{\H})^p
\Big(\Big(\frac{Y}{f_{\H}(\varepsilon_{\H})}\Big)^p-\frac{Y}{f_{\H}(
\varepsilon_{\H})}\Big).
\end{align*}

So, 
\begin{gather*}
f(X)=f_{\H}(X)^p-f_{\H}(\varepsilon_{\H})^{p-1}f_{\H}(X)=
f_{\H}(\varepsilon_{\H})^p\Big(\Big(\frac{f_{\H}(X)}{f_{\H}(\varepsilon_{\H})}
\Big)^p-\Big(\frac{f_{\H}(X)}{f_{\H}(\varepsilon_{\H})}\Big)\Big).
\intertext{That is,}
f(X)=f_{\H}(\varepsilon_{\H})^p\wp\Big(\frac{f_{\H}(X)}{f_{\H}(\varepsilon_{
\H})}\Big)=\wp_{f_{\H}(\varepsilon_{\H})}(f_{\H}(X)).
\end{gather*}

In this way we obtain $f(X)=f_{\H}(\varepsilon_{\H})^p(z^p-z)$ 
where $z=\frac{Y}{f_{\H}(\varepsilon_{\H})}=\frac{f_{\H}(X)}{
f_{\H}(\varepsilon_{\H})}$. Let 
\begin{equation}\label{Eq5.3}
z_{\H}:=\frac{f_{\H}(y)}{f_{\H}(
\varepsilon_{\H})}.
\end{equation}
Then 
\[
z_{\H}^p-z_{\H} =\frac{f(y)}{f_{\H}(\varepsilon_{\H})^p}=
\frac{u}{f_{\H}(\varepsilon_{\H})^p} \qquad\text{or}\qquad
\wp_{f_{\H}(\varepsilon_{\H})}\big(
f_{\H}(\varepsilon_{\H})z_{\H}\big)=u.
\]

Furthermore, if the set 
$\{\varepsilon_1,\ldots,\varepsilon_{n-1}\}$ is an
$\ma F_p$--basis of $\H$ and $\varepsilon_n:=\varepsilon_{\H}$,
then $\{\varepsilon_1,\ldots,\varepsilon_{n-1},\varepsilon_n\}$
is a basis of $\G_f$ over $\ma F_p$. We have that if $\sigma_{
\varepsilon_i}(y)=y+\varepsilon_i$, then
$G=\langle
\sigma_{\varepsilon_1},\ldots,\sigma_{\varepsilon_{n-1}},
\sigma_{\varepsilon_n}\rangle$ and we have, for $1\leq i\leq n-1$,
\begin{gather*}
\sigma_{\varepsilon_i}(z_{\H})=\sigma_{\varepsilon_i}\Big(
\frac{f_{\H}(y)}{f_{\H}(\varepsilon_n)}\Big)=\frac{f_{\H}(y+
\varepsilon_i)}{f_{\H}(\varepsilon_n)}=\frac{f_{\H}(y)+
f_{\H}(\varepsilon_i)}{f_{\H}(\varepsilon_n)}=\frac{
f_{\H}(y)+0}{f_{\H}(\varepsilon_n)}=z_{\H},\\
\intertext{and}
\sigma_{\varepsilon_n}(z_{\H})=\frac{f_{\H}(y+\varepsilon_n)}{
f_{\H}(\varepsilon_n)}=z_{\H}+1,
\end{gather*}
so that $k(z_{\H})/k$ is a subextension of $K/k$ of degree $p$, the
field $k(z_{\H})$ is the fixed field under $\H$ and 
$\Gal(k(z_{\H})/k)\cong\langle \sigma_{
\varepsilon_{\H}}\rangle$ where $\sigma_{\varepsilon_{\H}}(y)=
y+\varepsilon_{\H}$.
\end{proof}

\begin{observacion}\label{O5.3'} {\rm{
Theorem \ref{T5.3} must be compared with
Theorem \ref{T7.2} which is more general but less explicit.
}}
\end{observacion}

\begin{teorema}\label{T5.4} Let $f(X)\in k[X]$ be a monic separable
additive polynomial of degree $p^n$ with $\G_f\subseteq k$. 
Let $K/k$ be an elementary abelian $p$--extension of degree
$p^n$. Then there exist $y\in K$ and $u\in k$ such that
$K=k(y)$ and $f(y)=u$.
\end{teorema}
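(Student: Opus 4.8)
The plan is to realize the desired generator $y$ not through the degree--$p$ subextensions of Theorem \ref{T5.3}, but directly, by recognizing the defining condition ``$f(y)=u\in k$ with $k(y)=K$'' as a coboundary--splitting problem and solving it via the additive form of Hilbert's Theorem 90 (equivalently, the normal basis theorem). The starting observation is that if $K=k(y)$ with $f(y)=u\in k$, then for every $\sigma\in G=\Gal(K/k)$ one has $f(\sigma y-y)=0$, so $\sigma\mapsto\sigma(y)-y$ is a map $G\to\G_f$; since $\G_f\subseteq k$ is fixed by $G$, this map is in fact a group homomorphism, and it is injective precisely when $k(y)=K$. I would therefore turn this around and \emph{build} such a $y$.

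First I would fix the target. Since $K/k$ is elementary abelian of degree $p^n$ we have $G\cong C_p^n$, and by Proposition \ref{P2.1} also $\G_f\cong C_p^n$ as ${\ma F}_p$--vector spaces; hence there exists an ${\ma F}_p$--linear isomorphism $\phi\colon G\lra\G_f$. The goal becomes to find $y\in K$ with $\sigma(y)-y=\phi(\sigma)$ for all $\sigma\in G$. Because $\G_f\subseteq k$, each $\sigma$ fixes the values of $\phi$, so viewing $\phi$ as a $1$--cochain of $G$ valued in $(K,+)$ the cocycle identity reads $\phi(\sigma\tau)=\phi(\sigma)+\sigma\phi(\tau)=\phi(\sigma)+\phi(\tau)$, which holds exactly because $\phi$ is a homomorphism. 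Thus $\phi$ is a $1$--cocycle, and $H^1(G,K^{+})=0$ provides a $y$ splitting it. To keep this self--contained I would exhibit $y$ explicitly: choose $\theta\in K$ with $\Tr_{K/k}(\theta)=1$ (possible since $K/k$ is separable, so the trace is surjective) and set $y=-\sum_{\tau\in G}\phi(\tau)\,\tau(\theta)$; a one--line computation using the cocycle identity and reindexing by $\rho=\sigma\tau$ gives $\sigma(y)-y=\phi(\sigma)\Tr_{K/k}(\theta)=\phi(\sigma)$.

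With such a $y$ in hand the theorem follows immediately. Put $u:=f(y)$. For each $\sigma$ we have $\sigma(y)=y+\phi(\sigma)$ with $\phi(\sigma)\in\G_f$, so by additivity $f(\sigma y)=f(y)+f(\phi(\sigma))=f(y)$, whence $f(y)$ is $G$--invariant and $u\in k$. Injectivity of $\phi$ yields $k(y)=K$: if $\sigma$ fixes $y$ then $\phi(\sigma)=\sigma(y)-y=0$, so $\sigma=\Id$, and therefore $[k(y):k]=|G|=p^n=[K:k]$. Hence $K=k(y)$ with $f(y)=u\in k$, as required; note that $f(X)-u$ is then automatically the irreducible polynomial of $y$, being monic of degree $p^n=[k(y):k]$.

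The only step that is not a formal verification is the vanishing $H^1(G,K^{+})=0$, which I regard as the crux; it rests on the normal basis theorem (so that $K$ is a free $k[G]$--module and the trace is onto), and it is what makes the explicit $y$ above well defined. Everything else—the homomorphism/cocycle dictionary, the invariance of $f(y)$, and the degree count—is direct. I do not expect any genuine obstacle beyond correctly setting up the trivial $G$--action on $\G_f\subseteq k$, which is exactly the hypothesis $\G_f\subseteq k$ that the statement assumes.
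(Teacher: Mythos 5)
Your proof is correct, and it takes a genuinely different route from the paper's. The paper's proof is constructive at the level of Artin--Schreier data: it writes $K=k(y_1,\ldots,y_n)$ with $y_i^p-y_i=\gamma_i\in k$ and $\sigma_i(y_j)=y_j+\delta_{ij}$, sets $y=\sum_{i=1}^n\mu_iy_i$ for a basis $\{\mu_1,\ldots,\mu_n\}$ of $\G_f$ over ${\ma F}_p$, and checks $f(y)\in k$ by direct computation via $y_i^{p^m}=y_i+l_m(\gamma_i)$, which yields the explicit value $u=\sum_{i=1}^n h_i$ with $h_i=\sum_{j}a_j\mu_i^{p^j}l_j(\gamma_i)$. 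You instead choose an ${\ma F}_p$--linear isomorphism $\phi\colon G\lra\G_f$, note that since $\G_f\subseteq k$ the map $\phi$ is a $1$--cocycle for $G$ on $(K,+)$, and split it by additive Hilbert 90, made self--contained by the trace element $y=-\sum_{\tau\in G}\phi(\tau)\tau(\theta)$ with $\Tr_{K/k}(\theta)=1$; your reindexing computation, the $G$--invariance of $f(y)$, the injectivity argument giving $k(y)=K$, and the surjectivity of the trace (from separability) are all correct as stated. In fact the two constructions split the same cocycle: the paper's $y$ satisfies $\sigma(y)-y=\sum_i\nu_i\mu_i$, so it is one explicit solution of your equation $\sigma(y)-y=\phi(\sigma)$, and any two solutions differ by an element of $k$. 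What each buys: the paper's computation produces a usable formula for $u$ in terms of the $\gamma_i$, which later arguments exploit (e.g.\ the proof of Proposition \ref{P6.2} takes the $\gamma_i$ in reduced form and reads off the behavior of $\p$ from $u_0$), whereas your argument is shorter, requires no choice of degree--$p$ generators or verification of their independence, and transfers essentially verbatim to the Witt--vector setting of Section \ref{S8}, where the analogue $H^1(G,W_m(K))=0$ holds. One small correction: your appeal to the normal basis theorem is unnecessary --- the explicit trace--element formula you wrote \emph{is} the standard proof that $H^1(G,K^{+})=0$, and surjectivity of $\Tr_{K/k}$ needs only separability, so your second paragraph already makes the argument complete without that remark.
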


\begin{proof}
Let $y_1,\ldots, y_n\in K$ be such that $K=k(y_1,\ldots, y_n)$ and
$y_i^p-y_i=\gamma_i\in k$. Let $G=\Gal(K/k)$, $G=\langle
\sigma_1,\ldots, \sigma_n\rangle$ with $\sigma_i(y_j)=
y_j+\delta_{ij}$ where $\delta_{ij}$ is the Kronecker delta. Let
$\{\mu_1,\ldots,\mu_n\}$ be a basis of $\G_f$ over $\ma F_p$.
Let $y:=\sum_{i=1}^n\mu_i y_i$ and let $f(X)$ be given as in (\ref{E2}).

We have $y_i^p=y_i+\gamma_i$, $y_i^{p^2}=y_i^p+\gamma_i^p=
y_i+\gamma_i+\gamma_i^p$ and in general $y_i^{p^m}=y_i+
l_m(\gamma_i)$ where $l_m(\gamma_i)=\gamma_i+\gamma_i^p
+\cdots+\gamma_i^{p^{m-1}}$, $m\in \ma N$. Then
\[
f(\mu_iy_i)=\sum_{j=0}^n a_j(\mu_i y_i)^{p^j}=
\sum_{j=0}^n a_j(\mu_i^{p^j}y_i+\mu_i^{p^j} l_j(\gamma_i))=
y_i f(\mu_i)+h_i,
\]
with $h_i=\sum_{j=0}^n a_j\mu_i^{p^j}l_j(\gamma_i)$ and
$y_if(\mu_i)=0$ because $\mu_i\in \G_f$. Therefore,
\[
f(y)=f\big(\sum_{i=1}^n\mu_iy_i\big)=\sum_{i=1}^nf(\mu_iy_i)=
\sum_{i=1}^nh_i=u\in k.
\]

If $\sigma\in G$, there exist $\nu_1,\ldots,\nu_n\in \ma F_p$ such that
$\sigma=\sigma_1^{\nu_1}\cdots\sigma_n^{\nu_n}$ and
\[
\sigma(y)=\sigma\big(\sum_{i=1}^n \mu_iy_i\big)=\sum_{i=1}^n
\mu_i(y_i+\nu_i)=y+\sum_{i=1}^n\nu_i\mu_i.
\]
Thus $\sigma=\Id\iff \sigma(y)=y\iff \sum_{i=1}^n \nu_i\mu_i=0
\iff \nu_1=\cdots=\nu_n=0$, confirming that $\{\mu_1,\ldots,\mu_n\}$
is a basis of $\G_f$ over 
${\ma F}_p$ and that $G=\Gal(K/k)=\langle \sigma_1,
\ldots,\sigma_n\rangle$. 
\end{proof}

\section{Decomposition of prime divisors in elementary abelian
$p$--extensions of function fields}\label{S6}

Let us to consider $k=k_0(T)$ a rational function field
where we assume that $k_0$ is a finite field with
$\G_f\subseteq k_0$. The aim of this section is to describe the
decomposition of the unramified primes in an elementary
abelian $p$--extension $K$ of $k$ given by $K=k(y)$ where
$f(y)=u$, $f(X)\in k_0[X]$ given by (\ref{E2}) 
and such that $\G_f\subseteq k_0$.
Note that the decomposition group of any unramified prime
is a cyclic group and therefore it is of order $1$ or $p$. We will
assume that the extension $K/k$ is geometric.

A fundamental result that we will use in this section is
the decomposition of $\p$ in Artin--Schreier extensions.

\begin{proposicion}\label{P6.1}
Let $L/K$ be a cyclic extension of degree $p$ such that
$K=k(w)$ with $w\in L$ given in the form
\begin{gather}\label{Eq6.1}
w^p-w=u=\sum_{i=1}^r\frac{Q_i}{P_i^{e_i}} + f(T)=
\frac{Q}{P_1^{e_1}\cdots P_r^{e_r}}+f(T),
\end{gather}
where $P_i\in R_T^+$, $Q_i\in R_T$, 
$\mcd(P_i,Q_i)=1$, $e_i>0$, $p\nmid e_i$, $\deg Q_i<
\deg P_i^{e_i}$, $1\leq i\leq r$,
$\deg Q<\sum_{i=1}^r\deg P_i^{e_i}$, $f(T)\in R_T$,
with $p\nmid \deg f$ in case $f(T)\not\in k_0$
and $f(T)\notin \wp(k_0)$ when $f(T)
\in k_0^{\ast}$. Then the prime divisor $\p$ is
\l
\item decomposed if $f(T)=0$.
\item inert if $f(T)\in k_0$ and $f(T)\not\in \wp(k_0)$.
\item ramified if $f(T)\not\in k_0$ (so that $p\nmid\deg f$). \fin
\end{list}
\end{proposicion}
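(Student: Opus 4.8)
The plan is to study the unique infinite prime $\p$ of $k=k_0(T)$, whose local uniformizer is $1/T$ and whose residue field is $k_0$ (the extension being geometric, so the constant field does not grow), and to read off its splitting type from the shape of $u$ by means of Hasse's criterion for Artin--Schreier extensions \cite{Has34}.

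First I would record the valuations at $\p$. Since $\deg Q_i<\deg P_i^{e_i}$, each partial fraction satisfies
\[
v_{\p}\Big(\frac{Q_i}{P_i^{e_i}}\Big)=\deg P_i^{e_i}-\deg Q_i>0,
\]
so the entire finite part lies in the maximal ideal of $\o_{\p}$ and affects neither the pole order nor the residue of $u$ at $\p$. Consequently the behaviour of $u$ at $\p$ is governed solely by $f(T)$: if $f(T)\in k_0$ then $v_{\p}(u)\geq 0$ and $u\equiv f(T)\pmod{\p}$, whereas if $f(T)\notin k_0$ then $v_{\p}(u)=-\deg f$ with $\mcd(\deg f,p)=1$.

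Then I would dispose of the three cases in turn. If $f(T)=0$, then $v_{\p}(u)>0$, so $u\equiv 0\pmod{\p}$ and the residual Artin--Schreier polynomial factors as $X^p-X=\prod_{\alpha\in{\ma F}_p}(X-\alpha)$, splitting completely over $k_0$; by Hasse's criterion $\p$ is unramified and fully decomposed. If $f(T)\in k_0^{\ast}\setminus\wp(k_0)$, then $u\equiv f(T)\pmod{\p}$ and $X^p-X-f(T)$ is irreducible over the residue field $k_0$ (the standard fact that $X^p-X-a$ is irreducible over a field of characteristic $p$ exactly when $a\notin\wp(k_0)$); hence $\p$ is unramified with a single prime of residue degree $p$ above it, i.e. inert. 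If $f(T)\notin k_0$, then $v_{\p}(u)=-\deg f<0$ with $p\nmid\deg f$, and $\p$ is totally ramified.

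The delicate point is case (c): I must check that the pole order $-\deg f$ is already minimal, so that Hasse's criterion genuinely applies. This follows from the valuation argument in the proof of Theorem \ref{T5.1} specialized to $n=1$: for any $c\in k$ one has $v_{\p}(\wp(c))\geq 0$ when $v_{\p}(c)\geq 0$, and $v_{\p}(\wp(c))=p\,v_{\p}(c)\equiv 0\pmod p$ when $v_{\p}(c)<0$; since $v_{\p}(u)=-\deg f\not\equiv 0\pmod p$, no substitution $w\mapsto w-c$ can lower the pole order, so $u$ is in reduced form at $\p$ and the criterion gives ramification directly. In cases (a) and (b) the only remaining care is to translate the factorization of the residual Artin--Schreier polynomial over the finite field $k_0$ into the number and residue degree of the primes lying over the unramified $\p$.
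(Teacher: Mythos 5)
Your proof is correct. The paper gives no argument for Proposition \ref{P6.1} at all --- it is recalled as a classical fact (Hasse \cite{Has34}) with the QED box attached to the statement --- and what you have written is precisely the standard proof behind that citation: the partial-fraction part has strictly positive valuation at $\p$ so that $u\equiv f(T)\bmod \p$, Hensel lifting of the separable residual polynomial $X^p-X-\overline{u}$ over the residue field $k_0$ settles cases (a) and (b) (split when $\overline{u}\in\wp(k_0)$, inert when $X^p-X-f$ is irreducible over $k_0$), and in case (c) the pole order $\deg f$ prime to $p$ forces total ramification, your mod-$p$ valuation argument correctly verifying that no substitution $w\mapsto w-c$ can remove the pole, exactly as in the specialization of Theorem \ref{T5.1} to $n=1$.
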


The following example illustrates several of the results
obtained.

\begin{ejemplo}\label{Ej5.2(1)}{\rm{
Let $k=\mathbb{F}_{27}(T)$ and $K=k(z)/k$ be the $3$--elementary
abelian extension of degree $27$, defined by the equation:
\begin{gather*}
z^{27}-z=\frac{1}{(T+1)^{54}}+\frac{1}{T+1}+T^{9}+T^{3}+T+\omega+1=u(T),
\end{gather*}
where $\omega\in \mathbb{F}_{27} $, $\omega^{3}=
\omega+2$ and $\mathbb{F}_{27}=\mathbb{F}_{3}(\omega)$. 
First, note that if $y=z-\frac{1}{\left(T+1\right)^{2}}$, 
then $k(y)=k(z)$ and $y^{27}-y=(z-\frac{1}{\left(T+
1\right)^{2}})^{27}-(z-\frac{1}{\left(T+1\right)^{2}})=u(T)-
\frac{1}{\left(T+1\right)^{54}}+\frac{1}{\left(T+1\right)^{2}}$. 
That is
\[
y^{27}-y=\frac{1}{\left(T+1\right)^{2}}+\frac{1}{T+1}
+T^{9}+T^{3}+T+\omega+1=r(T).
\]
From Theorem \ref{T4.3}, the prime divisor
$\mathcal{P}_{1}$, associated to the irreducible polynomial
$p_{1}(T)=T+1$ and $\mathcal{P}_{\infty}$, are ramified.
Next, we compute the ramification index 
for these prime divisors.
We can use Proposition \ref{P3.1} to obtain all
the $13=\frac{3^{3}-1}{3-1}$ 
Artin-Schreier subextensions. Three of these 
extensions that generate the
whole extension are given by:  
\begin{align*}
y_{1}^{3}-y_{1}&=r(T),\\
y_{1}&=y^{9}+y^{3}+y.\\
y_{2}^{3}-y_{2}&=\omega r(T),\\
y_{2}&=(\omega y)^{9}+(\omega y)^{3}+\omega y.\\
y_{3}^{3}-y_{3}&=\omega^{2} r(T),\\
y_{3}&=(\omega^{2} y)^{9}+(\omega^{2} y)^{3}+\omega^{2} y.
\end{align*}

The following diagram represents these 3--subextensions and
some elementary abelian 3--extensions of degree 9.
\[
\xymatrix{ 
& & k(y)   & & \\
&k(y_{1},y_{2}) \ar@{-}[ur]   &   k(y_{1},y_{3})\ar@{-}[u] & 
 k(y_{2},y_{3})\ar@{-}[ul]  &\\
k(y_{1}) \ar@{-}[ur] \ar@{-}[urr]|!{[ur];[rr]}\hole & &  
 k(y_{2})\ar@{-}[ul] \ar@{-}[ur] &  & 
  k(y_{3})\ar@{-}[ul] \ar@{-}[ull] |!{[ll];[ul]}\hole\\
 &  & k=\mathbb{F}_{27}(T) \ar@{-}[urr]^{3} 
 \ar@{-}[u]^{3} \ar@{-}[ull]_{3} &  & 
}
\]

Note that $\mathcal{P}_{1}$ 
is fully ramified, that is
$e(\wp_{1}|\mathcal{P}_{1})=27$ for a place
$\wp_{1}$ above $\mathcal{P}_{1}$, since
$v_{\mathcal{P}_{1}}(r(T))=-2$ is relatively prime to
$3$. On the other hand, if $z_{1,1}=y_{1}-T^{3}$, 
then $k(z_{1,1})=k(y_{1})$ and $z^{3}_{1,1}-z_{1,1}=
\frac{1}{\left(T+1\right)^{2}}+\frac{1}{T+1}+2T^{3}+T+
\omega+1$. Now, let $z_{1,2}=z_{1,1}+T-\omega^{2}$.
We have $k(z_{1,2})=k(z_{1,1})=k(y_{1})$ and
\[
z^{3}_{1,2}-z_{1,2}=\frac{1}{\left(T+1\right)^{2}}
+\frac{1}{T+1}=r_{1}(T).
\]
From Proposition \ref{P6.1}, 
$\mathcal{P}_{\infty}$ decomposes in $k(y_{1})/k$.

In the extension $k(y_{2})/k$, we let
$z_{2,1}=y_{2}+(2\omega+2)T^{3}$.
Then $k(z_{2,1})=k(y_{2})$
and $z^{3}_{2,1}-z_{2,1}=(2\omega+1)T^{3}+\omega T+
\omega^{2}+\omega+\frac{\omega}{\left(T+1\right)^{2}}+
\frac{\omega}{T+1}$. Again, if we make the substitution:
$z_{2,2}=z_{2,1}-2\omega T$, in this extension, we obtain
$k(z_{2,2})=k(z_{2,1})=k(y_{2})$ and
\[
z^{3}_{2,2}-z_{2,2}= \frac{\omega}{\left(T+1\right)^{2}}
+\frac{\omega}{T+1}+ \omega^{2}+\omega=r_{2}(T),
\]
because $\omega^{2}+\omega\notin
\wp(\mathbb{F}_{27})$, from Proposition
\ref{P6.1} it follows that the infinite prime is inert in
$k(y_{2})/k$. Finally, with the substitutions:
 $z_{3,1}=y_{3}-(2\omega^{2}+\omega+2)T^{3}$ 
and $z_{3,2}=z_{3,1}-(2\omega^{2}+2)T$, we obtain $k(z_{3,2})
=k(z_{3,1})=k(y_{3})$, where
\[
z^{3}_{3,2}-z_{3,2}=
\frac{\omega^{2}}{\left(T+1\right)^{2}}+\frac{\omega^{2}}{T+1}
+2T+\omega^{2}+\omega+2=r_{3}(T).
\]
Therefore, 
$\mathcal{P}_{\infty}$ is ramified in $k(y_{3})/k$.
Hence, the decomposition field of the infinite prime $\p$
is $k(y_{2})$ and the inertia field is
$k(y_{2},y_{3})$. We obtain 
$\p$ has ramification index equal to $3$,
inertia degree equal to $3$ and has $3$ prime
divisors in $K$ above it.
}}
\end{ejemplo}

\bigskip

Returning to our study, 
note that with the hypothesis of being of degree $p$, separable and
with the roots in the base field, essentially there is a unique additive
polynomial of degree $p$. If $X^p+aX$ is an additive
polynomial, their roots are
equal to $i\alpha$ with $0\leq i\leq p-1$ and
$\alpha=\sqrt[p-1]{-a}$ a fixed nonzero root of $X^p+aX$.
We are assuming that $\alpha\in k_0^{\ast}$. Then $\alpha^p=-\alpha a$ and
\[
X^p+aX=\alpha^p\big(Z^p-Z\big)\quad\text{with}\quad Z=\frac{X}{\alpha},
\quad \alpha\in \f,
\]
that is, $X^p+aX=\alpha^p\wp\big(\frac{X}{\alpha}\big)$ with 
$\alpha^{p-1}=-a$.

We come back to the extension $K=k(y)$, $\Gal(K/k)\cong \G_f$. 
We have that the decomposition of $\p$ in $K$, in case of being
unramified is

\begin{proposicion}\label{P6.2}
Let $K=k(y)/k$ with $f(y)=u$ and $u$ given in a reduced form 
{\rm{(\ref{Eq5.2})}}. We assume that $\p$ is unramified in $K/k$.
If $R(T)=0$, then $\p$ decomposes fully in $K/k$.
Conversely, if $\p$ decomposes fully, then there exists a reduced
form  $f(y)=u$ where $R(T)=0$.
\end{proposicion}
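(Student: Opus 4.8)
The plan is to reduce both implications to the local criterion that $\p$ splits completely in $K/k$ if and only if $u\in f(k_{\p})$, where $k_{\p}\cong k_0((\pi))$ is the completion of $k$ at $\p$, $\pi=1/T$ a uniformizer and $k(\p)=k_0$ the residue field. Indeed, $F(X)=f(X)-u$ is irreducible over $k$ and $\G_f\subseteq k_0\subseteq k_{\p}$, so if $F$ has one root $y_0\in k_{\p}$ then all its roots $y_0+\G_f$ lie in $k_{\p}$; thus $\p$ splits completely exactly when $F$ has a root in $k_{\p}$. I will also use repeatedly the valuation estimate for $f(X)=X^{p^n}+\cdots+a_0X$ with $a_0\in k_0^{\ast}$: whenever $v_{\p}(\eta)>0$ the term $a_0\eta$ dominates, so $v_{\p}(f(\eta))=v_{\p}(\eta)>0$.

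For the forward implication I would argue as follows. If $R(T)=0$ then $u=\sum_{i=1}^{r}Q_i/P_i^{\alpha_i}$ is a sum of proper fractions, so $v_{\p}(u)>0$ and $u\in\o_{\p}$ with residue $0$. Then $F(X)$ reduces modulo $\p$ to $\sum\bar a_iX^{p^i}$, which has $X=0$ as a simple root since $\bar a_0\neq 0$, and Hensel's lemma lifts this to a root $y_0\in\o_{\p}$ of $F$; hence $u\in f(k_{\p})$ and $\p$ splits completely. In the spirit of the paper one may instead descend to the degree $p$ subfields $k(z_{\H})=K^{\H}$ of Theorem \ref{T5.3}, where $z_{\H}^{p}-z_{\H}=c_{\H}u$ with $c_{\H}=f_{\H}(\varepsilon_{\H})^{-p}\in k_0^{\ast}$; since $c_{\H}u$ again has vanishing polynomial part and its finite poles are cleared by proper fractions, Proposition \ref{P6.1}(a) applies and $\p$ splits in every $k(z_{\H})$, whence the decomposition group, lying in every hyperplane $\H$ and so in $\bigcap_{\H}\H=\{0\}$, is trivial.

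For the converse, suppose $\p$ splits completely, and choose $y_0\in k_{\p}$ with $f(y_0)=u$. Writing $y_0=\sum_j c_j\pi^{j}$ with $c_j\in k_0$, I would set $\delta:=\sum_{j\leq 0}c_j\pi^{j}$, the polar-plus-constant part of $y_0$; since $\pi^{-1}=T$, this $\delta$ is a polynomial in $T$, hence lies in $k$. Replacing $y$ by $y-\delta$ keeps $K=k(y-\delta)$ and changes $u$ into $u''=u-f(\delta)=f(y_0-\delta)$, and by construction $v_{\p}(y_0-\delta)>0$, so the valuation estimate gives $v_{\p}(u'')>0$. Thus $u''$ has no pole and no constant term at $\p$, i.e.\ its polynomial part is $0$. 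It then remains to bring $u''$ to a reduced form in the sense of Theorem \ref{T5.2}; the only work left is at the finite primes $P_i$, where each clearing step can be taken to be a substitution by a proper fraction supported at $P_i$, hence of positive $\p$-valuation, so by the valuation estimate $v_{\p}$ stays positive and the final reduced form has $R(T)=0$.

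The step I expect to be the main obstacle is precisely this last one in the converse: verifying that the passage to reduced form never reintroduces a pole or a constant at $\p$. This forces one to choose all the finite-prime substitutions as proper fractions supported away from $\p$ (exactly the substitutions used in the proofs of Theorems \ref{T5.1} and \ref{T5.2}) and to invoke that $f$ preserves positive $\p$-valuation. A related subtlety is the treatment of the constant term: truncating $\delta$ at the polar part alone would only yield $R(T)\in f(k_0)$, so the constant $c_0$ of $y_0$ must be folded into $\delta$ to force $R(T)=0$ outright.
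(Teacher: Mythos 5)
Your proof is correct, and it takes a genuinely different route from the paper's. The paper proves the forward direction by descending to the degree $p$ subextensions of Theorem \ref{T5.3} and invoking Proposition \ref{P6.1} --- exactly the variant you sketch as an aside, which, as you note, needs the small extra step of first bringing $c_{\H}u$ to Artin--Schreier reduced form at the finite primes --- and proves the converse by writing $K$ as a compositum of Artin--Schreier extensions split at $\p$, assembling a generator $y_0=\sum_i\mu_iy_i$ via Theorem \ref{T5.4}, and then \emph{asserting} that the reduced form of $f(y_0)=u_0$ has vanishing polynomial part. Your argument instead works in the completion $k_{\P_{\infty}}\cong k_0((\pi))$: since the roots of $F(X)=f(X)-u$ differ by elements of $\G_f\subseteq k_0$, full decomposition of $\p$ is equivalent to $F$ having a root in $k_{\P_{\infty}}$ (this deserves the one-line justification via $K\otimes_kk_{\P_{\infty}}\cong\prod_{\eu p\mid\p}K_{\eu p}$); Hensel's lemma, using $F'=a_0\neq 0$, gives the forward direction, and truncating a root $y_0$ at its polar-plus-constant part gives the converse. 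This buys you two things. First, your converse is complete where the paper's final step is only sketched. Second, it proves strictly more than asked: since $f(\delta)\in k_0[T]$ is a polynomial, $u''=u-f(\delta)$ has the \emph{same} partial-fraction terms at the finite primes as $u$, so when $u$ is already reduced your worried-about final reduction step is vacuous --- by uniqueness of partial fractions, $v_{\p}(u'')>0$ forces $R(T)=f(\delta)$ for the \emph{given} reduced form; the degree condition $\deg R=\lambda_0p^m$ with $0\leq m<n$, $\mcd(\lambda_0,p)=1$ rules out $\deg\delta\geq 1$ (which would give $p^n\mid\deg f(\delta)$), hence $\delta\in k_0$ and $R\in f(k_0)$, and reducedness then forces $R=0$. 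In other words, your local argument establishes Proposition \ref{P6.2''} unconditionally, without the hypothesis (\ref{Eq*}) that the paper explicitly leaves unproved, whereas the paper's subextension method cannot get past (\ref{Eq*}) in the general additive case.
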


\begin{proof}
If $R(T)=0$ then, from Theorem \ref{T5.3} and Proposition
\ref{P6.1}, it follows that $\p$ decomposes in every subextension
of degree $p$ and therefore $\p$ is fully decomposed in $K/k$.

Conversely, assume that $\p$ is fully decomposed. Let
$K=k(y_1,\ldots,y_n)$ with $k(y_i)/k$ cyclic extensions of degree
$p$ given by Artin--Schreier equations in reduced form.
Since $\p$ decomposes in all of them, from Theorem \ref{T5.4},
it follows $K=k(y_0)$ with $y_0=\sum_{
i=1}^n\mu_iy_i$, $\{\mu_1,\ldots,\mu_n\}$ a basis of $\G_f$ over
${\ma F}_p$, and in the reduced form of $f(y_0)=u_0$,
the polynomial corresponding to the behavior of $\p$ is $0$. 
\end{proof}

In the special case of the additive polynomial $f(X)=X^q-X$, 
we are going to prove more than Proposition \ref{P6.2}.
To this end, we prove

\begin{lema}\label{L6.2'} Let $S\in{\ma F}_{q^m}=k_0$. 
Then we have
$\mu S\in \im\wp$ for all $\mu\in\F$ if and only if there exists
$\lambda\in {\ma F}_{q^m}$ such that $S=\lambda^q-\lambda$.
\end{lema}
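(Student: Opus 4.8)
The plan is to translate both sides of the equivalence into vanishing conditions on traces, using the classical additive form of Hilbert's Theorem~90 for finite fields, and then to relate the two trace levels by transitivity. Throughout, recall that $k_0={\ma F}_{q^m}={\ma F}_{p^{nm}}$, so $\wp(c)=c^p-c$ is the \emph{absolute} Artin--Schreier operator on ${\ma F}_{q^m}$, whereas $\lambda\mapsto\lambda^q-\lambda$ is the \emph{relative} one for ${\ma F}_{q^m}/\F$.

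First I would recall the solvability criterion at the two relevant levels. Over the finite field ${\ma F}_{q^m}$ the equation $x^p-x=a$ has a solution $x\in{\ma F}_{q^m}$ if and only if $\Tr_{{\ma F}_{q^m}/{\ma F}_p}(a)=0$; hence, for each fixed $\mu\in\f$,
\[
\mu S\in\im\wp \iff \Tr_{{\ma F}_{q^m}/{\ma F}_p}(\mu S)=0.
\]
Likewise the relative equation $\lambda^q-\lambda=S$ is solvable in ${\ma F}_{q^m}$ (as an extension of $\F$) if and only if $\Tr_{{\ma F}_{q^m}/\F}(S)=0$, so the right-hand condition of the lemma is equivalent to $\Tr_{{\ma F}_{q^m}/\F}(S)=0$.

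Next I would connect the two trace maps by transitivity, $\Tr_{{\ma F}_{q^m}/{\ma F}_p}=\Tr_{\F/{\ma F}_p}\circ\Tr_{{\ma F}_{q^m}/\F}$, together with the fact that $\mu\in\F$ is fixed by $\Gal({\ma F}_{q^m}/\F)$ and so factors out of the inner trace:
\[
\Tr_{{\ma F}_{q^m}/{\ma F}_p}(\mu S)=\Tr_{\F/{\ma F}_p}\big(\mu\,\Tr_{{\ma F}_{q^m}/\F}(S)\big).
\]
Writing $t:=\Tr_{{\ma F}_{q^m}/\F}(S)\in\F$, the left-hand hypothesis of the lemma becomes the statement that $\Tr_{\F/{\ma F}_p}(\mu t)=0$ for every $\mu\in\f$.

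Finally I would close the equivalence. If $t=0$ the displayed condition holds trivially for all $\mu$, and $\Tr_{{\ma F}_{q^m}/\F}(S)=0$ produces the desired $\lambda$. Conversely, assume $\Tr_{\F/{\ma F}_p}(\mu t)=0$ for all $\mu\in\f$; if $t\neq 0$ then $\mu\mapsto\mu t$ is a bijection of $\f$ onto itself, so $\Tr_{\F/{\ma F}_p}$ would vanish on all of $\f$, hence (as it also kills $0$) on all of $\F$, contradicting the surjectivity of the trace of the separable extension $\F/{\ma F}_p$. Thus $t=0$, i.e.\ $\Tr_{{\ma F}_{q^m}/\F}(S)=0$, giving $S=\lambda^q-\lambda$. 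The work here is essentially bookkeeping: the only real obstacle is applying the two Artin--Schreier criteria at the correct levels (${\ma F}_p$ for $\wp$, $\F$ for the $q$-power operator) and invoking $\F$-linearity of the inner trace; the one genuinely substantive point is the final contradiction, which rests on the nonvanishing of $\Tr_{\F/{\ma F}_p}$.
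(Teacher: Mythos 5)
Your proof is correct, but it takes a genuinely different route from the paper's. You linearize both sides of the equivalence through trace maps: the absolute Artin--Schreier criterion ($\mu S\in\im\wp$ in ${\ma F}_{q^m}$ iff $\Tr_{{\ma F}_{q^m}/{\ma F}_p}(\mu S)=0$), the relative criterion for $\lambda\mapsto\lambda^q-\lambda$ (solvable iff $\Tr_{{\ma F}_{q^m}/\F}(S)=0$, i.e.\ additive Hilbert 90 for the cyclic extension ${\ma F}_{q^m}/\F$), and then transitivity together with the $\F$--linearity of the inner trace reduce the whole lemma to the single fact that $\Tr_{\F/{\ma F}_p}$ does not vanish identically on $\f$ (your bijection argument $\mu\mapsto\mu t$ is the right way to close this). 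The paper avoids trace criteria entirely: it factors the relative operator $g(\lambda)=\lambda^q-\lambda$ as $g=\wp\circ h$ with $h(\lambda)=\lambda+\lambda^p+\cdots+\lambda^{p^{n-1}}$, which yields the easy direction exactly as your $t=0$ case does, and for the converse it runs a counting argument: if $S\notin\im g$, the $q$--element subgroup $A=\{\mu S\}_{\mu\in\F}$ meets $\im g$ only in $0$, so $|A+\im g|=q\cdot\frac{q^m}{q}=q^m$ would exceed $|\im\wp|=\frac{q^m}{p}$, contradicting $A+\im g\subseteq\im\wp$. Your version is shorter and more conceptual, exhibiting the lemma as pure trace bookkeeping; the paper's version is more self-contained (only kernels, images and cardinalities of additive maps, no Hilbert 90) and its counting style is closer in spirit to the question the authors raise immediately after the lemma, namely whether the analogue (\ref{Eq*}) holds for a general additive polynomial in place of $X^q-X$ --- a setting in which your off-the-shelf trace criteria are no longer available, whereas an image-counting argument at least has a chance of being adapted.
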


\begin{proof} We consider the homomorphism
$g\colon{\ma F}_{q^m}\lra {\ma F}_{q^m}$,
$g(\lambda)=\lambda^q-\lambda$. Then $\ker g=\F$ and $|\im g|=
\frac{q^m}{q}$. The Artin--Schreier homomorphism,
$\wp\colon {\ma F}_{q^m}\to
{\ma F}_{q^m}$, $\wp(\lambda)=\lambda^p-\lambda$, satisfies
$\ker \wp={\ma F}_{p}$ and $|\im\wp |=\frac{q^m}{p}$. Finally we consider
$h\colon {\ma F}_{q^m}\lra {\ma F}_{q^m}$ given by $h(\lambda)=\lambda
+\lambda^p+\cdots+\lambda^{p^{n-1}}$, $q=p^n$. Then 
$h(\lambda)^p-h(\lambda)=\lambda^q-\lambda$.

We have that 
\[
(\wp\circ h)=(h\circ \wp)=g.
\] 
Now, if $S=\lambda^q-\lambda$
for some $\lambda\in {\ma F}_{q^m}$, then
$\mu S=(\mu\lambda )^q-(\mu \lambda)$ for all $\mu\in \F$
and we have 
\[
(\mu\lambda )^q-(\mu \lambda)=g(\mu\lambda)=\wp(h(\mu\lambda))
\in \im \wp,
\]
for all $\mu\in\F$.

Conversely, assume that $A:=\big\{\mu S\big\}_{\mu\in\F}
\subseteq \im\wp$. If $S=0$ there is nothing to prove. We consider
$S\neq 0$. 
In case that $S\notin\im g$ we would have $\mu S\notin \im g$ 
for all $\mu\in\f$ since otherwise, if $\mu S=\lambda^q-\lambda$ for some
$\lambda\in {\ma F}_{q^m}$ then for all $\mu\in\F$, $\mu^q=\mu$ and
\[
S=\frac{\lambda^q}{\mu}-\frac{\lambda}{\mu}
=\Big(\frac{\lambda}{\mu}\Big)^q
-\Big(\frac{\lambda}{\mu}\Big)\in \im g,
\]
which is absurd.

Under this assumption
the additive subgroup $A=\big\{\mu S\big\}_{\mu\in \F}$ of
${\ma F}_{q^m}$, which is of cardinality $q$, satisfies
$A\cap \im g=\{0\}$. Since $A\subseteq \im \wp$ and because
$g=\wp\circ h$, that is, $\im g\subseteq \im \wp$, it follows that
$A+\im g\subseteq \im\wp$. But on the other hand
\[
|A+\im g|=|A||\im g|=q\cdot \frac{q^m}{q}=q^m>\frac{q^m}{p}=|\im \wp|,
\]
which is a contradiction.
Thus $A\cap \im g\neq \{0\}$. Therefore there exists $\mu\in\f$
with $\mu S\in \im g$ which implies that $S\in\im g$.
This finishes the proof.
\end{proof}

Let $f(X)$ be an additive polynomial given by (\ref{E2}). 
The result of Lemma \ref{L6.2'} should hold in this case
but we don't have a proof. Let
$\G_f=\L\{\varepsilon_1,\ldots,\varepsilon_n\}$ and consider the
following $n$ subspaces of $\G_f$ of codimension $1$: 
\[
\H_i:=
\L\{\varepsilon_1,\ldots,\varepsilon_{i-1},
\varepsilon_{i+1},\varepsilon_n\},\quad 1\leq i\leq n.
\]
Let $f_{\H_i}(X)=f_i(X)$, $a_i=f_i(\varepsilon_i)\neq 0$. 

What we need is:
\begin{gather}\label{Eq*}
\bigcap_{i=1}^n\im\wp_{a_i}=\im f.
\end{gather}

We have:

\begin{proposicion}\label{P6.2''} Let $K=k(y)/k$ 
with $f(y)=u$ and $u$ given in a reduced form
{\rm{(\ref{Eq5.2})}}. We assume that
$\p$ is unramified in $K/k$. If $f(X)=X^q-X$ 
or if {\rm{(\ref{Eq*})}} holds, then $\p$ decomposes fully if
and only if $R(T)=0$.
\end{proposicion}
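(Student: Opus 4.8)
The plan is to dispose of one implication by citation and put all the work into the other. The implication ``$R(T)=0\Rightarrow\p$ decomposes fully'' is already Proposition \ref{P6.2}, so nothing new is needed there. For the converse I would first record the standing reduction: since $\p$ is assumed unramified, the ramification criterion of Theorem \ref{T5.2} forces $R(T)$ to be a constant, say $S:=R(T)\in k_0=\mathbb{F}_{q^m}$. Thus the entire statement reduces to proving that full decomposition of $\p$ implies $S=0$, and this is exactly where the hypothesis $f(X)=X^q-X$ or (\ref{Eq*}) must enter.

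To handle the converse I would translate ``$\p$ decomposes fully'' into an arithmetic condition on $S$ by descending to the degree-$p$ subextensions and invoking Proposition \ref{P6.1}. Because $\Gal(K/k)$ is elementary abelian, $\p$ splits completely in $K/k$ precisely when it splits in every degree-$p$ subextension. The point to verify is that in each such subextension the defining Artin--Schreier equation has its partial-fraction part still in reduced form and its polynomial part equal to a constant multiple of $S$; after absorbing that constant modulo $\im\wp$, Proposition \ref{P6.1} reads off that $\p$ splits there if and only if the constant lies in $\im\wp$.

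The two hypotheses then feed into this criterion in parallel ways. When $f(X)=X^q-X$ I would use Proposition \ref{P3.1}: the degree-$p$ subextensions are the $k(y_\mu)$, $\mu\in\f$, with $y_\mu^p-y_\mu=\mu u$, whose polynomial part at $\p$ is $\mu S$. Hence full decomposition is equivalent to $\mu S\in\im\wp$ for all $\mu\in\f$, and Lemma \ref{L6.2'} identifies this with $S=\lambda^q-\lambda$ for some $\lambda\in k_0$, i.e. $S\in f(k_0)$. In the general case I would instead invoke Theorem \ref{T5.3} with the codimension-one subspaces $\H_i$ and the constants $a_i=f_i(\varepsilon_i)$: the $i$-th subextension is $z_{\H_i}^p-z_{\H_i}=u/a_i^p$, with polynomial part $S/a_i^p$, so splitting there is equivalent to $S/a_i^p\in\im\wp$, that is $S\in\im\wp_{a_i}$ (using $\wp_{a_i}(c)=a_i^p\wp(c/a_i)$). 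Full decomposition then yields $S\in\bigcap_{i=1}^n\im\wp_{a_i}=\im f=f(k_0)$ by hypothesis (\ref{Eq*}).

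In both cases full decomposition has forced $S\in f(k_0)$, and here the reduced-form constraint closes the argument: Theorem \ref{T5.2} requires $S=0$ or $S\notin f(k_0)$, so $S=0$, i.e. $R(T)=0$. I expect the main obstacle to be the middle step rather than this final arithmetic, namely normalizing each subextension's equation into the precise shape demanded by Proposition \ref{P6.1} --- verifying that multiplication by the constant $\mu$ (resp. division by $a_i^p$) leaves the partial-fraction part reduced and that the whole polynomial part is the single constant $\mu S$ (resp. $S/a_i^p$) once the rest is absorbed into $\im\wp$. Once that normalization is in place, Lemma \ref{L6.2'} and hypothesis (\ref{Eq*}) supply the membership $S\in f(k_0)$ and the reduced-form constraint delivers $S=0$ with no further computation.
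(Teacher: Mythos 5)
Your proposal is correct and takes essentially the same route as the paper's own proof: after noting that unramifiedness forces $R(T)\in k_0$, both arguments pass to the degree-$p$ subextensions (via Proposition \ref{P3.1} for $f(X)=X^q-X$, via Theorem \ref{T5.3} in general), apply Proposition \ref{P6.1} to read off splitting from the constant term, and then use Lemma \ref{L6.2'} resp.\ hypothesis (\ref{Eq*}) to conclude $R(T)\in f(k_0)$, whence the reduced-form condition of Theorem \ref{T5.2} gives $R(T)=0$. The normalization step you flag as the main obstacle is in fact harmless, since multiplying $u$ by $\mu\in\f$ or by $f_{\H}(\varepsilon_{\H})^{-p}\in k_0^{\ast}$ preserves all the reduced-form conditions on the partial-fraction part.
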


\begin{proof}
First we consider the particular case $f(X)=X^q-X$.
We have that $\p$ decomposes fully in $K/k$ if and only
if it decomposes fully in every intermediate extension of
degree $p$: $y_{\mu}^p-y_{\mu}=\mu u$ where
$y_{\mu}=h(\mu y)=(\mu y)^{p^{n-1}}+\cdots +(\mu y)^p + 
(\mu y)$, $\mu\in \f$. The latter is equivalent to
$\mu R(T)\in\im\wp({\ma F}_{q^m})$ for all $\mu\in\F$.
From Lemma \ref{L6.2'} the latter is equivalent to
$R(T)=\lambda^q-\lambda$ for some 
$\lambda\in{\ma F}_{q^m}$. Since $u$ is in reduced form,
it follows that $R(T)=0$.

For the general case, from Theorem \ref{T5.3}, $\p$ 
decomposes fully in $K/k$ if and only if $\p$ decomposes in
every subextension of degree $p$ given by
\begin{equation}\label{infinito}
z_{\H}^p-z_{\H}=\frac{u}{f_{\H}(\varepsilon_{\H})^p},
\end{equation}
for every hyperplane $\H$ of $\G_f$. The term
(\ref{infinito}) that determines the behavior of $\p$
is $\frac{R(T)}{f_{\H}(\varepsilon_{\H})^p}$. Therefore
$\p$ decomposes fully in (\ref{infinito}) if and only if
$R(T)\in\im \wp_{f_{\H}(\varepsilon_{\H})}$. Now the result
is a consequence of equation (\ref{Eq*}).
\end{proof}

Next, we present a result similar to that of
Proposition \ref{P6.1} for a certain family of elementary
abelian $p$--extensions. Note that when the
infinite prime is ramified, we do not give the
precise behavior of the prime in the extension.
 
\begin{corolario}\label{C6.2'''}
Let $K=k_0(T)$ be a rational function field and
$K = k(y)$ be an elementary abelian $p$--extension
of $k$ given by a reduced form
\begin{gather*}
y^q-y=u=\sum_{i=1}^r\frac{Q_i(T)}{P_i(T)^{\alpha_i}} + R(T)
\end{gather*}
satisfying the conditions of Theorem {\rm{\ref{T5.2}}}.

Then the prime divisor $\p$ is
\l
\item fully decomposed if $R(T)=0$.
\item not ramified with inertia degree $p$ if
$R(T)\in k_0$ and $R(T)\not\in\{\lambda^q - \lambda
 | \lambda \in k_0 \}$.
\item ramified if $R(T)\not\in k_0$. \fin
\end{list}
\end{corolario}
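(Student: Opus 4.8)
The plan is to deduce the three cases from results already established, treating ramification first. Case (c) is immediate: by the last sentence of Theorem \ref{T5.2}, the prime $\p$ is ramified in $K/k$ exactly when $R(T)\notin k_0$, which is the hypothesis of (c). For the other two cases $R(T)\in k_0$, so the same sentence shows $\p$ is \emph{unramified}; moreover, since $u$ is in reduced form, the condition on $R(T)$ in Theorem \ref{T5.2} forces either $R(T)=0$ or $R(T)\notin f(k_0)=\{\lambda^q-\lambda\mid\lambda\in k_0\}$, so (a) and (b) are precisely the two possibilities when $R(T)\in k_0$. The real content is thus to separate, among unramified behaviors, full decomposition from inertia degree $p$.

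The structural fact I would invoke is that, because $G=\Gal(K/k)\cong C_p^n$ has exponent $p$ and $\p$ is unramified, the decomposition group $D_\p$ of a prime of $K$ above $\p$ is cyclic, hence of order $1$ or $p$, and its order equals the inertia degree. So it suffices to distinguish $|D_\p|=1$ (full decomposition) from $|D_\p|=p$ (inertia degree $p$, with $p^{n-1}$ primes over $\p$). Since the intersection of all index-$p$ subgroups of an elementary abelian $p$-group is trivial, $\p$ is fully decomposed in $K/k$ if and only if it is fully decomposed in every degree-$p$ subextension.

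To read off the behavior in each degree-$p$ subextension I would use Proposition \ref{P3.1}, which for $f(X)=X^q-X$ exhibits them as $k(y_\mu)$ with $y_\mu^p-y_\mu=\mu u$, $\mu\in\f$. As $R(T)\in k_0$ and $\mu\in k_0$, the datum of $\mu u$ at the infinite prime is $\mu R(T)\in k_0$, while the finite terms $\mu Q_i/P_i^{\alpha_i}$ do not affect $\p$. Proposition \ref{P6.1} then gives that $\p$ splits in $k(y_\mu)/k$ if and only if $\mu R(T)\in\im\wp$ (the alternative $\mu R(T)\in k_0\setminus\im\wp$ producing inertia, and $\mu R(T)\notin k_0$ being impossible here). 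Hence $\p$ is fully decomposed in $K/k$ precisely when $\mu R(T)\in\im\wp$ for all $\mu\in\f$.

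At this point Lemma \ref{L6.2'} supplies the decisive step, converting the condition ``$\mu R(T)\in\im\wp$ for all $\mu\in\f$'' into the single equation $R(T)=\lambda^q-\lambda$ for some $\lambda\in k_0$. In case (a), $R(T)=0=0^q-0$ satisfies this and $\p$ is fully decomposed. In case (b), the hypothesis $R(T)\notin\{\lambda^q-\lambda\mid\lambda\in k_0\}$ is exactly the negation, so $\p$ is not fully decomposed; being unramified, it must then have $|D_\p|=p$, i.e.\ inertia degree $p$. The only point needing care, I expect, is the bookkeeping verifying that replacing $u$ by $\mu u$ leaves the infinite-prime datum equal to $\mu R(T)$, so that Proposition \ref{P6.1} applies verbatim; granting that, the corollary is a direct assembly of Theorem \ref{T5.2}, Proposition \ref{P3.1}, Proposition \ref{P6.1}, and Lemma \ref{L6.2'}.
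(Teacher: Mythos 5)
Your proof is correct and takes essentially the same route as the paper: the corollary is there deduced, exactly as you do, from Theorem \ref{T5.2} (ramification at $\p$), the degree-$p$ subextensions $y_\mu^p-y_\mu=\mu u$ of Proposition \ref{P3.1}, Proposition \ref{P6.1}, and Lemma \ref{L6.2'}, this being the special case $f(X)=X^q-X$ of Proposition \ref{P6.2''}. The bookkeeping point you flag is harmless (the finite-prime reductions subtract $\wp(\delta)$ for proper fractions $\delta$, whose partial-fraction decompositions have zero polynomial part, so the datum at $\p$ stays $\mu R(T)$) and is glossed at the same level of detail in the paper.
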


Let $K=k(y)$ be given by (\ref{Eq5.2}) and let $\P_i$ be the prime
divisors associated to $P_i(T)$, $1\leq i\leq r$. Let $\P$
be a prime divisor such that $\P\notin\{\P_1,\ldots,\P_r,\p\}$. 
Then $\P$ is either fully decomposed in $K/k$ or has inertia
degree equal to $p$. The following results establish the
decomposition type of $\P$.

First we study the Artin--Schreier case.

\begin{proposicion}\label{P6.3} Let $\P$ and $K/k$ be as before. 
Let us write equation {\rm{(\ref{Eq6.1})}} as
\[
y^p-y=u(T),
\]
with $u(T)=\frac{g(T)}{h(T)}\in k=k_0(T)$ such that
$\gcd(g(T),h(T))=1$. Let
$P(T)\in R_T^+$ be the irreducible polynomial associated to
$\P$, say $\deg P(T)=m$. Let $\nu\in k'$ be a root of $P(T)$, where
$k'$ is the decomposition field of the polynomial $P(T)$.
Then $\P$ decomposes fully in $K/k$ if and only
 if $u(\nu)\in \wp(k')$.
\end{proposicion}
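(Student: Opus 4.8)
The plan is to reduce the global splitting question to a statement about the residue field at $\P$ and then settle it by the standard dichotomy for separable degree--$p$ polynomials over a finite field. Since $\P\notin\{\P_1,\ldots,\P_r,\p\}$, we have $v_{\P}(u)\geq 0$, so $u\in\o_{\P}$ and the reduction $\bar u:=u\bmod\P$ is a well defined element of the residue field $k(\P)=\o_{\P}/\P$. Because $\gcd(g,h)=1$ forces $P\nmid h$, the canonical isomorphism $k(\P)\cong k_0[T]/(P(T))\cong k'$ sending the class of $T$ to $\nu$ carries $\bar u$ to $g(\nu)/h(\nu)=u(\nu)$. Thus the asserted condition $u(\nu)\in\wp(k')$ is literally $\bar u\in\wp(k(\P))$, and it suffices to prove that $\P$ decomposes fully in $K/k$ if and only if $\bar u\in\wp(k(\P))$.

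Next I would pass to a prime $\eu p$ of $K$ over $\P$ and control the reduction of $y$. Since $\P$ is unramified one checks first that $y$ may be taken integral at $\eu p$: if $v_{\eu p}(y)<0$ then $v_{\eu p}(y^p-y)=pv_{\eu p}(y)<0$, while $v_{\eu p}(u)=e_{\P}v_{\P}(u)\geq 0$, a contradiction. Hence $\bar y:=y\bmod\eu p\in k(\eu p)$ is defined and satisfies $\bar y^{\,p}-\bar y=\bar u$ in $k(\eu p)$. As the extension is unramified, $f(\eu p\mid\P)=[k(\eu p):k(\P)]=[k(\P)(\bar y):k(\P)]$, so the splitting type of $\P$ is governed entirely by the factorization of $X^p-X-\bar u$ over $k(\P)$.

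The heart of the argument is the rigidity of this polynomial. It is separable (its derivative is $-1$), and its roots differ by elements of ${\ma F}_p\subseteq k_0\subseteq k(\P)$; therefore over $k(\P)$ it either splits into distinct linear factors or is irreducible of degree $p$. In the first case a root $\bar y$ lies in $k(\P)$, i.e. $\bar u\in\wp(k(\P))$, and then $f(\eu p\mid\P)=1$; since $K/k$ is Galois this forces $e=f=1$ for every prime above $\P$, so $\P$ splits into $p$ primes and decomposes fully. In the second case $f(\eu p\mid\P)=p$ and $\P$ is inert. Combining with the first paragraph, $\P$ decomposes fully if and only if $\bar u\in\wp(k(\P))$, equivalently $u(\nu)\in\wp(k')$. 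One may equally phrase the lifting of a simple residual root through Hensel's lemma over the completion $k_{\P}\cong k(\P)((\pi))$, or record the equivalent trace form $\Tr_{k'/{\ma F}_p}(u(\nu))=0$ coming from additive Hilbert~90.

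The step I expect to be the main obstacle is the passage from the residue field to the global splitting type: one must ensure that the residue degree is exactly $[k(\P)(\bar y):k(\P)]$ with no loss, which relies on $\P$ being unramified and on $\bar y$ generating the residue extension. Once the integrality $v_{\eu p}(y)\geq 0$ and the identification $\bar u=u(\nu)$ are secured, the finite--field dichotomy for $X^p-X-\bar u$ finishes the proof with no further computation.
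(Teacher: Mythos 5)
Your argument is correct, but it takes a genuinely different route from the paper's. You work locally at $\P$: you check $v_{\P}(u)\geq 0$ and identify $\bar u=u\bmod\P$ with $u(\nu)$ under $k(\P)\cong k_0[T]/(P(T))\cong k'$, show $y$ is integral at a prime $\eu p$ above $\P$, and settle everything by the split-or-irreducible dichotomy for $X^p-X-\bar u$ over the residue field --- in effect Hasse's classical unramified criterion. The paper instead stays global: it passes to the constant field extension $k_m=k'(T)$, using the hypothesis that $K/k$ is geometric (so $k_m\cap K=k$ and full decomposition of $\P$ in $K/k$ is equivalent to full decomposition of a prime $\Q$ above it in $K_m/k_m$), notes that $\Q$ is the degree-one prime attached to $T-\nu$, moves it to the infinite place by the substitution $T'=\frac{1}{T-\nu}$, computes that the reduced form of $u_1(T')=u(T)$ has constant part $u(\nu)$, and then invokes the infinite-prime criterion of Proposition \ref{P6.1}. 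Your route is more elementary and self-contained: it needs neither the geometric hypothesis nor the change-of-variable computation, while the paper's route recycles Proposition \ref{P6.1} and keeps the whole section inside the reduced-form formalism that later feeds Theorem \ref{T6.4}. The one step you assert rather than prove --- that in the split case the residue degree is exactly $[k(\P)(\bar y):k(\P)]$, i.e.\ $f(\eu p\mid\P)=1$ when $\bar y\in k(\P)$ --- is indeed the only delicate point, and the Hensel remark you make does close it: a simple residual root lifts to $k_{\P}$, so $X^p-X-u$ splits into distinct linear factors over $k_{\P}$ and $\P$ has $p$ primes above it. Alternatively, since $\P$ is unramified the decomposition group injects into $\Gal(k(\eu p)/k(\P))$, and a nontrivial $\sigma$ satisfies $\sigma(y)=y+j$ with $j\in{\ma F}_p^{\ast}$, hence $\bar\sigma(\bar y)=\bar y+j\neq\bar y$, so $\bar y\in k(\P)$ forces $f=1$; your inert case ($f\geq p$ from the irreducibility of the residual polynomial, $f\leq p$ from $efg=p$) is already complete as written.
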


\begin{proof} We have $[k':k_0]=m$. Let
$\nu=\nu_1,\ldots,\nu_m$ be the roots of $P$ in $k'$, $P(T)=\prod_{i=1}^m
(T-\nu_i)$. We have that $\P$ decomposes fully in $k'(T):=k_m/k=
k_0(T)$. Here $k_m$ denotes the constant extension of $k$ of degree
$m$. Since we are assuming that the extension $K/k$ is geometric, it
follows that $k_m\cap K=k$ and therefore $\P$ decomposes fully in $K/k$
if and only if $\Q$ decomposes fully in $K_m/k_m$ where $K_m=
Kk_m$ and $\Q$ is a prime in $k_m$ above $\P$.
\[
\xymatrix{K\ar@{-}[rr]\ar@{-}[d]&&K_m\ar@{-}[d]\\
k\ar@{-}[rr]_{\substack{\text{$\P$ decomposes}
\\ \text{fully}}}&&k_m}
\]

Say that $\Q$ is the prime divisor associated to $T-\nu\in k_m$. 
Since $v_{\P}(u(T))\geq 0$ it follows that $v_{\Q}(u(T))\geq0$ so that
$T-\nu\nmid h(T)$ and $h(\nu)\neq 0$. Furthermore $g(\nu)=0\iff v_{\P}(u(T))>0$.
We have $\deg_{k_m} \Q=1$. We make $\Q$ the infinite prime in
$k_m$, that is, let $T'=\frac{1}{T-\nu}$,
$(T')_{k_m}=\frac{\Q'_0}{\Q'_{\infty}}=\frac{\Q_{\infty}}{\Q}$, where $(T)_{k_m}=\frac{
\Q_0}{\Q_{\infty}}$. We have $T=\frac{1}{T'}+\nu$.

Write $u_1(T'):=u(T)=u\big(\frac{1}{T'}+\mu\big)=\frac{g\big(\frac{1}{T'}+\nu\big)}
{h\big(\frac{1}{T'}+\nu\big)}=\frac{g\big(\frac{1}{T'}(1+T'\nu)\big)}
{h\big(\frac{1}{T'}(1+T'\nu)\big)}$.

Let $g(T)=a_sT^s+a_{s-1}T^{s-1}+\cdots+a_1T+a_0$, $a_s\neq 0$, $a_i\in \F$, 
$0\leq i\leq s$; $h(T)=b_tT^t+b_{t-1}T^{t-1}+\cdots+b_1T+b_0$, $b_t\neq 0$, $b_j\in \F$, 
$0\leq j\leq t$. Then
\begin{gather*}
g\Big(\frac{1}{T'}(1+T'\nu)\Big)=\frac{1}{(T')^s}\Big(a_s+\cdots+g(\nu)(T')^s\Big)=
\frac{1}{(T')^s}g_1(T');\\
h\Big(\frac{1}{T'}(1+T'\nu)\Big)=\frac{1}{(T')^t}\Big(b_t+\cdots+h(\nu)(T')^t\Big)=
\frac{1}{(T')^t}h_1(T').
\intertext{It follows that}
\deg_{T'}g_1(T')\leq s;\quad\deg_{T'}h_1(T')=t.
\intertext{Therefore}
\deg_{T'}u_1(T')=\deg_{T'}g_1(T')-s\leq 0,\quad\text{and}\quad v_{\Q}(u_1(T'))=s-\deg_{
T'}(g_1(T'))\geq 0.
\end{gather*}

The reduced form of $u_1(T')$ is
\[
u_1(T')=\sum_{j=1}^{r'}\frac{Q'_j(T')}{(P'_j(T'))^{e'_j}}+u(\nu).
\]
Thus $\Q$ decomposes fully in $K_m/k_m\iff u(\nu)\in 
\wp(k')$. This proves the proposition.
\end{proof}

The general case is consequence of Proposition \ref{P6.3}. Indeed,
let $K=k(y)$ where $y$ is given in a reduced form (\ref{Eq5.2}). We
assume that $\P$ is not ramified in $K/k$. From Theorem \ref{T5.3}, 
using the notation given there, we have that every subextension of
$K$ of degree $p$ over $k$ is given by (\ref{infinito}).

As a consequence of these expressions and from Propositions
\ref{P6.2''} and \ref{P6.3}, we obtain the main result of this section.

\begin{teorema}\label{T6.4} With the above notations, let $\P$ be
a non--ramified in $K/k$ prime divisor of degree $m$. Then we have
that $\P$ decomposes fully in $K/k\iff$ for every hyperplane $\H$ holds
\begin{gather*}
u(\nu)\frac{1}{f_{\H}(\varepsilon_{\H})^p}\in \wp (k'),
\intertext{and $\P$ has inertia degree $p$ in $K/k \iff$ 
there exists an hyperplane $\H$ such that}
u(\nu)\frac{1}{f_{\H}(\varepsilon_{\H})^p}\notin \wp (k').
\end{gather*}

Equivalently, if {\rm{(\ref{Eq*})}} holds, then $\P$ decomposes
fully in $K/k \iff u(\nu)\in f(k')$. \fin
\end{teorema}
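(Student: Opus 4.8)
The plan is to reduce the statement to the degree-$p$ subextensions already analyzed in Proposition \ref{P6.3} and to exploit the group-theoretic structure of $G=\Gal(K/k)\cong\G_f\cong C_p^n$. Since $\P$ is unramified, its decomposition group $D$ is cyclic, so $|D|\in\{1,p\}$, and $\P$ decomposes fully in $K/k$ if and only if $D=\{0\}$, while $\P$ has inertia degree $p$ if and only if $|D|=p$. For a hyperplane $\H<\G_f$, the image of $D$ in the quotient $G/\H$ is trivial exactly when $D\subseteq\H$, so $\P$ decomposes fully in the subextension $K^{\H}/k$ if and only if $D\subseteq\H$. Because the intersection of all hyperplanes of an $\ma F_p$--vector space is $\{0\}$, and because any nonzero $D$ is avoided by some hyperplane, I conclude that $\P$ decomposes fully in $K/k$ if and only if it does so in every degree-$p$ subextension, and that $\P$ has inertia degree $p$ if and only if it fails to decompose fully in at least one of them.

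First I would invoke Theorem \ref{T5.3} to write each degree-$p$ subextension as the Artin--Schreier extension $z_{\H}^p-z_{\H}=u/f_{\H}(\varepsilon_{\H})^p$. Since $f_{\H}(\varepsilon_{\H})\in k_0$ is a nonzero constant, dividing by $f_{\H}(\varepsilon_{\H})^p$ affects neither the ramification at $\P$ nor the value obtained by evaluation at $\nu$; the relevant datum is simply $u(\nu)/f_{\H}(\varepsilon_{\H})^p$. Applying Proposition \ref{P6.3} to this equation --- here the hypothesis $\P\notin\{\P_1,\ldots,\P_r,\p\}$ guarantees that the reduced form has no pole at $\P$, so the evaluation is legitimate --- gives that $\P$ decomposes fully in $K^{\H}/k$ if and only if $u(\nu)/f_{\H}(\varepsilon_{\H})^p\in\wp(k')$. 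Combining this with the reduction of the first paragraph yields the two quantified assertions of the theorem at once.

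For the equivalent formulation under hypothesis (\ref{Eq*}), I would rewrite the membership condition through the twisted operator $\wp_a(c)=c^p-a^{p-1}c=a^p\wp(c/a)$. With $a:=f_{\H}(\varepsilon_{\H})\in k_0\subseteq k'$ one has $\wp_a(k')=a^p\wp(k')$, so $u(\nu)/f_{\H}(\varepsilon_{\H})^p\in\wp(k')$ is equivalent to $u(\nu)\in\wp_{f_{\H}(\varepsilon_{\H})}(k')$. Hence $\P$ decomposes fully if and only if $u(\nu)\in\bigcap_{\H}\im\wp_{f_{\H}(\varepsilon_{\H})}$, the intersection running over all hyperplanes. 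From the factorization $f(X)=\wp_{f_{\H}(\varepsilon_{\H})}(f_{\H}(X))$ established in the proof of Theorem \ref{T5.3} one gets $\im f\subseteq\im\wp_{f_{\H}(\varepsilon_{\H})}$ for each $\H$, hence $\im f\subseteq\bigcap_{\H}\im\wp_{f_{\H}(\varepsilon_{\H})}$. The reverse inclusion is where (\ref{Eq*}) enters: since the intersection over all $\H$ is contained in the one over the distinguished hyperplanes $\H_1,\ldots,\H_n$, the hypothesis forces $\bigcap_{\H}\im\wp_{f_{\H}(\varepsilon_{\H})}\subseteq\bigcap_{i=1}^n\im\wp_{a_i}=\im f$, giving equality and thus $\P$ decomposes fully $\iff u(\nu)\in f(k')$.

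The step I expect to be the main obstacle is the passage from the $n$ distinguished hyperplanes of (\ref{Eq*}) to the intersection over all hyperplanes; this is resolved by the squeezing argument above, which rests essentially on the uniform inclusion $\im f\subseteq\im\wp_{f_{\H}(\varepsilon_{\H})}$ coming from the factorization of $f$ through each $f_{\H}$. A secondary point requiring care is that evaluating the Artin--Schreier data at $\nu$ be well defined, which holds because $\P$ is unramified and distinct from the $\P_i$ and $\p$, so the constant division by $f_{\H}(\varepsilon_{\H})^p$ introduces no pole at $\P$.
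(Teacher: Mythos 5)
Your proof is correct and follows essentially the same route as the paper: reduce to the degree--$p$ subextensions $z_{\H}^p-z_{\H}=u/f_{\H}(\varepsilon_{\H})^p$ of Theorem \ref{T5.3} via the cyclic decomposition group, apply Proposition \ref{P6.3} to each (the constant $f_{\H}(\varepsilon_{\H})^p\in k_0^{\ast}$ being harmless), and invoke (\ref{Eq*}) for the final equivalence. Your squeezing argument $\im f\subseteq\bigcap_{\H}\im\wp_{f_{\H}(\varepsilon_{\H})}\subseteq\bigcap_{i=1}^n\im\wp_{a_i}=\im f$, using the factorization $f=\wp_{f_{\H}(\varepsilon_{\H})}\circ f_{\H}$, makes explicit a point the paper leaves implicit --- namely that (\ref{Eq*}) concerns only the $n$ distinguished hyperplanes while the theorem quantifies over all of them --- which is a welcome refinement rather than a different approach.
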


\section{Generators of elementary abelian $p$--extensions}\label{S7}

Let $k$ be an arbitrary field of characteristic $p>0$ and let
$f(X)\in k[X]$ be an additive polynomial given by (\ref{E2}) such that
$\G_f\subseteq k$. Let $u\in k$ be such that $F(X)=f(X)-u\in k[X]$ 
is irreducible. Let $K=k(y)$ where $f(y)=u$,  $\Gal(K/k)
\cong \G_f$. We have that $K/k$ has $\frac{(p^n-1)(p^n-p)\cdots (p^n-p^{m-1})}
{(p^m-1)(p^m-p)\cdots (p^m-p^{m-1})}$ subextensions $k\subseteq E
\subseteq K$ such that $[E:k]=p^m$.

We want to study the relation between $y$ and $z$ in case
$K=k(y)=k(z)$ and $f(z)=\chi\in k$.
The next result is well known.

\begin{proposicion}\label{P7.0}
If $k(y_1)$ y $k(y_2)$ are cyclic extensions of degree $p$ of $k$
with $\wp (y_i)=x_i\in k$,
$i=1,2$, the following two statements are equivalent:
\l
\item  $k(y_1) =k(y_2)$,
\item there exist $j\in {\ma F}_p^*$ and $z \in k$
such that $y_1= j y_2 + z$ and
 $x_1= j x_2 + \wp (z)$. \hfill{\fin}
\end{list}
\end{proposicion}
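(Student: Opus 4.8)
The plan is to prove the two implications separately, the reverse one being essentially immediate and the forward one resting on the action of a common Galois generator on the two Artin--Schreier generators.

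First I would dispose of (b)$\,\Rightarrow\,$(a). If $y_1=jy_2+z$ with $j\in{\ma F}_p^{\ast}$ and $z\in k$, then $y_1\in k(y_2)$, so $k(y_1)\subseteq k(y_2)$; since both fields have degree $p$ over $k$, they coincide. I would also note that the second relation in (b) is then automatic: applying $\wp$ and using that $\wp$ is additive together with $(jy_2)^p=j^py_2^p=jy_2^p$ (because $j\in{\ma F}_p$, so $j^p=j$), one gets $\wp(y_1)=j\wp(y_2)+\wp(z)$, that is, $x_1=jx_2+\wp(z)$. The same computation shows that, once $j$ and $z$ are fixed, the two displayed relations of (b) are equivalent to each other, so it suffices to produce $j$ and $z$ realizing $y_1=jy_2+z$.

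For (a)$\,\Rightarrow\,$(b) set $L:=k(y_1)=k(y_2)$, a cyclic extension of $k$ of degree $p$, and let $G=\Gal(L/k)=\langle\sigma\rangle$. Since $\Irr(X,y_i,k)=X^p-X-x_i$ has roots $y_i,y_i+1,\ldots,y_i+(p-1)$, there exist $c_1,c_2\in{\ma F}_p$ with $\sigma(y_i)=y_i+c_i$ for $i=1,2$; here $c_i\neq 0$, for otherwise $y_i$ would be fixed by $\sigma$, hence by all of $G$, forcing $y_i\in k$ and contradicting $[k(y_i):k]=p$. Now put $j:=c_1c_2^{-1}\in{\ma F}_p^{\ast}$ and $z:=y_1-jy_2$. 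Then $\sigma(z)=(y_1+c_1)-j(y_2+c_2)=z+(c_1-jc_2)=z$, so $z$ is fixed by $G$ and therefore $z\in L^G=k$. This gives $y_1=jy_2+z$ with $j\in{\ma F}_p^{\ast}$ and $z\in k$, and the relation $x_1=jx_2+\wp(z)$ follows by applying $\wp$ exactly as in the previous paragraph.

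The only genuine content lies in the forward direction, where the correct $j$ must be read off from how the common generator $\sigma$ shifts each $y_i$ by a nonzero constant $c_i\in{\ma F}_p$. Once $\sigma(y_i)=y_i+c_i$ is in place, the choice $j=c_1c_2^{-1}$ is precisely what cancels the additive constants, and the identification $L^G=k$ then finishes the argument; I expect no further obstacle, since every remaining step is a short direct computation.
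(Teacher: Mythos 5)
Your proof is correct; the paper itself offers no proof of this proposition (it is stated as well known, with the box following the statement), and your argument---using that $X^p-X-x_i$ is the minimal polynomial of $y_i$ to write $\sigma(y_i)=y_i+c_i$ with $c_i\in{\ma F}_p^{\ast}$ for a generator $\sigma$ of the common Galois group, setting $j=c_1c_2^{-1}$, and checking that $z=y_1-jy_2$ is fixed by $\sigma$ and hence lies in $k$---is precisely the standard Galois-theoretic route. One harmless imprecision in an aside: for fixed $j$ and $z$ the two relations in (b) are not equivalent, since $x_1=jx_2+\wp(z)$ only forces $y_1=jy_2+z+c$ for some $c\in{\ma F}_p$ (i.e.\ the relation holds after replacing $z$ by $z+c$); but you only use the correct direction, so the proof stands as written.
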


Next theorem generalizes Propostition \ref{P7.0}. 
This is the main result of this section.

\begin{teorema}\label{T7.1} With the above notation we have that
the following statements are equivalent:
\l
\item $k(y)=k(z)$,

\item there exist $A_{n-1},A_{n-2},\ldots, A_1,A_0
\in \G_f$ satisfying 
\begin{gather*}
\elemental A{\beta}n=0\\
\text{with}\quad \beta \in  \G_f \iff \beta=0
\end{gather*}
and $D\in k$ such that
\begin{equation}\label{Eq7.1}
z=\elemental Ayn+D.
\end{equation}
\end{list}
\end{teorema}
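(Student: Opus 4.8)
The plan is to prove the two implications separately, with the substantive work going into $(b)\Rightarrow(a)$ being almost immediate and $(a)\Rightarrow(b)$ requiring a Galois-theoretic identification of the map sending $z$ to a ``twisted'' polynomial in $y$. I first observe that the expression $P(A,y):=\elemental Ayn$ is an \emph{additive} (indeed $\mathbb F_p$-linear in $y$) polynomial whenever $A_{n-1},\ldots,A_0\in\G_f$, since it is an $\mathbb F_p$-combination of the additive maps $y\mapsto y^{p^i}$. The nondegeneracy condition in $(b)$ — that $P(A,\beta)=0$ for $\beta\in\G_f$ forces $\beta=0$ — is precisely the statement that the associated additive operator is injective on $\G_f$, hence an automorphism of $\G_f$; this will be the algebraic fact I keep in reserve.

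For $(b)\Rightarrow(a)$: given the displayed formula $z=P(A,y)+D$ with $D\in k$, it is clear that $z\in k(y)$, so $k(z)\subseteq k(y)$. To get equality I would use Proposition \ref{P2.2}: for $\sigma\in G=\Gal(K/k)$ with $\sigma(y)=y+\xi_\sigma$, additivity of $P(A,-)$ gives $\sigma(z)=P(A,y+\xi_\sigma)+D=z+P(A,\xi_\sigma)$. Thus $\sigma$ fixes $z$ iff $P(A,\xi_\sigma)=0$ iff (by the nondegeneracy hypothesis applied with $\beta=\xi_\sigma\in\G_f$) $\xi_\sigma=0$, i.e.\ $\sigma=\operatorname{Id}$. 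Hence no nontrivial element of $G$ fixes $z$, so $k(z)$ is not contained in any proper subfield, giving $k(z)=k(y)=K$.

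For $(a)\Rightarrow(b)$, suppose $K=k(y)=k(z)$ with $f(z)=\chi\in k$. Since $z\in k(y)=K$ and $K/k$ is elementary abelian, I would track the Galois action: for each $\sigma\in G$, $\sigma(z)-z$ lies in $\G_f$ (as $z$ and $\sigma z$ are roots of $f(X)-\chi$), so the map $\sigma\mapsto\sigma(z)-z$ is an $\mathbb F_p$-linear map $G\to\G_f$, and it is injective because $k(z)=K$ means only $\operatorname{Id}$ fixes $z$. Identifying $G$ with $\G_f$ via $\theta$ of Proposition \ref{P2.2}, this gives an $\mathbb F_p$-linear automorphism $\phi$ of $\G_f$. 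The crux is to realize $\phi$ as evaluation of an additive polynomial of the prescribed shape $\sum_{i=0}^{n-1}A_i X^{p^i}$ with $A_i\in\G_f$: because $\G_f$ is an $n$-dimensional $\mathbb F_p$-space inside $k$ and the Frobenius powers $X,X^p,\ldots,X^{p^{n-1}}$ span (by a Moore-determinant / $\mathbb F_p$-linear-independence argument) all $\mathbb F_p$-linear endomorphisms of $\G_f$, one can solve for the $A_i$ so that $\elemental A{\xi}n=\phi(\xi)$ for all $\xi\in\G_f$. Then $z-\elemental Ayn$ is fixed by every $\sigma\in G$, hence lies in $k$, and calling it $D$ yields (\ref{Eq7.1}); the injectivity of $\phi$ translates back into exactly the nondegeneracy condition of $(b)$.

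The main obstacle is the surjectivity step in $(a)\Rightarrow(b)$: showing that every $\mathbb F_p$-linear endomorphism of $\G_f$ is represented by some $\sum_{i=0}^{n-1}A_i X^{p^i}$ with coefficients \emph{back in $\G_f$}, rather than merely in $k$ or $\bar k$. This is a dimension count — the space of such polynomial operators has $\mathbb F_p$-dimension $n^2$ (an $n$-vector of $\G_f$-coefficients), matching $\dim_{\mathbb F_p}\operatorname{End}_{\mathbb F_p}(\G_f)=n^2$ — together with an injectivity argument showing distinct coefficient tuples give distinct operators on $\G_f$, which is where a Moore/Wronskian-type nonvanishing (the linear independence of $\xi,\xi^p,\ldots,\xi^{p^{n-1}}$ over a basis of $\G_f$) does the work. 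I expect the remaining verifications (additivity, the Galois computations, closing of braces in the display) to be routine once this representation lemma is in place.
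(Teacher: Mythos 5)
Your two directions follow the same mechanism as the paper's. The paper proves Theorem \ref{T7.1} as the case $m=n$ of the more general Theorem \ref{T7.2}: the converse direction is exactly your computation $\sigma(z)=z+l(\xi_\sigma)$ for $l(X)=\sum_{i=0}^{n-1}A_iX^{p^i}$, with nondegeneracy forcing a trivial stabilizer, and the forward direction is carried out, as you propose, by finding $l$ with prescribed values $l(\mu_i)=\gamma_i$ on a basis of $\G_f$, solvability resting on the nonsingularity of the Moore matrix $M$ whose rows are $\big(\mu_j,\mu_j^p,\ldots,\mu_j^{p^{n-1}}\big)$ --- the paper's $\det B(X)$ computation is precisely your ``Moore/Wronskian-type nonvanishing''. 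Up to that point the proposal and the paper agree.

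The genuine gap is in your representation lemma and its dimension count. You claim the operators $\xi\mapsto\sum_{i=0}^{n-1}A_i\xi^{p^i}$ with $A_i\in\G_f$ form an $n^2$-dimensional space matching $\dim_{{\ma F}_p}\operatorname{End}_{{\ma F}_p}(\G_f)=n^2$, whence surjectivity. But these operators need not map $\G_f$ into $\G_f$ at all: $\G_f$ is merely an ${\ma F}_p$-subspace of $k$, not closed under multiplication, so there is no reason that $\sum_i A_i\xi^{p^i}\in\G_f$ when $A_i,\xi\in\G_f$. Already for $n=1$, take $f(X)=X^p-\alpha^{p-1}X$ with $\alpha\in k\setminus{\ma F}_p$, so that $\G_f={\ma F}_p\,\alpha$: then $A_0\xi\in{\ma F}_p\,\alpha^2\not\subseteq{\ma F}_p\,\alpha$. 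Your two ``$n^2$-dimensional spaces'' therefore live in different ambient spaces ($\operatorname{Maps}(\G_f,k)$ versus $\operatorname{End}_{{\ma F}_p}(\G_f)$), and equality of dimensions proves nothing. What the Moore system genuinely yields --- and this is what the paper records in (\ref{Eq7.7}) --- is a unique solution with $A_i\in k$; membership in $\G_f$ can actually fail: with the same $f$ and $p>2$, $z=2y$ satisfies $k(z)=k(y)$ and $f(z)=2u\in k$, yet its unique representation $z=A_0y+D$ has $A_0=2\notin\G_f$, while the nondegeneracy condition $2\beta=0\iff\beta=0$ holds perfectly well. So the lemma you keep ``in reserve'' is false for a general additive $f$; it is correct in the flagship case $f(X)=X^q-X$, where $\G_f={\ma F}_q$ is a field and $M^{-1}$ has entries in ${\ma F}_q$. (In fairness, the paper's own proof is silent on this same point --- it solves (\ref{Eq7.7}) without checking that the solution lies in $\G_f^n$ --- but the dimension count is the step of \emph{your} argument that breaks; to repair it, either allow $A_i\in k$ in statement (b), keeping the nondegeneracy quantified over $\beta\in\G_f$, or restrict to $\G_f={\ma F}_q$.)
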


Theorem \ref{T7.1} is an immediate consequence of the
following more general theorem.

\begin{teorema}\label{T7.2} Let $K=k(y)$. Then the 
following statements are equivalent:
\l
\item $E=k(z)$ with $k\subseteq E\subseteq K$,
$[E:k]=p^m$ such that $g(z)=\chi\in k$ for some
$\chi\in k$ and for some additive polynomial $g(X)$ such that
$g(X)\mid f(X)$, that is, $g=f_V$ for an additive subgroup $V$ of
$\G_f$ of dimension $m$ over ${\ma F}_p$,

\item
there exist $A_{n-1}, A_{n-2}, \ldots,A_1,A_0\in \G_f$, 
$C\in k$  and a ${\ma F}_p$--vector subspace
$\H$ of $\G_f$ of dimension $n-m$ such that
\las
\item $z$ satisfies
\begin{gather}\label{Eq7.2}
z=\elemental Ayn +C,
\end{gather}

\item
for $\beta\in \G_f$,
\[
\elemental A{\beta}n=0\iff \beta\in \H.
\]
\end{list}
\end{list}

The relation between $E$ and  {\rm{(\ref{Eq7.2})}} is given as
follows. If $\H=\L\{\mu_{m+1},\cdots,\mu_{n}\}$ where $\{\mu_1,\ldots,
\mu_n\}$ is the basis of $\G_f$ such that if $G=\langle \sigma_1,
\ldots,\sigma_n\rangle$, $\sigma_i (y)=y+\mu_i$, $1\leq i\leq n$, 
then $E$ is the fixed field by the subgroup $H:=\langle \sigma_{m+1},
\ldots,\sigma_{n}\rangle$ of $G$. That is, $H$ corresponds to
$\H$ under the isomorphism given in Proposition {\rm{\ref{P2.2}}}
and if $\G_f=\H\oplus V$, $g(X)=f_V(X)=\prod_{\delta\in 
V}(X-\delta)\mid f(X)$.
\end{teorema}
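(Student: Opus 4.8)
The plan is to prove the two implications (a)$\Rightarrow$(b) and (b)$\Rightarrow$(a) separately, exploiting the monomorphism $\theta\colon G\to\G_f$ from Proposition \ref{P2.2} to translate the field-theoretic condition ``$E=k(z)$ with $[E:k]=p^m$'' into the purely linear-algebraic condition on the kernel of an additive polynomial. The central observation is that the expression $\elemental Ayn$ appearing in (\ref{Eq7.2}) is itself the evaluation at $y$ of an additive polynomial $h(X):=\elemental AXn$, and that the action of $G$ on such expressions is completely controlled by additivity together with $\sigma(y)=y+\xi_\sigma$ for $\xi_\sigma\in\G_f$.

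\medskip

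For the direction (b)$\Rightarrow$(a), I would start from the given data $A_{n-1},\dots,A_0\in\G_f$, $C\in k$, and the subspace $\H$ of dimension $n-m$, and set $z:=\elemental Ayn+C=h(y)+C$. First I would compute, for a generator $\sigma_\beta\in G$ with $\sigma_\beta(y)=y+\beta$, the effect
\[
\sigma_\beta(z)=h(y+\beta)+C=h(y)+h(\beta)+C=z+h(\beta),
\]
using that $h$ is additive; here $h(\beta)=\elemental A{\beta}n$. By condition (2), $\sigma_\beta$ fixes $z$ precisely when $\beta\in\H$, so the stabilizer of $z$ in $G$ is exactly the subgroup $H$ corresponding to $\H$ under $\theta$. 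Since $|H|=p^{n-m}$, the field $k(z)$ is the fixed field $K^{H}$ and hence $[k(z):k]=p^m$, giving $E:=k(z)$. To produce the additive polynomial $g$ with $g(z)=\chi\in k$ and $g\mid f$, I would write $\G_f=\H\oplus V$ with $\dim V=m$, set $g:=f_V$, and verify that $g(z)\in k$ by checking it is $G$-invariant: every $\sigma_\beta$ sends $z$ to $z+h(\beta)$ with $h(\beta)\in\H=\theta(H)$, and since the roots of $g=f_V$ form the coset structure making $g$ constant on cosets of $V$ — more precisely $g(z+\delta)=g(z)$ for $\delta\in V$ and $h(\beta)$ ranges over $\theta(H)\subseteq$ the appropriate complement — one concludes $\sigma_\beta(g(z))=g(z)$ for all $\beta$, so $g(z)\in K^{G}=k$.

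\medskip

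For the harder direction (a)$\Rightarrow$(b), suppose $E=k(z)$ with $[E:k]=p^m$ and $g(z)=\chi\in k$ for $g=f_V$, $\dim V=m$. The field $E$ is fixed by a subgroup $H\le G$ of index $p^m$, i.e.\ $|H|=p^{n-m}$; let $\H:=\theta(H)\subseteq\G_f$, a subspace of dimension $n-m$. The goal is to realize $z$ in the explicit additive form (\ref{Eq7.2}). The key idea is that $z-z_0$, where $z_0$ is any element known to generate $E$ in additive normal form, lies in $k$, together with the fact that $K=k(y)$ means every element of $K$ is a polynomial in $y$; but the decisive constraint is that $z$ is a \emph{Galois eigenvector} in the sense that $\sigma(z)-z\in\G_f$ for all $\sigma\in G$ (because $\sigma(z)$ is a conjugate root of $\Irr(z,k)$, which has the form $g(X)-\chi$ translated by $\G_f/V$). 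From $\sigma_\beta(z)-z\in\G_f$ for all $\beta$, and $K=k(y)$ with $y$ transcendental-like behavior under $G$, I would argue that the map $\beta\mapsto\sigma_\beta(z)-z$ is an $\ma F_p$-linear map $\G_f\to\G_f$ vanishing exactly on $\H$. Any such $\ma F_p$-linear endomorphism of $\G_f$ is induced by a $p$-polynomial, so there exist $A_{n-1},\dots,A_0\in\G_f$ with $\sigma_\beta(z)-z=\elemental A{\beta}n$ and kernel exactly $\H$; this yields condition (2). Setting $h(X):=\elemental AXn$, the element $z-h(y)$ is then $G$-invariant, hence lies in $k$, so $z=h(y)+C$ with $C:=z-h(y)\in k$, establishing (\ref{Eq7.2}).

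\medskip

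The main obstacle I anticipate is the step in (a)$\Rightarrow$(b) asserting that \emph{every} $\ma F_p$-linear endomorphism of $\G_f$ with prescribed kernel $\H$ arises as $\beta\mapsto\elemental A{\beta}n$ for suitable $A_i\in\G_f$, and that the coefficients can be taken in $\G_f$ rather than merely in $\bar k$. This requires a dimension/counting argument: the $\ma F_p$-linear maps $\G_f\to\G_f$ form a space of dimension $n^2$ over $\ma F_p$, while the $p$-polynomials $\sum_{i=0}^{n-1}A_iX^{p^i}$ with $A_i\in\G_f$ and coefficients reduced modulo $f$ also give a space of the correct dimension, since $\G_f\cong\ma F_{p^n}$ as an $\ma F_p$-space and the Frobenius twists $X\mapsto X^{p^i}$ restricted to $\G_f$ span all of $\operatorname{End}_{\ma F_p}(\G_f)$ by the normal basis theorem / linearized-polynomial theory. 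Verifying that this representation is faithful and surjective onto endomorphisms is the technical heart; once it is in place, the identification of the kernel with $\H$ and the descent $z-h(y)\in k$ are formal consequences of Galois theory. I would therefore carry out the counting argument first and treat it as the crux, relegating the field-theoretic conclusions to routine verification.
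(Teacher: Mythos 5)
Your outline follows the same skeleton as the paper's proof --- both directions turn on the cocycle computation $\sigma_\beta(z)=z+h(\beta)$ for $h(X)=A_{n-1}X^{p^{n-1}}+\cdots+A_1X^p+A_0X$, Galois descent for $z-h(y)$, and, in (a)$\Rightarrow$(b), realizing the linear map $\beta\mapsto\sigma_\beta(z)-z$ by a $p$--polynomial --- but two steps do not hold up as written. First, in (b)$\Rightarrow$(a) your assertion that $\sigma_\beta$ sends $z$ to $z+h(\beta)$ with $h(\beta)\in\H$ is false: by condition (2), $h$ vanishes \emph{exactly} on $\H$, so for $\beta\notin\H$ the element $h(\beta)$ is a nonzero vector of $\im\big(h|_{\G_f}\big)$. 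Consequently $g=f_V$ for an \emph{arbitrary} complement $V$ of $\H$ does not work: $\sigma_\beta(g(z))=g(z)+f_V(h(\beta))$, and $f_V(h(\beta))=0$ demands $h(\beta)\in V$. You must take the specific complement $V:=\im\big(h|_{\G_f}\big)=\L\{h(\mu_1),\ldots,h(\mu_m)\}$, of dimension $m$ precisely because $\ker\big(h|_{\G_f}\big)=\H$; this is exactly the paper's choice $V=\L\{\xi_1,\ldots,\xi_m\}$ with $\xi_i=l(\mu_i)$.

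Second, in (a)$\Rightarrow$(b) your reduction is sound (the map $c(\beta)=\sigma_\beta(z)-z$ is $\ma F_p$--linear, takes values in $V$ because $\Irr(X,z,k)=g(X)-\chi$, and has kernel $\H$), but the crux you flag is ``resolved'' by an appeal that does not apply: linearized-polynomial theory and the normal basis theorem over $\ma F_{p^n}$ presuppose that $\G_f$ is the finite field $\ma F_{p^n}$, whereas in this paper $\G_f$ is merely an $n$--dimensional $\ma F_p$--subspace of an arbitrary field $k$ of characteristic $p$, closed under neither multiplication nor Frobenius; $\operatorname{End}_{\ma F_p}(\G_f)$ is not even the natural target, since the values $\sum_i A_i\beta^{p^i}$ land in $k$, so your $n^2$--dimension count has no footing. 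The missing ingredient --- the actual technical heart of the paper's proof --- is the nonsingularity over $k$ of the Moore matrix $M=\big(\mu_i^{p^{j-1}}\big)$ for any $\ma F_p$--independent $\mu_1,\ldots,\mu_n\in k$: the paper proves $\det M\neq 0$ by showing $\det B(X)$ is an additive polynomial of degree $p^{n-1}$ whose full root set is $\L\{\mu_2,\ldots,\mu_n\}$, a set avoiding $\mu_1$, and then solves $l(\mu_i)=\gamma_i$ (with $\gamma_i=0$ for $i>m$) uniquely, with coefficients $A_i\in k$. Your suspicion that forcing $A_i\in\G_f$ is problematic is well founded and cannot be repaired by counting: already for $n=1$, $f(X)=X^p-aX$ with $a\neq 1$ and $z=y$, the $k$--linear independence of $1,y$ forces $A_0=1\notin\G_f$; the containments $A_i\in\G_f$ and $\im h\subseteq\G_f$ (the latter also needed for your $f_V\mid f$) are genuinely available only in the case $f(X)=X^q-X$, $\G_f=\F$, where $M^{-1}$ has entries in $\F$ --- and indeed the paper's own proof, passing through $M^{-1}$, likewise only produces $A_i\in k$.
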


\begin{proof}
Let $G=\Gal(K/k)=\langle \sigma_1,\ldots, \sigma_n\rangle$ with
$\sigma_i(y)=y+\mu_i$, where 
$\mu_i\in\G_f$ and $\{\mu_1,\ldots,\mu_n\}$ is a
$\ma F_p$--basis of $\G_f$. More precisely, we have that if
$\sigma_1,\ldots,\sigma_n\in G$ with $\sigma_i(y)=y+\mu_i$, 
then $G=\langle\sigma_1,\ldots,
\sigma_n\rangle\iff \{\mu_1,\ldots,\mu_n\}$ is a basis of
 $\G_f/{\ma F}_p$. Note that for $0\leq \alpha_i\leq
p-1$, $1\leq i\leq n$, $\sigma=\sigma_1^{\alpha_1}\cdots\sigma_n^{
\alpha_n}$ we have $\sigma(y)=y+\sum_{i=1}^n\alpha_i\mu_i$.

First we consider a subfield $E$ of $K$ of degree $p^m$ over $k$.
We may choose a set of $n$ generators of $G=\Gal(K/k)=
\langle\sigma_1,\ldots,\sigma_n\rangle$ in such a way that
$E=K^{\langle\sigma_{m+1},\ldots,\sigma_n\rangle}$ 
is the fixed field under
$H=\langle\sigma_{m+1},\ldots,\sigma_n\rangle$. We have
$\Gal(K/E)= \langle\sigma_{m+1},\ldots,\sigma_n\rangle$.

Let $\theta\colon G\to\G_f$ be the isomorphism given by $\sigma_i
\mapsto \mu_i$, $1\leq i\leq n$. Let
$\H=\theta(H)<\G_f$ and let $V$ be an arbitrary section of the exact 
sequence
\[
0\lra \H\stackrel{i}{\lra}\G_f\stackrel{\pi}{\lra}\G_f/\H\lra 0,
\]
that is, $V=\varphi(\G_f/\H)<\G_f$ where $\varphi\colon \G_f/\H
\lra\G_f$ satisfies $\pi\circ\varphi =\Id_{\G_f/\H}$. We have $\G_f\cong
\H\oplus V$ as ${\ma F}_p$--vector spaces. 

From Theorem \ref{T5.4}, we have that there exist $z\in E$ and $\chi\in k$ 
such that $E=k(z)$ with $f_V(z)=\chi\in k$ and $f_V(X)=\prod_{\delta\in V}
(X-\delta)\mid \prod_{\delta\in\G_f}(X-\delta)=f(X)$.

We have that $\Gal(E/k)\cong\langle\bar{\sigma}_1,\ldots,\bar{\sigma}_m
\rangle$ where $\bar{\sigma}_i=\sigma_i|_E$ or, equivalently,
 $\bar{\sigma}_i=
\sigma_i\bmod \Gal(K/E)$. 

Now, let $\sigma_i(z)=z+\gamma_i$, $1\leq i\leq m$, where 
$\{\gamma_1,\ldots,\gamma_m\}$ is a basis of $V$ and
$\sigma_j(z)=z$ for $m+1\leq j\leq n$. We denote
$\gamma_j=0$ for $m+1\leq j\leq n$.

Let $A_{n-1},A_{n-1},\ldots, A_1, A_0\in \G_f$ be arbitrary and let
\begin{equation}\label{Eq7.4}
w:=\elemental Ayn.
\end{equation}
That is, if we denote $l(X)=\elementalito AXn$, then $w=l(y)$.

We will prove that there exist $A_{n-1},A_{n-2},\ldots, A_1, A_0
\in \G_f$ and $D\in k$ such that 
\begin{equation}\label{Eq7.3}
z=w+D.
\end{equation}

We have
\begin{equation}\label{Eq7.5}
\sigma_i(w)=\sigma_i(l(y))=l(\sigma_i(y))=l(y+\mu_i)=l(y)+l(\mu_i)=w+l(\mu_i).
\end{equation}

It follows that
\begin{gather}
\sigma_i(w)=w+\gamma_i,\quad 1\leq i\leq n\iff l(\mu_i)=\gamma_i,
\quad 1\leq i\leq n \label{Eq7.6}\\
\iff M\left[\begin{array}{c}A_0\\A_1\\ \vdots\\ A_{n-2}\\
A_{n-1}\end{array}\right]=\left[\begin{array}{c}\gamma_1\\ \gamma_2\\ \vdots\\ 
\gamma_{n-1}\\ \gamma_n\end{array}\right]=
\left[\begin{array}{c}\gamma_1\\ \vdots\\ 
\gamma_{m}\\0 \\ \vdots \\ 0\end{array}\right], \nonumber
\end{gather}
where $M$ is the matrix
\[
M=\matriz{\mu_1}\mu.
\]

We will prove that $M$ is non--singular. Let
\[
B(X):=\matriz X\mu=\left[\begin{array}{c}F(X)\\F(\mu_2)\\ \vdots\\ 
F(\mu_{n-1})\\F(\mu_n)\end{array}\right]
\]
where $F(Z):=[Z\quad Z^p\ \cdots\  Z^{p^{n-1}}\quad Z^{p^n}]$ 
with $Z\in\{X,\mu_2,\ldots,\mu_n\}$ denotes
the rows of $B(X)$. We have
$B(\mu_1)=M$ and $\det B(X)$ is an additive polynomial in
$k[X]$ of degree $p^{n-1}$.

Let $\{i_2,\ldots,i_n\}\subseteq \ma F_p^{n-1}$ and $\xi=i_2\mu_2+
\cdots+i_n\mu_n$. Then:
\[
B(\xi)=\left[\begin{array}{c}F(i_2\mu_2+\cdots+i_n\mu_n)\\F(\mu_2)\\ \vdots\\ 
F(\mu_{n-1})\\F(\mu_n)\end{array}\right]=
\left[\begin{array}{c}i_2F(\mu_2)+\cdots+i_nF(\mu_n)\\F(\mu_2)\\ \vdots\\ 
F(\mu_{n-1})\\F(\mu_n)\end{array}\right].
\]
Therefore $\det B(\xi)=0$ for every $\xi\in\{i_2\mu_2+\cdots +
i_n\mu_n\mid i_2,\ldots,i_n\in\ma F_p\}=C$. Since $\{\mu_2,\ldots,
\mu_n\}$ is a linearly independent set over ${\ma F}_p$,
it follows that $|C|=p^{n-1}=\deg B(X)$. In this way, we obtain that
$C$ is the set of roots of $\det B(X)$. In particular, since
$\mu_1\notin C$, $\det B(\mu_1)=\det M\neq 0$ and $M$ is non--singular.

Hence (\ref{Eq7.6}) has a unique solution:
\begin{equation}\label{Eq7.7}
\left[\begin{array}{c}A_0\\A_1\\ \vdots\\ A_{n-2}\\
A_{n-1}\end{array}\right]=M^{-1}\left[\begin{array}{c}\gamma_1\\ \vdots\\ 
\gamma_{m}\\0 \\ \vdots \\ 0\end{array}\right].
\end{equation}

Let $\beta=\sum_{i=1}^n c_i\mu_i\in \G_f$ with $c_i\in{\ma F}_p$,
$1\leq i\leq n$.
Therefore 
\[
l(\beta)=l(\sum_{i=1}^nc_i\mu_i)=\sum_{i=1}^nl(c_i\mu_i)=
\sum_{i=1}^nc_il(\mu_i)=\sum_{i=1}^nc_i\gamma_i=\sum_{i=1}^mc_i\gamma_i.
\]
It follows that $l(\beta)=0\iff c_1=\ldots=c_m=0\iff \beta\in\L\{\mu_{m+1},
\ldots,\mu_n\}=\H$.

Finally, we have $\sigma_i(z-w)=z-w$ for all $1\leq i\leq n$, so that
$z-w=D\in k$ and $z$ is in the form (\ref{Eq7.2}).

To prove the converse, let $z$ be given by (\ref{Eq7.2}), $z=l(y)+D$. Then
\[
\sigma_i(z)=\sigma_i(l(y)+D)=l(\sigma_i(y))+D=l(y+\mu_i)+D=
l(y)+l(\mu_i)+D=z+l(\mu_i)
\]
and we have $l(\mu_i)=0\iff i\geq m+1$. Therefore $k(z)\subseteq K^{\langle
\sigma_{m+1},\ldots,\sigma_n\rangle}$. Now, for any $c_1,\ldots,
c_m\in {\ma F}_p$, not all equal to zero, $\sigma_1^{c_1}\cdots\sigma_m^{c_m}(z)
=z+l(\beta)$ with $\beta=\sum_{i=1}^m c_i\mu_i\neq 0$, $l(\beta)\neq 0$. This
implies that $[k(z):k]\geq p^m$.
It follows that $[k(z):k]=p^m$ and that $k(z)=K^{\langle \sigma_{m+1},\ldots,
\sigma_n\rangle}$.

Let $\xi_i:=l(\mu_i)$, $1\leq i\leq m$, $V=\L\{\xi_1,\ldots,\xi_m\}$ and
 $f_V(X)\mid f(X)$. Then
\begin{gather*}
f_V(z)=f_V(l(y)+D)=f_V(\sum_{i=0}^{n-1}A_iy^{p^i}+D)=
\sum_{i=0}^{n-1} f_V(A_iy^{p^i})+f_V(D)=\chi.
\end{gather*}

We will prove that $\chi=f_V(z)\in k$.

If $\sigma:=\sigma_1^{c_1}\cdots\sigma_n^{c_n}\in G=
\Gal(K/k)$ then, for $\mu:=\sum_{i=1}^n c_i\mu_i$, $\sigma(y)=y+\mu$, 
we have
\begin{align*}
\sigma(f_V(z))&=\sigma(f_V(l(y)+D))=f_V(l(y+\mu)+D)=f_V(l(y)+l(\mu)+D)\\
&=f_V(l(y)+D)+f_V(l(\mu))=f_V(z)+f_V(l(\mu)).
\end{align*}
Finally,
\[
l(\mu)=\sum_{i=1}^n c_il(\mu_i)=\sum_{i=1}^m c_i\gamma_i\in V,
\]
so that $f_V(l(\mu))=0$ and $\sigma(f_V(z))=f_V(z)$ for all $\sigma\in
\Gal(K/k)$. It follows that $\chi=f_V(z)\in k$.
This finishes the proof.
\end{proof}

\section{Multicyclic extensions}\label{S8}

In this section we are interested in abelian extensions with Galois
group isomorphic to $\Big({\ma Z}/p^m{\ma Z}\Big)^n$. 
To this end, we generalize the results of previous sections and
consider extensions using the Witt ring.
First we fix some notation that will be used in this part of the paper.
For details, the main sources are the papers of Witt
\cite{Wit36-2} and of Schmid \cite{Sch36}. A summary can be
found in \cite[Cap\'itulo 11]{RzeVil2014}.

Given a commutative ring with identity $R$,
$W_m(R)$ denotes the ring of Witt vectors over $R$ of length $m$, that
is, $\vec \alpha=\Wittc \alpha m\in W_m(R)$ means that
$\alpha_i\in R$, $1\leq i\leq m$ and that $\alpha^{(i)}=
\alpha_1^{p^{i-1}}+p\alpha_2^{
p^{i-2}}+\cdots + p^{i-1} \alpha_i$ 
are the ghost components of $\vec \alpha$. Witt operations are denoted
by $\Witt +, \Witt -, \Witt \cdot$. Let $\vec 1=\big(1,0,\ldots,0\big)$ 
and for $t\in{\ma N}$, $\vec t=
{\underbrace{\vec 1\Witt + \cdots  \Witt +\vec 1}_{t\text{\ times}}}$.
We have $\vec p^j\Witt \cdot \vec 1=\big(\underbrace{0,0,\ldots,0}_{j}, 
\underbracket[0pt]{1}_{\substack{\uparrow\\ j+1}},0,\ldots, 0\big)$.
Furthermore $W_m({\ma F}_p)\cong {\ma Z}/p^m{\ma Z}$. 

For $u\in R$ we write $\{u\}:=\big(u,0,\ldots,0\mid 
u, u^p,\ldots, u^{p^{m-1}}\big)$.
For a vector $\vec x=\v x m \big)\in W_m(R)$, we define
$\vec x^p:=\big(x_1^p,\ldots, x_m^p\big)$. Note that
$\vec x^p$ is not the $p$--power of $\vec x$ with Witt
multiplication, that is, $\vec x^p\neq \underbrace{\vec x\Witt \cdot
\cdots \Witt \cdot \vec x}_{p}$. The Artin--Schreier--Witt operator
is defined by $\wp(\vec x):=\vec x^p\Witt -\vec x$. 

Let $k$ be a field of characteristic $p$. Then
$\wp(\vec x)=\vec 0\iff \vec x\in W_m({\ma F}_p)\subseteq W_m(k)$.
We have $(\vec x\Witt \circ \vec y)^p= \vec x^p\Witt \circ \vec y^p$
where $\circ\in\{+,-,\cdot\}$.
Let $K/k$ be a cyclic extension of degree $p^m$. Here we write,
for $\vec y=\v y m\big)$, $\sigma \vec  y=\v {\sigma y}m\big)$.
Let $G=\Gal(K/k)=\langle\sigma\rangle$,
$o(\sigma)=p^m$. Now $\vec 1\in W_m(K)$ satisfies 
\[
\Tr_{K/k}\vec 1= \Witt \sum_{\sigma\in G}\sigma \vec 1
=\vec p^m\Witt\cdot \vec 1=\vec p^m=\vec 0.
\]
Thus there exists $\vec y\in W_m(K)$ such that $(\sigma\Witt -\vec 1)
(\vec y)=\vec 1$, that is, $\sigma\vec y=\vec y\Witt +\vec 1$. 

Let $\vec x:=\wp \vec y=\vec y^p\Witt - \vec y$. Then
\[
\sigma(\wp \vec y)=\wp (\sigma \vec y)=\wp (\vec y\Witt + \vec 1)=
\wp(\vec y)\Witt +\wp(\vec 1)=\wp(\vec y),
\]
so, we obtain that $\vec x\in W_m(k)$. Now, $\sigma(\vec y)=
\vec y\Witt + \vec 1$ where $k(\vec y):=k\v ym\big)\subseteq K$
and $k(\vec y)/k$ is a cyclic extension of degree $p^m$ 
because $\sigma^t\vec y=\vec y\Witt +\vec t$ and
$p^m$ is minimum satisfying $\vec p^m=\vec 0$. 
Therefore $K=k(\vec y)$.

In general, if $k$ is a field of characteristic $p$, the vector
$\vec y \in W_m(k)$ is invertible $\iff y_1\neq 0$.

The following Proposition generalizes Proposition \ref{P7.0}.

\begin{proposicion}\label{P8.2}
If $k(\vec y_1)$ and $k(\vec y_2)$ are cyclic extensions of degree
$p^m$ of $k$ with $\vec y_1,\vec y_2\in W_m(K)$
and $\wp (\vec y_i)=\vec x_i\in W_m(k)$,
$i=1,2$, then the following statements are equivalent: 
\l
\item  $k(\vec y_1) =k(\vec y_2)$,
\item there exist $\vec j\in W_m({\ma F}_p)$ invertible, that is,
$\mcd (j,p)=1$, and $\vec z\in W_m(k)$ such that $\vec y_1=\vec j\Witt \cdot
\vec y_2\Witt + \vec z$ and $\vec x_1=\vec j\Witt \cdot \vec x_2\Witt +
\wp (\vec z)$. \hfill \fin
\end{list}
\end{proposicion}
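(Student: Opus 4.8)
The plan is to prove the two implications directly, relying on the Artin--Schreier--Witt dictionary recalled just above the statement: the operator $\wp$ is additive for $\Witt +$ and $\Witt -$ (because $(\vec a\Witt +\vec b)^p=\vec a^p\Witt +\vec b^p$), it satisfies $\wp(\vec c)=\vec 0$ exactly when $\vec c\in W_m({\ma F}_p)$, and $W_m({\ma F}_p)\cong{\ma Z}/p^m{\ma Z}$. A recurring tool will be the identity $\vec j^p=\vec j$ valid for any $\vec j$ whose components lie in ${\ma F}_p$, which gives $\wp(\vec j\Witt\cdot\vec y)=\vec j\Witt\cdot\wp(\vec y)$.

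For the easy direction, assume (b). Since $\vec j\in W_m({\ma F}_p)\subseteq W_m(k)$ and $\vec z\in W_m(k)$, the components of $\vec y_1=\vec j\Witt\cdot\vec y_2\Witt +\vec z$ all lie in $k(\vec y_2)$, so $k(\vec y_1)\subseteq k(\vec y_2)$; invertibility of $\vec j$ lets me solve $\vec y_2=\vec j^{-1}\Witt\cdot(\vec y_1\Witt -\vec z)$, giving the reverse inclusion, hence $k(\vec y_1)=k(\vec y_2)$. The relation between the $\vec x_i$ is then formal: applying $\wp$ to $\vec y_1=\vec j\Witt\cdot\vec y_2\Witt +\vec z$ and using additivity of $\wp$ together with $\wp(\vec j\Witt\cdot\vec y_2)=\vec j\Witt\cdot\wp(\vec y_2)$ yields $\vec x_1=\vec j\Witt\cdot\vec x_2\Witt +\wp(\vec z)$.

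The substance is in (a)$\Rightarrow$(b). Set $K=k(\vec y_1)=k(\vec y_2)$ and fix a generator $\sigma$ of $\Gal(K/k)\cong{\ma Z}/p^m{\ma Z}$. For each $i$ I would put $\vec c_i:=\sigma\vec y_i\Witt -\vec y_i$ and compute $\wp(\vec c_i)=\wp(\sigma\vec y_i)\Witt -\wp(\vec y_i)=\sigma\vec x_i\Witt -\vec x_i=\vec 0$, the last equality because $\vec x_i\in W_m(k)$ is fixed by $\sigma$. Hence $\vec c_i\in W_m({\ma F}_p)$ and $\sigma\vec y_i=\vec y_i\Witt +\vec c_i$. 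Since $\sigma^t\vec y_i=\vec y_i\Witt +t\,\vec c_i$ and $[k(\vec y_i):k]=p^m$, the additive order of $\vec c_i$ in $W_m({\ma F}_p)\cong{\ma Z}/p^m{\ma Z}$ is exactly $p^m$, so each $\vec c_i$ is a unit. I then set $\vec j:=\vec c_1\Witt\cdot\vec c_2^{-1}\in W_m({\ma F}_p)$, an invertible vector with $\vec c_1=\vec j\Witt\cdot\vec c_2$.

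Finally I would form $\vec z:=\vec y_1\Witt -\vec j\Witt\cdot\vec y_2$ and check $\sigma$--invariance: because $\vec j$ has ${\ma F}_p$--entries, $\sigma$ commutes with $\vec j\Witt\cdot(-)$, so $\sigma\vec z=(\vec y_1\Witt +\vec c_1)\Witt -\vec j\Witt\cdot(\vec y_2\Witt +\vec c_2)=\vec z\Witt +(\vec c_1\Witt -\vec j\Witt\cdot\vec c_2)=\vec z$. Thus $\vec z\in W_m(k)$, $\vec y_1=\vec j\Witt\cdot\vec y_2\Witt +\vec z$, and applying $\wp$ exactly as in the easy direction gives $\vec x_1=\vec j\Witt\cdot\vec x_2\Witt +\wp(\vec z)$. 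The main obstacle I anticipate is purely the bookkeeping that justifies commuting $\sigma$ and $\wp$ past multiplication by $\vec j$; both reductions rest on $\vec j^p=\vec j$, which must be invoked with care since $\vec x^p$ here denotes the componentwise $p$--power, not the Witt $p$--power.
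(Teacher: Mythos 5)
Your proof is correct, and there is in fact nothing in the paper to compare it against: Proposition \ref{P8.2} is stated with the end-of-proof mark and no argument, as the Witt--vector analogue of Proposition \ref{P7.0} (itself introduced only as ``well known''), with the references to Witt and Schmid standing in for a proof. Your write-up supplies the omitted argument, and it does so by precisely the mechanism the surrounding text of Section \ref{S8} sets up: the paragraph before the proposition already records the facts you lean on, namely that $(\vec x\Witt\circ\vec y)^p=\vec x^p\Witt\circ\vec y^p$ (so $\wp$ is additive and commutes with multiplication by vectors with ${\ma F}_p$--entries via $\vec j^p=\vec j$), that $\wp(\vec c)=\vec 0\iff\vec c\in W_m({\ma F}_p)$, and that invertibility in $W_m$ is detected by the first component. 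Each step of your (a)$\Rightarrow$(b) direction checks out: $\wp(\vec c_i)=\sigma\vec x_i\Witt-\vec x_i=\vec 0$ puts the cocycle $\vec c_i=\sigma\vec y_i\Witt-\vec y_i$ in $W_m({\ma F}_p)$; the order computation $\sigma^t\vec y_i=\vec y_i\Witt+\vec t\Witt\cdot\vec c_i$ together with $K=k(\vec y_i)$ (so $\sigma^t$ fixing all components of $\vec y_i$ forces $\sigma^t=\Id$) correctly makes $\vec c_i$ an element of additive order $p^m$, hence a unit of $W_m({\ma F}_p)\cong{\ma Z}/p^m{\ma Z}$; and $\vec z=\vec y_1\Witt-\vec j\Witt\cdot\vec y_2$ lands in $W_m(k)$ because it is fixed by the generator $\sigma$, which fixes $\vec j$ since its entries lie in ${\ma F}_p\subseteq k$. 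Your closing caution about $\vec x^p$ denoting the componentwise Frobenius rather than the Witt $p$--th power is exactly the point on which the identity $\wp(\vec j\Witt\cdot\vec y)=\vec j\Witt\cdot\wp(\vec y)$ rests, and you invoke it correctly in both directions; the only implicit ingredient worth making explicit is that $\sigma$ commutes with all Witt operations because these are given by universal integral polynomials in the components, which justifies $\wp(\sigma\vec y_i)=\sigma\wp(\vec y_i)$.
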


Let $q=p^n$ and consider a field $k$ such that $\F\subseteq k$. 
Let $\vec x^q:=\big(x^q_1,\ldots, x^q_m
\big)$. Then $\vec x^q\Witt - \vec x=0\iff
\vec x\in W_m(\F)\subseteq W_m(k)$. 
The ring $W_m(\F)$ is known as
a {\em Galois ring}. As group, we see that
$W_m(\F)$ is a free $W_m({\ma F}_p)$--module of rank $n$.
In particular, $W_m(\F)\cong\Big({\ma Z}/p^m{\ma Z}\Big)^n$
as groups.

\begin{proposicion}\label{P8.1} We have that $W_m(\F)$ 
is a free $W_m({\ma F}_p)$--module of rank $n$, where $q=p^n$.
More precisely, let $\{\mu_1,\ldots,\mu_n\}$ be a basis of
$\F$ over ${\ma F}_p$ and set $\vec \mu_i:=\{\mu_i\}=
\big(\mu_i,0,\ldots,0\big)$, $1\leq i\leq n$. Then $\{\vec \mu_1,
\ldots,\vec \mu_n\}$ is a $W_m({\ma F}_p)$--basis of $W_m(\F)$.
That is
\[
W_m(\F)=\bigoplus_{i=1}^n W_m({\ma F}_p) \cdot \vec \mu_i.
\]
\end{proposicion}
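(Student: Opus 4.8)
The plan is to combine the local structure of $W_m(\ma F_p)$, a reduction modulo $\vec p$, and a cardinality count. First I would record that, by the isomorphism $W_m(\ma F_p)\cong \ma Z/p^m\ma Z$ noted above (under which $\vec 1\mapsto 1$ and $\vec p\mapsto p$), the ring $W_m(\ma F_p)$ is local with maximal ideal $\mathfrak m=(\vec p)=\vec p\,W_m(\ma F_p)$ and residue field $\ma F_p$. Since $W_m(\F)$ is a $W_m(\ma F_p)$--algebra through the inclusion $W_m(\ma F_p)\subseteq W_m(\F)$, it is in particular a finite, hence finitely generated, $W_m(\ma F_p)$--module, and $\mathfrak m\,W_m(\F)=\vec p\,W_m(\F)$.

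The key step is to identify the reduction of $W_m(\F)$ modulo $\mathfrak m$. The first--coordinate map $\pi_1\colon W_m(\F)\to\F$, $\pi_1(\vec x)=x_1$, is a surjective ring homomorphism, because the first Witt component of a sum (resp.\ a product) is the sum (resp.\ product) of the first components; its kernel is the set of vectors with vanishing first coordinate. Using the standard characteristic--$p$ Witt identity
\[
\vec p\Witt\cdot\vec x=(0,x_1^p,\ldots,x_{m-1}^p)
\]
(multiplication by $p$ is the Verschiebung of the Frobenius; see \cite{Wit36-2,Sch36} or \cite[Cap\'itulo 11]{RzeVil2014}) together with the perfectness of $\F$, I would observe that as $\vec x$ ranges over $W_m(\F)$ the right--hand side ranges over \emph{all} vectors with first coordinate $0$. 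Hence $\vec p\,W_m(\F)=\ker\pi_1$, and therefore $\pi_1$ induces an isomorphism of $\ma F_p$--vector spaces
\[
W_m(\F)/\mathfrak m\,W_m(\F)\cong \F,
\]
under which $\vec\mu_i=\{\mu_i\}=(\mu_i,0,\ldots,0)$ maps to $\mu_i$.

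To conclude, since $\{\mu_1,\ldots,\mu_n\}$ is an $\ma F_p$--basis of $\F$, the residues of $\vec\mu_1,\ldots,\vec\mu_n$ form a basis of $W_m(\F)/\mathfrak m\,W_m(\F)$; by Nakayama's lemma the elements $\vec\mu_1,\ldots,\vec\mu_n$ then generate $W_m(\F)$ as a $W_m(\ma F_p)$--module. This yields a surjective $W_m(\ma F_p)$--module homomorphism
\[
\Phi\colon W_m(\ma F_p)^n\lra W_m(\F),\qquad
\Phi(\vec c_1,\ldots,\vec c_n)=\Witt\sum_{i=1}^n\vec c_i\Witt\cdot\vec\mu_i.
\]
Finally I would compare cardinalities: $|W_m(\ma F_p)^n|=(p^m)^n=p^{mn}=q^m=|W_m(\F)|$, so the surjection $\Phi$ between finite sets of equal size is a bijection. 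Thus $\{\vec\mu_1,\ldots,\vec\mu_n\}$ is a $W_m(\ma F_p)$--basis and $W_m(\F)=\bigoplus_{i=1}^n W_m(\ma F_p)\cdot\vec\mu_i$ is free of rank $n$.

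The main obstacle is the reduction step, namely verifying cleanly that $\vec p\,W_m(\F)$ coincides with the augmentation kernel $\ker\pi_1$; this rests on the characteristic--$p$ identity for multiplication by $p$ and on the perfectness of $\F$ (so that the Frobenius is surjective). Everything after this---Nakayama's lemma and the counting argument---is routine.
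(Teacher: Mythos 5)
Your proof is correct, but it takes a genuinely different route from the paper's. The paper's own proof is a direct, component-by-component computation: from $\Witt\sum_{i=1}^n\vec\alpha_i\Witt\cdot\vec\mu_i=\vec 0$ it reads off the first component to get $\sum_{i=1}^n\alpha_{i1}\mu_i=0$, hence $\alpha_{i1}=0$ for all $i$ by the ${\ma F}_p$--independence of the $\mu_i$; it then extracts the second coordinate from the second ghost component (formally dividing by $p$ and using injectivity of $x\mapsto x^p$) to get $\alpha_{i2}=0$, and continues inductively, establishing $W_m({\ma F}_p)$--linear independence directly, after which spanning follows from the count $|W_m({\ma F}_p)^n|=p^{mn}=q^m=|W_m(\F)|$, left implicit in ``the result follows''. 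You argue in the opposite order: generation first, via Nakayama's lemma over the local ring $W_m({\ma F}_p)\cong{\ma Z}/p^m{\ma Z}$, after identifying $W_m(\F)/\vec p\,W_m(\F)\cong\F$ through the first--coordinate projection $\pi_1$ and the identity $\vec p\Witt\cdot\vec x=(0,x_1^p,\ldots,x_{m-1}^p)$ (this is $p=VF$ in characteristic $p$, with Frobenius surjective by perfectness of $\F$); injectivity then comes from the same cardinality count applied to your surjection $\Phi$. Your one delicate step, $\vec p\,W_m(\F)=\ker\pi_1$, is justified exactly as you say, so the argument is complete. As for what each approach buys: the paper's computation is elementary and self-contained, using only the ghost components already introduced in Section 8, whereas your route avoids ghost-component manipulations altogether (in particular the division by $p$, which in characteristic $p$ must be read as an identity of universal polynomials) and scales better --- since Nakayama only sees residues modulo $\vec p$, your argument yields at no extra cost the generalization recorded in Remark \ref{O8.1'}, namely that $\{\vec\xi_1,\ldots,\vec\xi_n\}$ is a $W_m({\ma F}_p)$--basis of $W_m(\F)$ if and only if the first coordinates $\{\xi_{11},\ldots,\xi_{n1}\}$ form an ${\ma F}_p$--basis of $\F$, a fact whose inductive verification the paper leaves to the reader.
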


\begin{proof}
Let $\vec \alpha_1,\ldots, \vec\alpha_n\in W_m({\ma F}_p)$,
with $\vec \alpha_i=\v{\alpha_i} m\mid \vWitt{\alpha_i} m$, 
$\vec \mu_i=\big(\mu_i,0\ldots,0\mid
\mu_i,\mu_i^p,\ldots,\mu_i^{p^{m-1}}\big)$. We have that
$\vec \alpha_i\Witt\cdot \vec\mu_i=\big(\alpha_{i1}\mu_i,\ldots,?\mid
\alpha_{i1}\mu_i,\ldots, ?\big)$. If $\Witt \sum_{i=1}^n
\vec\alpha_i\vec\mu_i=
\big(\ldots\mid \sum_{i=1}^n \alpha_{i1}\mu_i,\ldots\big)
=\vec 0=\big(0,\ldots,0\mid 0,\ldots, 0\big)$, 
we obtain that $\sum_{i=1}^n
\alpha_{i1}\mu_i=0$ which implies $\alpha_{i1}=0$ for all
$1\leq i\leq n$.

We obtain $\vec \alpha_i=(0,\alpha_{i2},\ldots,\alpha_{im}\mid
0,p\alpha_{i2},\ldots)$. Therefore, the second ghost component of
 $\Witt\sum_{i=1}^n \vec\alpha_i\Witt\cdot \vec\mu_i=\vec 0$ is
$\vec 0^{(2)}=\sum_{i=1}^n p\alpha_{i2}\mu_i^p$. Thus
\[
0=\vec 0_2=\frac{1}{p}\big(\vec 0^{(2)}-\vec 0_1^p\big)=
\sum_{i=1}^n\alpha_{i2}\mu_i^p=\sum_{i=1}^n\alpha_{i2}^p\mu_i^p=
\Big(\sum_{i=1}^n\alpha_{i2}\mu_i\Big)^p=0.
\]
Hence $\sum_{i=1}^n\alpha_{i2}\mu_i=0$ so that
$\alpha_{i2}=0$ for all $1\leq i\leq n$. Keeping on with this
procedure, we obtain $\vec \alpha_i=\vec 0$
and the result follows.
\end{proof}

\begin{observacion}\label{O8.1'}{\rm{In general 
we have that $\{\vec \xi_1,\ldots,\vec 
\xi_n\}$ is a basis of
$W_m(\F)$ over $W_m({\ma F}_p)$ if and only if
$\{\xi_{11},
\ldots,\xi_{n1}\}$ is a basis of $\F$ sobre ${\ma F}_p$. 
This can be proved
following the proof of Proposition \ref{P8.1} and noting that
if $\vec\alpha_1,\ldots,\vec\alpha_n\in W_m({\ma F}_p)$, then
$\Witt \sum_{i=1}^n \vec\alpha_i \Witt\cdot \vec\xi_i=\big(
\sum_{i=1}^n \alpha_{i1}\xi_{i1},\ldots \big)$, etc. 
We leave the details of the proof to the interested reader.
}}
\end{observacion}

We consider the equation $\vec y^q\Witt - \vec y=\vec \alpha$ where
$\vec \alpha\in W_m(k)$. Let $\vec y_0\in W_m(\bar{k})$ be a solution
of $\vec y^q\Witt - \vec y=\vec\alpha$, where $\bar{k}$
denotes an algebraic closure of $k$. Note that if
$K=k(\vec y)=k\v ym\big)$ is a cyclic extension of degree $p^m$ over
$k$, then $y_m\notin k\v y{m-1}\big)$ since otherwise we would have that
$[K:k]=[k\v y{m-1}\big):k]\leq p^{m-1}$. So, $K=k(y_m)$. The set of roots
of $\vec y^q\Witt -\vec y=\vec \alpha$ is the set $\{\vec y_0\Witt +
\vec \mu\}_{\vec \mu\in W_m(\F)}$. Let $K=k(\vec y_0)$. Then, because
$\F\subseteq k$, we have $W_m(\F)\subseteq W_m(k)$ and therefore
$K/k$ is a normal extension. Since $|W_m(\F)|=q^m=p^{nm}$, all the
roots of $\vec y^q\Witt -\vec y=\vec \alpha$ are different and therefore
$K/k$ is a Galois extension.

Let $G:=\Gal(K/k)$ and $\sigma\in G$. Then $\vec y_0$ and $\sigma
(\vec y_0)$ are conjugate so there exists $\vec \xi\in W_m(\F)$ such that
$\sigma (\vec y_0)=\vec y_0\Witt + \vec\xi$. Put $\sigma_{\vec \xi}=
\sigma$.

\begin{proposicion}\label{P8.3} With the above notation, we
have that $\varphi\colon G\lra W_m(\F)$, given by
$\varphi(\sigma_{\xi})=\xi$ is a group monomorphism. 
This implies that the extension $K/k$
is abelian and that $G\subseteq W_m(\F)\cong
\Big({\ma Z}/p^m{\ma Z}\Big)^n$. Therefore
$G\cong{\ma Z}/p^{a_1}{\ma Z}\times\cdots\times
{\ma Z}/p^{a_n}{\ma Z}$ with
$m\geq a_1\geq a_2\geq \cdots\geq a_n\geq 0$. \fin
\end{proposicion}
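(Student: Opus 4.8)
The plan is to follow verbatim the argument used for $\theta$ in Proposition \ref{P2.2}, replacing ordinary addition in $\G_f$ by Witt addition in $W_m(\F)$. Concretely, I would first show that $\varphi$ is a homomorphism from $G$ into the additive group $(W_m(\F),\Witt +)$, then that it is injective, and finally read off the stated group--theoretic structure from the fact that $(W_m(\F),\Witt +)\cong\big({\ma Z}/p^m{\ma Z}\big)^n$.

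For the homomorphism property I would take $\sigma=\sigma_{\vec\xi}$ and $\tau=\sigma_{\vec\eta}$ in $G$, so that $\sigma(\vec y_0)=\vec y_0\Witt +\vec\xi$ and $\tau(\vec y_0)=\vec y_0\Witt +\vec\eta$. The key observation is that each $k$--automorphism acts componentwise on Witt vectors and commutes with the Witt operations, because these operations are given by universal polynomials with coefficients in the prime field; moreover $\sigma$ fixes $W_m(\F)$ pointwise, since $\F\subseteq k$ forces $W_m(\F)\subseteq W_m(k)$ and $\sigma|_k=\Id$. Hence
\[
(\sigma\tau)(\vec y_0)=\sigma\big(\vec y_0\Witt +\vec\eta\big)
=\sigma(\vec y_0)\Witt +\sigma(\vec\eta)
=\big(\vec y_0\Witt +\vec\xi\big)\Witt +\vec\eta
=\vec y_0\Witt +\big(\vec\xi\Witt +\vec\eta\big),
\]
so $\varphi(\sigma\tau)=\vec\xi\Witt +\vec\eta=\varphi(\sigma)\Witt +\varphi(\tau)$. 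Since Witt addition is commutative, this already exhibits $G$ as isomorphic to a subgroup of the abelian group $(W_m(\F),\Witt +)$, whence $K/k$ is abelian. Injectivity is then immediate: if $\varphi(\sigma)=\vec 0$ then $\sigma(\vec y_0)=\vec y_0$, so $\sigma$ fixes $K=k(\vec y_0)$ and $\sigma=\Id$.

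Having established that $\varphi$ is a monomorphism, I would regard $G\subseteq W_m(\F)\cong\big({\ma Z}/p^m{\ma Z}\big)^n$ and invoke the structure theorem for finite abelian $p$--groups to write $G\cong\bigoplus_i {\ma Z}/p^{a_i}{\ma Z}$. Each $a_i\leq m$ because every element of the ambient group is annihilated by $p^m$, and the number of nontrivial factors is at most $n$ because the $p$--torsion subgroup of $G$ embeds in the $p$--torsion of $\big({\ma Z}/p^m{\ma Z}\big)^n$, which is an ${\ma F}_p$--space of dimension $n$; padding with trivial factors and ordering decreasingly yields $m\geq a_1\geq\cdots\geq a_n\geq 0$.

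The only point demanding genuine care is the interplay used in the homomorphism step: one must be certain that $\sigma$, a priori merely a field automorphism acting coordinatewise, respects $\Witt +$ and fixes $W_m(\F)$. This is exactly where the hypotheses $\F\subseteq k$ and the ${\ma Z}$--integrality of the Witt addition polynomials are essential; everything else, namely injectivity and the invariant--factor count, is purely formal.
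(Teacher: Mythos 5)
Your proof is correct, and it is exactly the argument the paper intends: Proposition \ref{P8.3} is stated with the proof omitted (the \fin in the statement), being the Witt--vector analogue of Proposition \ref{P2.2}, and your write-up carries out that analogy faithfully --- including the two points the paper leaves tacit, namely that automorphisms commute with the Witt operations because these are given by polynomials over the prime field, and that $\F\subseteq k$ forces $\sigma$ to fix $W_m(\F)$ pointwise. The concluding structure-theorem step ($a_i\leq m$ from the exponent, at most $n$ factors from the $p$--torsion rank) is likewise the standard reading of $G\subseteq\big({\ma Z}/p^m{\ma Z}\big)^n$ that the paper asserts without elaboration.
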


Conversely, let $K=k(z)$ be a finite abelian $p$--extension of
exponent $m$ and rank $n$, that is 
$G\cong{\ma Z}/p^{a_1}{\ma Z}\times\cdots\times
 {\ma Z}/p^{a_n}{\ma Z}$ with
$m\geq a_1\geq a_2\geq \cdots\geq a_n\geq 1$ 
and we assume that $\F\subseteq k$ where
$q=p^n$. Let $K=k(z_1,\ldots,z_n)$ with
$\Gal(k(z_i)/k)\cong {\ma Z}/p^{a_i}{\ma Z}$, $1\leq i\leq n$.
Then, there exists $\vec y_i$ such that
$k(z_i)=k(\vec y_i)$ with $\vec y_i^p\Witt -\vec y_i=\vec \alpha_i\in
W_m(k)$, $\vec y_i\in W_m(K)$, where we write $\vec y_i=\big(
\underbrace{0,\ldots,0}_{m-a_i},y_{i,m-a_i+1},\ldots, y_{i,m}\big)$, 
that is, we complete with zeros the components to make the
vectors of length $m$.

Let $G=\langle\sigma_1,\ldots,\sigma_n\rangle$ with
$\sigma_j(\vec y_i)=\begin{cases}\vec y_i\Witt +\vec 1&\text{if $i=j$}\\
\vec y_i&\text{if $i\neq j$}\end{cases}$, and $o(\sigma_i)=p^{a_i}$.
We define
\[
\vec y:=\vec \xi_1\Witt \cdot \vec y_1\Witt +\cdots \Witt +\vec\xi_n
\Witt \cdot \vec y_n,
\]
where $\{\vec\xi_1,\ldots,\vec\xi_n\}$ is a basis of $W_m(\F)$
over $W_m({\ma F}_p)$. Then $\vec y_i\in W_m(K)$, 
$\vec \xi_i\in W_m(\F)\subseteq W_m(K)$.
It follows that $k(\vec y)\subseteq K$. We will 
show that $k(\vec y)=K$.

Let $\sigma\in G$, say $\sigma=
\sigma_1^{b_1}\cdots \sigma_n^{b_n}$,
$0\leq b_i\leq a_i-1$, $1\leq i\leq n$. Then,
writing $\vec b_i=\{b_i\}$,
\begin{align*}
\sigma\vec y&=\sigma\Big(\Witt \sum_{i=1}^n\big(
\vec \xi_i\Witt \cdot
\vec y_i\big)\Big)=\Witt \sum_{i=1}^n\sigma\big(
\vec\xi_i\Witt \cdot \vec y_i\big)\\
&\underbracket[0pt]{=}_{\substack{\uparrow\\
 \vec \xi_i\in W_m(\F)}}
\Witt \sum_{i=1}^n\vec \xi_i\Witt \cdot \sigma(\vec y_i)=\Witt \sum_{i
=1}^n\vec \xi_i\Witt\cdot (\vec y_i\Witt +\vec b_i)\\
&=\Witt \sum_{i=1}^n\vec \xi_i\Witt \cdot \vec y_i\Witt +
\Witt \sum_{i=1}^n \vec b_i\Witt \cdot \vec\xi_i =\vec y\Witt + \vec \xi,
\end{align*}
where $\vec \xi:=
\Witt\sum_{i=1}^n\vec b_i\Witt\cdot \vec \xi_i\in
 W_m(\F)$. Therefore
$\sigma\vec y=\vec y\iff \vec \xi=\vec 0\iff 
\vec b_1=\cdots=\vec b_n=
\vec 0\iff b_1=\cdots=b_n=0\iff \sigma=\Id$. 
It follows that $K=k(\vec y)$.

In brief, we have:

\begin{teorema}\label{T8.4} Let $k$ be a 
field of characteristic
$p>0$ such that $\F\subseteq k$ with $q=p^n$. Let
$\vec\alpha\in W_m(k)$. If
$K=k(\vec y_0)$ where $\vec y_0$ is a
root of $\vec y^q\Witt -\vec y=\vec \alpha\in 
W_m(k)$, then $K/k$ is an abelian $p$--extension
of exponent $p^h$ with $h\leq m$ and rank
$l$ with $l\leq n$. 
Furthermore, if $G:=\Gal(K/k)$, we have that
$G$ is isomorphic in a natural way to a
subgroup of the Galois ring $W_m(\F)$.

Conversely, if $K=k(\vec y_0)$ is an abelian $p$--extension
of exponent $p^h$ and rank $l$, then $\vec y_0$ is root of
some equation of the form $\vec y^q\Witt -\vec y=
\vec \alpha$ for some $\vec \alpha\in W_m(k)$. \fin
\end{teorema}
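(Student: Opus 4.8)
The plan is to read Theorem \ref{T8.4} as a summary of the two constructions carried out above, so the proof amounts to assembling them and supplying the one computation not yet made explicit, namely that the combined generator satisfies a $q$--equation rather than merely the $p$--equations of its pieces.

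\emph{Forward direction.} First I would invoke the discussion preceding Proposition \ref{P8.3}: the roots of $\vec y^q\Witt -\vec y=\vec\alpha$ form the set $\{\vec y_0\Witt +\vec\mu\}_{\vec\mu\in W_m(\F)}$, these $q^m=p^{nm}$ roots are distinct, and since $\F\subseteq k$ gives $W_m(\F)\subseteq W_m(k)$, the extension $K=k(\vec y_0)$ is Galois. Proposition \ref{P8.3} then furnishes the monomorphism $\varphi\colon G\lra W_m(\F)$, so $G$ embeds in $W_m(\F)\cong\big(\ma Z/p^m\ma Z\big)^n$. To finish I would record two elementary facts about a subgroup $H$ of $\big(\ma Z/p^m\ma Z\big)^n$: its exponent divides the ambient exponent $p^m$, whence exponent $p^h$ with $h\leq m$; and its rank equals $\dim_{\ma F_p}H[p]\leq\dim_{\ma F_p}\big(\ma Z/p^m\ma Z\big)^n[p]=n$, whence rank $l\leq n$.

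\emph{Converse direction.} Here I would take the construction displayed before the theorem verbatim: decompose $G\cong\ma Z/p^{a_1}\ma Z\times\cdots\times\ma Z/p^{a_n}\ma Z$, write $K=k(z_1,\ldots,z_n)$, and apply Artin--Schreier--Witt theory to each cyclic factor to produce $\vec y_i\in W_m(K)$ with $k(z_i)=k(\vec y_i)$ and $\wp(\vec y_i)=\vec y_i^p\Witt -\vec y_i=\vec\alpha_i\in W_m(k)$, padded with zeros to length $m$. With $\{\vec\xi_1,\ldots,\vec\xi_n\}$ a $W_m(\ma F_p)$--basis of $W_m(\F)$ (Proposition \ref{P8.1}), set $\vec y:=\Witt\sum_{i=1}^n\vec\xi_i\Witt\cdot\vec y_i$. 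The Galois--action computation already carried out gives $\sigma\vec y=\vec y\Witt +\Witt\sum_{i}\vec b_i\Witt\cdot\vec\xi_i$ for $\sigma=\sigma_1^{b_1}\cdots\sigma_n^{b_n}$, and hence $K=k(\vec y)$.

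\emph{The remaining point}, which I expect to be the crux, is to verify that this $\vec y$ actually solves a $q$--equation with right--hand side in $W_m(k)$. Using the identity $(\vec x\Witt\circ\vec y)^p=\vec x^p\Witt\circ\vec y^p$ ($\circ\in\{+,-,\cdot\}$) recorded above, iterated $n$ times, the componentwise $q$--power is a ring endomorphism commuting with the Witt operations, and $\vec\xi_i\in W_m(\F)$ obeys $\vec\xi_i^q=\vec\xi_i$; therefore
\[
\vec y^q\Witt -\vec y=\Witt\sum_{i=1}^n\vec\xi_i\Witt\cdot\big(\vec y_i^q\Witt -\vec y_i\big).
\]
For each $i$ the telescoping
\[
\vec y_i^q\Witt -\vec y_i=\Witt\sum_{j=0}^{n-1}\big(\vec y_i^{p^{j+1}}\Witt -\vec y_i^{p^j}\big)=\Witt\sum_{j=0}^{n-1}\vec\alpha_i^{p^j}
\]
(using $\vec\alpha_i=\vec y_i^p\Witt -\vec y_i$) exhibits $\vec y_i^q\Witt -\vec y_i$ as a Witt sum of $p$--power conjugates of $\vec\alpha_i\in W_m(k)$, hence as an element of $W_m(k)$ since $k$ is closed under $p$--th powers. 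Thus $\vec\alpha:=\vec y^q\Witt -\vec y\in W_m(k)$ and $\vec y_0:=\vec y$ is the required generator. The main obstacle is precisely keeping this Frobenius telescoping honest inside the Witt ring, i.e.\ justifying that componentwise $p$--power is an endomorphism for which $\vec x^{p^n}\Witt -\vec x$ factors through the $p$--data $\wp(\vec x)$; once that is secured the rest is bookkeeping, and the two directions combine into the stated equivalence.
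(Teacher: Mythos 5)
Your proposal is correct and follows essentially the same route as the paper, whose ``proof'' is exactly the discussion preceding the statement: the forward direction via the root set $\{\vec y_0\Witt +\vec \mu\}_{\vec \mu\in W_m(\F)}$ and the embedding of Proposition \ref{P8.3}, the converse via the generator $\vec y=\Witt\sum_{i=1}^n\vec \xi_i\Witt\cdot \vec y_i$ built from a $W_m({\ma F}_p)$--basis as in Proposition \ref{P8.1}. The one step you supply explicitly --- the telescoping $\vec y_i^q\Witt -\vec y_i=\Witt\sum_{j=0}^{n-1}\vec\alpha_i^{p^j}$ showing $\vec y^q\Witt -\vec y\in W_m(k)$ --- is sound and fills a point the paper leaves implicit; the paper's shorter alternative would be Galois invariance, namely $\sigma(\vec y^q\Witt -\vec y)=\vec y^q\Witt -\vec y$ because $\vec\xi_{\sigma}^q=\vec\xi_{\sigma}$ for $\vec\xi_{\sigma}\in W_m(\F)$, exactly as done for the cyclic case at the start of Section \ref{S8}.
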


Now consider a finite abelian $p$--extension $K/k$ given by
$\vec y^q\Witt -\vec y=\vec\alpha\in W_m(k)$ 
and where we assume that $\F\subseteq k$. Say that
$G=\Gal(K/k)\cong \prod_{i=1}^n{\ma Z}/
p^{a_i}{\ma Z}$ with $m\geq a_1\geq a_2\geq 
\cdots\geq a_n\geq 0$. Let $\vec \xi\in W_m(\F)$ and let
\begin{gather}\label{Eq**}
\vec y_{\vec\xi}:=(\vec \xi^{p^{n-1}}\Witt \cdot \vec y^{p^{n-1}})\Witt +
(\vec \xi^{p^{n-2}}\Witt \cdot \vec y^{p^{n-2}})\Witt +\cdots \Witt +
(\vec \xi^{p}\Witt \cdot \vec y^{p})\Witt +(\vec\xi\Witt\cdot \vec y).
\end{gather}
Then
\[
\vec y_{\vec\xi}^p\Witt -\vec y_{\vec\xi}=\vec \xi\Witt\cdot \vec \alpha,
\]
that is, $k(\vec y_{\vec\xi})/k$ is a cyclic extension of
degree $p^h$ with $h\leq m$.

From now on, we will assume that $G:=\Gal(K/k)=
\langle\sigma_1,\ldots,\sigma_n\rangle \cong \Big(
{\ma Z}/p^m{\ma Z}\big)^n$ with $o(\sigma_i)=p^m$ for all 
$1\leq i\leq n$. The group $G$ has 
$\frac{q^m-q^{m-1}}{p^m-p^{m-1}}$ distinct
cyclic subgroups of order $p^m$. In particular, we must have
$\alpha_1\neq 0$. We want to verify that among all
the extensions $k(\vec y_{\vec\xi})$ are all the
cyclic subextensions of degree $p^m$. 
Under the isomorphism $G\cong W_m(\F)$, 
we consider $\sigma_i:=\sigma_{\vec \xi_i}$, 
$1\leq i\leq n$ where $\{\xi_1,\ldots,\xi_n\}$ 
is a basis of $\F$ sobre ${\ma F}_p$ and where
we recall that $\vec\xi_i=\{\xi_i\}=(\xi_i,0,\ldots, 0)$.
In fact we note that $\{\xi_1,\ldots,\xi_n\}$ 
is a basis $\F$ over ${\ma F}_p$ if and only if $G=
\langle\sigma_1,\ldots,\sigma_n\rangle$.

It is easy to see that for $\sigma_{\vec\delta}\in G$
we have $\sigma_{\vec\delta}(\vec y_{\vec\xi}) =
\vec y_{\vec\xi}\Witt +\Witt \sum_{i=0}^{n-1}
(\vec \xi\Witt \cdot \vec \delta)^{p^i}$. Then
$\sigma_{\vec\delta}(\vec y_{\vec\xi})=\vec y_{\vec\xi}\iff
\Witt \sum_{i=0}^{n-1} (\vec \xi\Witt \cdot \vec \delta)^{p^i}=\vec 0$.

In general, let $g_{\vec \xi}(\vec\delta):=\Witt \sum_{i=0}^{n-1}
(\vec \xi\Witt \cdot \vec \delta)^{p^i}$. Then
$g_{\vec \xi}(\vec \delta)^p\Witt - g_{\vec \xi}(\vec \delta)=\vec 0$,
that is, $g_{\vec \xi}(\vec \delta)\in W_m({\ma F}_p)$. The map
$g_{\vec\xi}\colon W_m(\F)\lra W_m({\ma F}_p)$ 
is not surjective in general and we have
\[
\frac{W_m(\F)}{\ker g_{\vec \xi}}\cong \im g_{\vec\xi}\subseteq W_m(
{\ma F}_p),\quad |W_m({\ma F}_p)|=p^m,
\]
so that $|\ker g_{\vec \xi}|\geq \frac{|W_m(\F)|}{|W_m({\ma F}_p)|}=
\frac{q^m}{p^m}$.

Observe $k(\vec y_{\vec\xi})$ is the fixed field of $K$ by
$\ker g_{\vec \xi}$.

\begin{proposicion}\label{P8.5} With the notation as above
and $\vec\xi=(\xi_1,\ldots,\xi_m)$,
we have that $[k(\vec y_{\vec\xi}):k]=p^m\iff \vec
\xi$ is invertible in $W_m(\F) \iff \xi_1\neq 0$.
\end{proposicion}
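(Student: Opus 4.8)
The plan is to reduce the statement to the surjectivity of a Witt trace map, which in turn reduces to the classical surjectivity of $\Tr_{\F/\F_p}$. First I would record that, since $k(\vec y_{\vec\xi})=K^{\ker g_{\vec\xi}}$ and $G\cong W_m(\F)$ under $\varphi$, Galois theory together with the first isomorphism theorem applied to the additive homomorphism $g_{\vec\xi}$ gives $[k(\vec y_{\vec\xi}):k]=[G:\ker g_{\vec\xi}]=|\im g_{\vec\xi}|$. That $g_{\vec\xi}$ is additive follows from distributivity of $\Witt\cdot$ over $\Witt +$ and the identity $(\vec a\Witt +\vec b)^{p^i}=\vec a^{p^i}\Witt +\vec b^{p^i}$ (iterating $(\vec x\Witt \circ\vec y)^p=\vec x^p\Witt \circ\vec y^p$), so $\im g_{\vec\xi}$ is an additive subgroup of $W_m({\ma F}_p)\cong {\ma Z}/p^m{\ma Z}$. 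Since every value $g_{\vec\xi}(\vec\delta)$ lies in $W_m({\ma F}_p)$ (it is annihilated by $\wp$) and $|W_m({\ma F}_p)|=p^m$, we conclude $[k(\vec y_{\vec\xi}):k]=p^m$ if and only if $g_{\vec\xi}$ is surjective onto $W_m({\ma F}_p)$. The equivalence $\vec\xi\text{ invertible}\iff\xi_1\neq0$ is the invertibility criterion already recorded before the statement, so it remains to link surjectivity of $g_{\vec\xi}$ to $\xi_1\neq0$.

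For the easy implication, suppose $\xi_1=0$. The first Witt component of $\vec\xi\Witt \cdot\vec\delta$ is $\xi_1\delta_1=0$, hence the first component of each $(\vec\xi\Witt \cdot\vec\delta)^{p^i}$ vanishes, and therefore so does the first component of $g_{\vec\xi}(\vec\delta)$ for every $\vec\delta$. Thus $\im g_{\vec\xi}$ is contained in the unique maximal subgroup of the cyclic group $W_m({\ma F}_p)$, namely the set of vectors with vanishing first component, which has order $p^{m-1}$. Consequently $[k(\vec y_{\vec\xi}):k]\leq p^{m-1}<p^m$.

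For the converse, suppose $\xi_1\neq0$, so that $\vec\xi$ is invertible in $W_m(\F)$. Then Witt multiplication by $\vec\xi$ is an automorphism of $W_m(\F)$, and writing $h(\vec\eta):=\Witt \sum_{i=0}^{n-1}\vec\eta^{p^i}$ we have $g_{\vec\xi}(\vec\delta)=h(\vec\xi\Witt \cdot\vec\delta)$, whence $\im g_{\vec\xi}=\im h$. As $\im h$ is an additive subgroup of the cyclic group $W_m({\ma F}_p)$, it equals the whole group as soon as it is not contained in the maximal subgroup of vectors with vanishing first component, i.e. as soon as some $h(\vec\eta)$ has nonzero first component. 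But the first Witt component of $h(\vec\eta)$ is $\sum_{i=0}^{n-1}\eta_1^{p^i}=\Tr_{\F/\F_p}(\eta_1)$, and the trace of the separable extension $\F/\F_p$ is surjective onto ${\ma F}_p$; choosing $\eta_1$ with $\Tr_{\F/\F_p}(\eta_1)\neq0$ shows $\im h=W_m({\ma F}_p)$. Hence $g_{\vec\xi}$ is surjective and $[k(\vec y_{\vec\xi}):k]=p^m$.

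The main obstacle is making the three identifications that drive the argument precise: that $\im g_{\vec\xi}=\im h$ genuinely uses invertibility of $\vec\xi$ (not merely the formal condition $\xi_1\neq0$), that additivity of $h$ forces $\im h$ to be a term of the subgroup chain of ${\ma Z}/p^m{\ma Z}$ so that it is determined solely by whether it contains a unit, and that the first Witt component of $h$ is literally the ordinary field trace $\Tr_{\F/\F_p}$. Once these are in place, the whole matter collapses to a one-line computation on first components, exactly paralleling the field-theoretic Lemma \ref{L6.2'}.
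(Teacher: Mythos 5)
Your proof is correct and takes essentially the same route as the paper: both reduce $[k(\vec y_{\vec\xi}):k]$ to the size of the subgroup $\im g_{\vec\xi}$ of the cyclic group $W_m({\ma F}_p)$ and detect surjectivity on first Witt components, where the first component of $g_{\vec\xi}(\vec\delta)$ is $\sum_{i=0}^{n-1}(\xi_1\delta_1)^{p^i}=\Tr_{\F/{\ma F}_p}(\xi_1\delta_1)$. The only cosmetic differences are that you establish nonvanishing by invoking surjectivity of the field trace (and handle the converse contrapositively via the maximal subgroup of elements with vanishing first component), whereas the paper shows the same map $\psi(\delta)=\sum_{i=0}^{n-1}(\xi_1\delta)^{p^i}$ is nonzero by counting its $p^{n-1}$ roots and simply declares the converse argument reversible.
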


\begin{proof} In the expression $\Witt \sum_{i=0}^{n-1}
(\vec \xi\Witt \cdot \vec \delta)^{p^i}$, the first component is
$\sum_{i=0}^{n-1}(\xi_1\delta_1)^{p^i}$.
Assume that $\vec \xi$ is invertible, that is, $\xi_1\neq 0$.
Consider the map $\psi\colon\F\lra{\ma F}_p$ given by
$\psi(\delta)=\sum_{i=0}^{n-1}(\xi_1 \delta)^{p^i}$. This
map is not zero since the polynomial 
$p(x)=(\xi_1x)^{p^{n-1}}+\cdots+(\xi_1x)^p+
(\xi_1 x)=0$ has $p^{n-1}$ roots. Therefore, if we
consider the extension $k(y_{\xi_1})/k$ given by
$y_{\xi_1}^p-y_{\xi_1}=\alpha_1\neq 0$,
the group fixing the extension is not all $\F$ and in particular
$[k(y_{\xi_1}):k]=p$. Therefore $[k(\vec y_{\vec \xi}):k]=p^m$.

Conversely, in case $[k(\vec y_{\vec \xi}):k]=p^m$, necessarily
$[k(y_{\xi_1}):k]=p$ and the argument is reversible and
it follows $\vec\xi$ is invertible. 
\end{proof} 

\begin{corolario}\label{C8.5'} The cyclic subextensions
of degree $p^m$ are given by $k(\vec y_{\vec\xi})$ 
where $\vec y_{\vec\xi}$ is given by
{\rm{(\ref{Eq**})}}, where $\vec\xi$ is invertible
and we have
$$
\vec y^p_{\vec\xi}\Witt -\vec y_{\vec \xi}=\vec\xi\Witt\cdot\vec\alpha.
\eqno{\fin}
$$
\end{corolario}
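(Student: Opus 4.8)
The plan is to show that the family $\{k(\vec y_{\vec\xi})\mid \vec\xi\text{ invertible}\}$ coincides with the set of all cyclic subextensions of $K/k$ of degree $p^m$. The Artin--Schreier--Witt identity $\vec y_{\vec\xi}^p\Witt-\vec y_{\vec\xi}=\vec\xi\Witt\cdot\vec\alpha$ has already been verified just before Proposition~\ref{P8.5}, so that assertion is free; moreover, as noted after~(\ref{Eq**}), $k(\vec y_{\vec\xi})/k$ is cyclic, and Proposition~\ref{P8.5} gives $[k(\vec y_{\vec\xi}):k]=p^m$ precisely when $\vec\xi$ is invertible. Hence every $k(\vec y_{\vec\xi})$ with $\vec\xi$ invertible is indeed a cyclic subextension of degree $p^m$, which is the easy inclusion.

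For the reverse inclusion I would argue by counting. First I would eliminate the redundancy in the parametrization: if $\vec j\in W_m(\ma F_p)$ is a unit then $\vec j^{p^i}=\vec j$, so from the definition~(\ref{Eq**}) and $(\vec j\Witt\cdot\vec\xi)^{p^i}=\vec j\Witt\cdot\vec\xi^{p^i}$ one obtains $\vec y_{\vec j\Witt\cdot\vec\xi}=\vec j\Witt\cdot\vec y_{\vec\xi}$; applying Proposition~\ref{P8.2} with $\vec z=\vec 0$ then gives $k(\vec y_{\vec j\Witt\cdot\vec\xi})=k(\vec y_{\vec\xi})$. Since $W_m(\ma F_p)^{\ast}$ has order $p^m-p^{m-1}$ and acts freely on the $q^m-q^{m-1}$ invertible vectors $\vec\xi$ (those with $\xi_1\neq 0$), these split into exactly $\frac{q^m-q^{m-1}}{p^m-p^{m-1}}$ orbits. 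On the other hand the cyclic subextensions of degree $p^m$ are the fixed fields of the kernels of the $q^m-q^{m-1}$ surjections $G\lra\ma Z/p^m\ma Z$, two of which share a kernel exactly when they differ by one of the $p^m-p^{m-1}$ automorphisms of $\ma Z/p^m\ma Z$; hence their number is also $\frac{q^m-q^{m-1}}{p^m-p^{m-1}}$, matching the count recorded before Proposition~\ref{P8.5}.

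It therefore suffices to check that distinct orbits produce distinct fields, for then the orbit--to--field map is an injection between finite sets of equal cardinality, hence a bijection, giving the reverse inclusion. Suppose $k(\vec y_{\vec\xi})=k(\vec y_{\vec\xi'})$ with $\vec\xi,\vec\xi'$ invertible. Proposition~\ref{P8.2} provides a unit $\vec j\in W_m(\ma F_p)$ and $\vec z\in W_m(k)$ with $\vec\xi'\Witt\cdot\vec\alpha=\vec j\Witt\cdot\vec\xi\Witt\cdot\vec\alpha\Witt+\wp(\vec z)$, whence $\vec\eta\Witt\cdot\vec\alpha=\wp(\vec z)$ for $\vec\eta:=\vec\xi'\Witt-\vec j\Witt\cdot\vec\xi\in W_m(\F)$. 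The crux is to conclude $\vec\eta=\vec 0$, i.e.\ $\vec\xi'=\vec j\Witt\cdot\vec\xi$: on the one hand $\vec\eta\Witt\cdot\vec\alpha=\wp(\vec z)$ forces $k(\vec y_{\vec\eta})=k$, since $\vec y_{\vec\eta}\Witt-\vec z\in W_m(\ma F_p)\subseteq W_m(k)$; on the other hand, for $\vec\eta\neq\vec 0$ the field $k(\vec y_{\vec\eta})$ is the fixed field of $\ker g_{\vec\eta}$, so $[k(\vec y_{\vec\eta}):k]=|\im g_{\vec\eta}|>1$, a contradiction.

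The main obstacle is precisely this last degree computation. Writing $\vec\eta=\vec p^{\,s}\Witt\cdot u$ with $u$ a unit and $0\le s<m$ --- using that the Galois ring $W_m(\F)$ is a finite chain ring, so every nonzero element is a unit times a power of $\vec p$ --- the $W_m(\ma F_p)$--linearity of $g_{\vec\eta}$ together with the surjectivity of the Witt trace $W_m(\F)\lra W_m(\ma F_p)$ yields $\im g_{\vec\eta}=\vec p^{\,s}W_m(\ma F_p)$, of order $p^{m-s}\ge p$. This is where the local--ring structure of $W_m(\F)$ and the surjectivity of the trace, only implicit in the setup above, would have to be made explicit.
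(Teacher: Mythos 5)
Your proof is correct, but it is worth spelling out how it relates to the paper: the paper gives no written argument for this corollary at all --- it is flagged as immediate from the identity $\vec y_{\vec\xi}^p\Witt-\vec y_{\vec\xi}=\vec\xi\Witt\cdot\vec\alpha$, from Proposition~\ref{P8.5}, and from the count $\frac{q^m-q^{m-1}}{p^m-p^{m-1}}$ recorded just before that proposition. Your easy inclusion is exactly that intended reading, and your exhaustiveness argument reconstructs the counting scheme the authors do write out explicitly in the proof of Proposition~\ref{P8.5''} (parametrize, quotient by the free action of the $p^m-p^{m-1}$ units of $W_m({\ma F}_p)$, match cardinalities, and reduce to injectivity via Proposition~\ref{P8.2}). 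The genuine difference lies in the injectivity step: in Proposition~\ref{P8.5''} the analogous step is settled by the independence of the generators $\vec z_1,\ldots,\vec z_n$, whereas here all the $\vec y_{\vec\xi}$ are built from the single vector $\vec y$, so one truly needs that $\vec\eta\neq\vec 0$ forces $g_{\vec\eta}\not\equiv 0$, i.e.\ the nondegeneracy of the pairing $(\vec\eta,\vec\delta)\mapsto g_{\vec\eta}(\vec\delta)=\Tr(\vec\eta\Witt\cdot\vec\delta)$ on the Galois ring. Your chain-ring factorization $\vec\eta=\vec p^{\,s}\Witt\cdot\vec u$ combined with surjectivity of the Witt trace supplies exactly this ingredient, which appears nowhere in the paper: the closest is the first-coordinate argument inside the proof of Proposition~\ref{P8.5}, which only treats the case $s=0$ (that is, $\eta_1\neq 0$). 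So your route buys a complete proof where the paper has only an assertion, at the cost of two auxiliary Galois-ring facts you rightly flag; both are cheap to make explicit (the trace surjectivity follows by reducing modulo $\vec p$ to the surjective trace ${\ma F}_q\lra{\ma F}_p$ and scaling by a unit of $W_m({\ma F}_p)$, and the chain-ring structure follows from $\vec p\Witt\cdot(x_1,\ldots,x_m)=(0,x_1^p,\ldots,x_{m-1}^p)$ together with perfectness of $\F$), and your appeal to the paper's observation that $k(\vec y_{\vec\eta})$ is the fixed field of $\ker g_{\vec\eta}$ correctly converts the nondegeneracy into the needed degree statement.
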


In particular, taking a basis $\{\vec\mu_1,\ldots,\vec\mu_n\}$ of $W_m(\F)$
over $W_m({\ma F}_p)$, we have that $K=k(\vec y)=k(\vec y_{\vec \mu_1},
\ldots, \vec y_{\vec \mu_n})$.

\begin{proposicion}\label{P8.5''} Let $k$ be a field such that
$\F\subseteq k$.
Let $K/k$ be an abelian extension
with Galois group isomorphic to $W_m(\F)$. 
Assume that $K=k(\vec z_1,\ldots,\vec z_n)$ 
with $\vec z_i\in W_m(K)$, 
$\Gal(k(\vec z_i)/k)\cong {\ma Z}/p^m{\ma Z}$, $1\leq i\leq n$.
Then all the subextensions $k\subseteq k(\vec z)\subseteq K$
such that $\Gal(k(\vec z)/k)\cong {\ma Z}/p^m{\ma Z}$ 
are given by
\[
\vec z=\Witt \sum_{i=1}^n\vec \alpha_i\Witt\cdot \vec z_i,
\]
with $\vec \alpha_i\in W_m({\ma F}_p)$, $1\leq i\leq n$
and some $\vec \alpha_{i_0}$ invertible.
\end{proposicion}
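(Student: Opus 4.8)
The plan is to realize $G:=\Gal(K/k)$ concretely as a free $W_m({\ma F}_p)$--module of rank $n$ and then translate the statement into a question about surjective $W_m({\ma F}_p)$--linear functionals. First I would write $\wp(\vec z_i)=\vec x_i\in W_m(k)$ for each $i$. Since $\vec x_i$ is fixed by every $\sigma\in G$, the identity $\wp(\sigma\vec z_i\Witt-\vec z_i)=\sigma\wp(\vec z_i)\Witt-\wp(\vec z_i)=\vec 0$ shows that $\vec c_i(\sigma):=\sigma\vec z_i\Witt-\vec z_i$ lies in $W_m({\ma F}_p)$, the kernel of $\wp$. A short check using $\sigma(\tau\vec z_i)=\vec z_i\Witt+\vec c_i(\sigma)\Witt+\vec c_i(\tau)$ shows $\vec c_i\colon G\lra W_m({\ma F}_p)$ is a homomorphism with kernel $\Gal(K/k(\vec z_i))$ and image $\Gal(k(\vec z_i)/k)\cong W_m({\ma F}_p)$, so each $\vec c_i$ is onto. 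Assembling these, $\theta:=(\vec c_1,\ldots,\vec c_n)\colon G\lra W_m({\ma F}_p)^n$ has kernel $\bigcap_i\Gal(K/k(\vec z_i))=\Gal(K/k(\vec z_1,\ldots,\vec z_n))=\{\Id\}$; since $|G|=p^{mn}=|W_m({\ma F}_p)^n|$, $\theta$ is an isomorphism.

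Next, for arbitrary $\vec\alpha_i\in W_m({\ma F}_p)$ set $\vec z=\Witt\sum_{i=1}^n\vec\alpha_i\Witt\cdot\vec z_i$. Because the components of each $\vec\alpha_i$ lie in ${\ma F}_p\subseteq k$ they are fixed by $G$, so $\sigma$ commutes with $\vec\alpha_i\Witt\cdot(-)$ and
\[
\sigma\vec z=\Witt\sum_{i=1}^n\vec\alpha_i\Witt\cdot(\vec z_i\Witt+\vec c_i(\sigma))=\vec z\Witt+\chi(\sigma),\qquad \chi(\sigma):=\Witt\sum_{i=1}^n\vec\alpha_i\Witt\cdot\vec c_i(\sigma).
\]
Thus $\chi\colon G\lra W_m({\ma F}_p)$ is a homomorphism, $\Gal(K/k(\vec z))=\ker\chi$, and $\Gal(k(\vec z)/k)\cong\im\chi$ is cyclic. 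Transported through $\theta$, the map $\chi$ is exactly the functional $(\vec c_1,\ldots,\vec c_n)\mapsto\Witt\sum_i\vec\alpha_i\Witt\cdot\vec c_i$, whose image is the ideal $(\vec\alpha_1,\ldots,\vec\alpha_n)$ of $W_m({\ma F}_p)$. Since $W_m({\ma F}_p)\cong{\ma Z}/p^m{\ma Z}$ is local with maximal ideal the set of non--units (those with first component $0$), this ideal is the whole ring---equivalently $\chi$ is onto and $\Gal(k(\vec z)/k)\cong{\ma Z}/p^m{\ma Z}$---if and only if some $\vec\alpha_{i_0}$ is invertible. This settles the ``if'' half.

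For the converse, let $k(\vec z)\subseteq K$ be any subextension with $\Gal(k(\vec z)/k)\cong{\ma Z}/p^m{\ma Z}$, put $H=\Gal(K/k(\vec z))$ and let $\chi\colon G\lra G/H\cong W_m({\ma F}_p)$ be the quotient map, a surjection. The key observation is that any homomorphism of abelian groups between $W_m({\ma F}_p)$--modules is automatically $W_m({\ma F}_p)$--linear, since the module action factors through ${\ma Z}$; hence, viewing $\chi$ on $W_m({\ma F}_p)^n$ through $\theta$, it is a $W_m({\ma F}_p)$--linear functional and so is given by a tuple $\vec\alpha_1,\ldots,\vec\alpha_n\in W_m({\ma F}_p)$ as above. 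Surjectivity of $\chi$ forces some $\vec\alpha_{i_0}$ to be invertible by the previous paragraph. Setting $\vec z'=\Witt\sum_i\vec\alpha_i\Witt\cdot\vec z_i$, the computation of the second paragraph gives $\sigma\vec z'=\vec z'\Witt+\chi(\sigma)$, so $\sigma$ fixes $\vec z'$, and hence all of $k(\vec z')$, precisely when $\sigma\in\ker\chi=H$; therefore $\Gal(K/k(\vec z'))=H$ and $k(\vec z')=k(\vec z)$, as required.

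The hard part will be the identification of $\chi$ with a linear functional in the converse: one must justify that arbitrary group characters $G\to W_m({\ma F}_p)$ are $W_m({\ma F}_p)$--linear and therefore correspond to coefficient tuples through $\theta$. Once this and the local--ring characterization of invertibility in $W_m({\ma F}_p)$ are in place, the remainder is the routine Witt--vector bookkeeping of the second paragraph.
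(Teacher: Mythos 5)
Your proof is correct, and it takes a genuinely different route from the paper's. The paper argues via generators and counting: it computes $\wp(\vec z)=\Witt\sum_{i=1}^n\vec\alpha_i\Witt\cdot\vec\gamma_i$, rules out $\wp(\vec z)\in\wp(W_m(k))$ by the contradiction that otherwise $\vec z_{i_0}$ would lie in $k(\vec z_1,\ldots,\vec z_{i_0-1},\vec z_{i_0+1},\ldots,\vec z_n)$, forcing $[K:k]\leq p^{m(n-1)}$, then invokes Proposition \ref{P8.2} to decide when two tuples give the same field (namely $\vec c=\vec 0$ and $\vec j\in W_m({\ma F}_p)^{\ast}$), and finally counts $\frac{q^m-q^{m-1}}{p^m-p^{m-1}}$ distinct fields, which exhausts the number of cyclic subextensions of degree $p^m$. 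You instead linearize the whole problem: the coordinates $\vec c_i(\sigma)=\sigma\vec z_i\Witt-\vec z_i$ assemble into an explicit isomorphism $\theta\colon G\lra W_m({\ma F}_p)^n$, under which the intermediate fields with group ${\ma Z}/p^m{\ma Z}$ correspond to surjective $W_m({\ma F}_p)$--linear functionals; since $W_m({\ma F}_p)\cong{\ma Z}/p^m{\ma Z}$ is local, the functional $(\vec c_1,\ldots,\vec c_n)\mapsto\Witt\sum_{i=1}^n\vec\alpha_i\Witt\cdot\vec c_i$ is onto exactly when some $\vec\alpha_{i_0}$ is a unit, and the automatic $W_m({\ma F}_p)$--linearity of additive maps (the ring being a quotient of ${\ma Z}$) shows every character of $G$ has this shape. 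Each approach buys something: the paper's method yields the explicit count of subextensions as a by-product, while yours dispenses with both the counting and Proposition \ref{P8.2}, proves the a priori stronger statement that \emph{every} intermediate field with group ${\ma Z}/p^m{\ma Z}$ --- not only one already presented as $k(\vec z)$ --- is of the stated form, and in the forward direction directly produces $\Gal(k(\vec z)/k)\cong\im\chi=W_m({\ma F}_p)$, hence degree exactly $p^m$; by contrast, the paper's displayed verification establishes only $\wp(\vec z)\notin\wp(W_m(k))$ and leaves the exact-degree claim resting on the criterion $[k(\vec z):k]=p^m\iff\gamma_1\notin\wp(k)$, a step your argument simply does not need.
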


\begin{proof}
Let $G:=\Gal(K/k)=\langle\sigma_1,\ldots,\sigma_n\rangle$ 
such that $\sigma_i\vec z_j=\vec z_j\Witt +\vec \delta_{ij}$
with $\vec \delta_{ij}=
\begin{cases} \vec 1&\text{if $i=j$}\\ 
\vec 0&\text {if $i\neq j$}\end{cases}$.

Let $\vec \alpha_1,\ldots,\vec \alpha_n\in W_m({\ma F}_p)$ and let
$\vec z =\Witt \sum_{i=1}^n\vec \alpha_i\Witt\cdot \vec z_i$. Let
$\wp(\vec z_i)=\vec z_i^p\Witt -\vec z_i=\vec \gamma_i$ with 
$\vec \gamma_i=\big(\gamma_{i1},\ldots,\gamma_{im}\big)$ and
$\gamma_{i1}\notin \wp(k)$. Then
\[
\wp(\vec z)=\Witt \sum_{i=1}^n\vec \alpha_i\Witt\cdot \wp(\vec z_i)
=\Witt \sum_{i=1}^n\vec \alpha_i\Witt\cdot \vec \gamma_i=:\vec \gamma,
\]
with $\gamma_1=\sum_{i=1}^n\alpha_{i1}\gamma_{i1}$. We have
$[k(\vec z):k]=p^m\iff \gamma_1\notin \wp(k)$.

Now assume $\wp(\vec z)=\Witt \sum_{i=1}^n\vec 
\alpha_i\Witt\cdot \vec \gamma_i=\vec \gamma=\wp(\vec A)$ for some
$\vec A\in W_m(k)$. Then $\wp(\vec z\Witt - \vec A)=\vec 0$, 
that is, $\vec z\Witt - \vec A\in W_m({\ma F}_p)$, $\vec z=\vec \beta
\Witt + \vec A$ with $\vec \beta\in W_m({\ma F}_p)$.
In this case, if existed $\vec \alpha_{i_0}$ invertible, then
$\vec z=\Witt \sum_{i=1}^n\vec \alpha_i\Witt\cdot \vec z_i=\vec \beta
\Witt + \vec A$ so that
\begin{gather*}
\vec z_{i_0}=\Witt - \Witt \sum_{\substack{i=1\\ i\neq i_0}}^n
\vec \alpha_{i_0}^{-1}\Witt \cdot \vec \alpha_i\Witt\cdot \vec z_i\Witt +
\vec \alpha_{i_0}^{-1}\Witt \cdot \vec \beta\Witt + \vec \alpha_{i_0}^{-1}
\Witt \cdot \vec A.
\intertext{Now, because 
$\vec \beta\in W_m({\ma F}_p)\subseteq W_m(k)$ and
$\vec A\in W_m(k)$, it follows that $\vec z_{i_0}\in k(\vec z_1,\ldots,
\vec z_{i_0-1},\vec z_{i_0+1},\ldots, \vec z_n)$, that}
K=k(\vec z_1,\ldots,\vec z_n)=k(\vec z_1,\ldots,
\vec z_{i_0-1},\vec z_{i_0+1},\ldots, \vec z_n)
\end{gather*}
and that $[K:k]\leq p^{m(n-1)}<p^{mn}$, which is absurd.

In summary, if some $\vec \alpha_{i_0}$ were invertible, $\vec \gamma=
\wp(\vec z)\notin \wp(W_m(k))$.

With this procedure we obtain $t$ extensions $k(\vec z)$ with 
$[k(\vec z):k]=p^m$ and $t=\big|\{(\vec\alpha_1,\ldots,\vec\alpha_n)\mid
\vec\alpha_i\in W_m({\ma F}_p)\text{\ and some $\vec \alpha_i$ invertible}
\}\big|$.

We have $\vec\alpha_1,\ldots,\vec\alpha_n\in W_m({\ma F}_p)$ are
non invertible if and only if $\alpha_{11}=\alpha_{21}=\cdots=\alpha_{n1}
=0$ where $\vec \alpha_i=(\alpha_{i1},\ldots,\alpha_{im})$, $1\leq i\leq n$.
Thus $t=\big|W_m({\ma F}_p)^n\big|-\big|W_{m-1}({\ma F}_p)^n\big|=
p^{nm}-p^{n(m-1)}=q^m-q^{m-1}$.

Now, two of these extensions $k(\vec z)$, $k(\vec w)$ satisfy
$k(\vec z)=k(\vec w)\iff \vec z=\vec j\Witt \cdot \vec w\Witt + \vec c$ with
$\vec j\in W_m({\ma F}_p)$ invertible and $\vec c\in W_m(k)$
(Proposition \ref{P8.2}). Since
$\vec z$ and $\vec w$ are $``\text{linear}"$ combinations of $\vec z_1,\ldots,
\vec z_n$ over $W_m({\ma F}_p)$, $\vec c=\vec 0$ and $\vec j\in W_m(
{\ma F}_p)^{\ast}$. Finally,
$\big|W_m({\ma F}_p)^{\ast}\big|=\big|W_m({\ma F}_p)\big|-
\big|W_{m-1}({\ma F}_p)^\big|=p^m-p^{m-1}$.

In this way, we have obtained $\frac{q^m-q^{m-1}}{p^m-p^{m-1}}$
distinct extensions $k(\vec z)/k$ of degree $p^m$ and therefore
all of them.
\end{proof}

To study how to generate this type of extensions, we have
the same result as in Theorem \ref{T7.1}. Let
$K=k(\vec y)$ be such that $\vec y^q\Witt -\vec y=\vec \alpha$ and
where $\Gal(K/k)\cong W_m(\F)$. Let
$L=k(\vec z)$ be such that $\vec z^q\Witt-\vec z=\vec \beta$.
For $\vec A_{n-1},\vec A_{n-2},\ldots, \vec A_1, \vec A_0 \in W_m(\F)$
we define $\mc R (\vec X)\in W_m(\F)[\vec X]$ by 
\[
\mc R(\vec X):=
\vec A_{n-1}\Witt \cdot \vec X^{p^{n-1}}\Witt +
\vec A_{n-2}\Witt \cdot \vec X^{p^{n-2}}\Witt +\cdots\Witt +
\vec A_{1}\Witt \cdot \vec X^{p}\Witt + \vec A_0\Witt \cdot \vec X.
\]

\begin{teorema}\label{T8.6} With the above notation, we have
$k(\vec y)=k(\vec z)$ if and only if (there exist 
$\vec A_{n-1},\vec A_{n-2},\ldots, 
\vec A_1, \vec A_0
\in W_m(\F)$ satisfying $\mc R(\vec \beta)=0$ with $\vec \beta \in 
W_m(\F) \iff \vec \beta=0$) and $\vec D\in W_m(k)$ such that 
\begin{equation}\label{Eq8.6}
\vec z=\mc R(\vec y)\Witt +\vec D.
\end{equation}
\end{teorema}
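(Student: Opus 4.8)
The plan is to carry the proof of Theorem~\ref{T7.2} (in the case $m=n$, i.e.\ Theorem~\ref{T7.1}) over to the Witt setting with the same structure, replacing the linear operator $l(X)$ by $\mc R(\vec X)$ and the field $\G_f$ by the Galois ring $W_m(\F)$; the only genuinely new point is that the relevant Moore matrix now has entries in a ring rather than a field. Fix $G=\Gal(K/k)=\langle\sigma_1,\ldots,\sigma_n\rangle$ with $\sigma_i(\vec y)=\vec y\Witt+\vec\mu_i$, where $\{\vec\mu_1,\ldots,\vec\mu_n\}$ is a $W_m({\ma F}_p)$--basis of $W_m(\F)$ (Propositions~\ref{P8.3} and~\ref{P8.1}). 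I first record that $\mc R$ is additive and even $W_m({\ma F}_p)$--linear: from $(\vec a\Witt+\vec b)^{p^j}=\vec a^{p^j}\Witt+\vec b^{p^j}$ and the distributivity of $\Witt\cdot$ over $\Witt+$ one gets $\mc R(\vec a\Witt+\vec b)=\mc R(\vec a)\Witt+\mc R(\vec b)$, while for $\vec c\in W_m({\ma F}_p)$ one has $\vec c^{\,p^j}=\vec c$, hence $\mc R(\vec c\Witt\cdot\vec X)=\vec c\Witt\cdot\mc R(\vec X)$. In particular $\sigma_i(\mc R(\vec y))=\mc R(\vec y\Witt+\vec\mu_i)=\mc R(\vec y)\Witt+\mc R(\vec\mu_i)$.

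For the forward implication, suppose $k(\vec y)=k(\vec z)=K$. Since $\vec z$ is a root of $\vec z^{\,q}\Witt-\vec z=\vec\beta\in W_m(k)$, each $\sigma_i$ permutes its roots, so $\sigma_i(\vec z)=\vec z\Witt+\vec\gamma_i$ with $\vec\gamma_i\in W_m(\F)$; as $\vec\gamma_i$ is the image of $\sigma_i$ under the isomorphism $G\cong W_m(\F)$ attached to $\vec z$ (Proposition~\ref{P8.3}), the set $\{\vec\gamma_1,\ldots,\vec\gamma_n\}$ is again a $W_m({\ma F}_p)$--basis of $W_m(\F)$. I then look for $\vec A_0,\ldots,\vec A_{n-1}\in W_m(\F)$ solving
\[
\Witt\sum_{j=0}^{n-1}\vec A_j\Witt\cdot\vec\mu_i^{p^j}=\vec\gamma_i,\qquad 1\le i\le n,
\]
that is, the system $\mc M\Witt\cdot\vec A=\vec\gamma$ with coefficient matrix the Moore matrix $\mc M=\big(\vec\mu_i^{p^j}\big)_{1\le i\le n,\,0\le j\le n-1}$ over $W_m(\F)$.

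The main obstacle is the invertibility of $\mc M$ over the ring $W_m(\F)$, which in the field case was the Moore--determinant step. I would handle it through the first--component ring homomorphism $\pi\colon W_m(\F)\lra\F$, $\pi(\vec a)=a_1$: an element $\vec a\in W_m(\F)$ is a unit exactly when $a_1\neq0$, so $\mc M$ is invertible iff $\det\mc M$ is a unit iff $\pi(\det\mc M)=\det\pi(\mc M)\neq0$. Now $\pi(\mc M)=\big(\mu_{i1}^{p^j}\big)$ is the classical Moore matrix over $\F$ of the first components $\mu_{i1}$, and by Remark~\ref{O8.1'} these form an ${\ma F}_p$--basis of $\F$; hence $\det\pi(\mc M)\neq0$ by the Moore--determinant computation already carried out in the proof of Theorem~\ref{T7.2}. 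Thus $\mc M$ is invertible and the system has a unique solution $\vec A_0,\ldots,\vec A_{n-1}\in W_m(\F)$. With these coefficients $\mc R(\vec\mu_i)=\vec\gamma_i$, so $\sigma_i\big(\vec z\Witt-\mc R(\vec y)\big)=\vec z\Witt-\mc R(\vec y)$ for all $i$, whence $\vec D:=\vec z\Witt-\mc R(\vec y)\in W_m(k)$ and $\vec z=\mc R(\vec y)\Witt+\vec D$. Finally, for $\vec\beta=\Witt\sum_i\vec c_i\Witt\cdot\vec\mu_i$ with $\vec c_i\in W_m({\ma F}_p)$, linearity gives $\mc R(\vec\beta)=\Witt\sum_i\vec c_i\Witt\cdot\vec\gamma_i$, which vanishes iff all $\vec c_i=\vec 0$ iff $\vec\beta=\vec 0$, since $\{\vec\gamma_i\}$ is a basis; this is the required kernel condition.

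For the converse, I would assume the kernel condition holds and $\vec z=\mc R(\vec y)\Witt+\vec D$ with $\vec D\in W_m(k)$. Then $\vec z\in K$, and for any $\sigma\in G$ with $\sigma(\vec y)=\vec y\Witt+\vec\mu$, $\vec\mu\in W_m(\F)$, the computation above yields $\sigma(\vec z)=\mc R(\vec y\Witt+\vec\mu)\Witt+\vec D=\vec z\Witt+\mc R(\vec\mu)$. Hence $\sigma$ fixes $\vec z$ iff $\mc R(\vec\mu)=\vec 0$ iff $\vec\mu=\vec 0$ iff $\sigma=\Id$, so the stabilizer of $\vec z$ in $G$ is trivial and $k(\vec z)=K=k(\vec y)$, completing the proof.
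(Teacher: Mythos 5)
Your proposal is correct and follows essentially the same route as the paper's own proof: both reduce to solving the Witt--Moore system $\vec M\Witt\cdot\vec A=\vec\gamma$ and establish invertibility of $\vec M$ by observing that the first Witt component of $\det\vec M$ is the classical Moore determinant of the first components (a unit criterion the paper states as $\det\vec M=(\det M,\ldots)$ with $\det M\in\f$), which is nonzero by the computation in Theorem \ref{T7.2} together with Remark \ref{O8.1'}. You have merely written out in full the kernel condition and the converse, which the paper dispatches with ``the rest of the proof is analogous to that of Theorem \ref{T7.2}.''
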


\begin{proof} The  proof is analogous to the one of Theorem
\ref{T7.2} using the formalism of Witt operations.
We will give just a few details. 

First assume that $k(\vec y)=k(\vec z)$.
Let $\sigma_i\in G$ given by
$\sigma_i(\vec y)=\vec y\Witt + \vec \mu_i$, $1\leq i\leq n$. Let
$\vec w=\mc R(\vec y)$. Then $\sigma \in G$ is given by
$\sigma=\sigma_1^{b_1}\cdots\sigma_n^{b_n}$ with $b_i\in {\ma Z}$,
$0\leq b_i\leq p^m-1$, $1\leq i\leq n$. We have
$\sigma(\vec w)=\vec w\Witt + \mc R\big(\Witt \sum_{i=1}^n \vec b_i
\Witt \cdot \vec \mu_i\big)$. In particular $\sigma_i(\vec w)=\vec w+
\mc R(\vec \mu_i)$.

If $\sigma_i(\vec z)=\vec z\Witt +\vec \xi_i$, $1\leq i\leq n$, then
$\{\vec\xi_1,\ldots,\vec\xi_n\}$ is a basis of $W_m(\F)$ over
$W_m({\ma F}_p)$.
We want to find $\vec A_{0}, \ldots,\vec A_{n-1}\in W_m(\F)$ such that
$\mc R(\vec\mu_i)=\vec \xi_i$, $1\leq i\leq n$.

We have
\begin{gather*}
\mc R(\vec\mu_i)=\vec \xi_i, 1\leq i\leq n 
\iff \vec M\Witt \cdot\left[\begin{array}{c}\vec A_0\\ \vec A_1\\ \vdots\\ \vec A_{n-2}\\
\vec A_{n-1}\end{array}\right]=\left[\begin{array}{c}\vec \xi_1\\ \vec \xi_2\\ \vdots\\ 
\vec \xi_{n-1}\\ \vec\xi_n\end{array}\right]
\end{gather*}
where $\vec M$ is the matrix
\[
\vec M=\vmatriz\mu.
\]
Now, it is clear that $\det \vec M=(\det M,\ldots, )$ where 
\[
M=\matriz{\mu_1}{\mu}.
\]
We have $\det M\in \f$ so that $\det \vec M$ is a unit of
$W_m(\F)$ so $\vec M$ is invertible. Therefore there exist
such $\vec A_i\in W_m(\F)$ and they are unique satisfying
$\sigma(\vec w)=\vec w\Witt +\vec \xi_i$. The rest of the
proof is analogous to that of Theorem \ref{T7.2}.
\end{proof}

Now we study the case of rational function fields.
Let $k=k_0(T)$ be a rational function field where
$k_0$ is a finite field such that $\F\subseteq k_0$.
We have the result analogous to that of
 \cite[Theorem 5.5]{MaRzVi2013}.

\begin{teorema}\label{T8.7} Let $K/k$ be an extension such that
$\Gal(K/k)\cong W_m(\F)$ and such that
$P_1,\ldots,P_r\in R_T^+$ and possibly $\p$, 
are the ramified primes.
Then $K=k(\vec y)$ is given by
\[
\vec y^q\Witt -\vec y=\vec \beta={\vec\delta}_1
\Witt + \cdots \Witt + {\vec\delta}_r
\Witt + \vec\gamma,
\]
with $y_1^q-y_1=\beta_1$ irreducible,
$\delta_{ij}=\frac{Q_{ij}}{P_i^{e_{ij}}}$, $e_{ij}\geq 0$, $Q_{ij}\in R_T$
and if $e_{ij}>0$, then $e_{ij}=
\lambda_{ij}p^{m_{ij}}$, $\gcd (\lambda_{ij},p)=1$,
$0\leq m_{ij}< n$, $\mcd(Q_{ij},P_i)=1$ and
$\deg (Q_{ij})<\deg (P_i^{e_{ij}})$, and $\gamma_j=f_j(T)\in R_T$ with
$\deg f_j=\nu_j p^{m_j}$ and $\gcd(q,\nu_j)=1$, $0\leq m_j<n$
when $f_j\not\in k_0$.
\end{teorema}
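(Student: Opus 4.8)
The plan is to transport the argument of Theorem \ref{T5.2} to the Witt setting. By Theorem \ref{T8.4} there exist $\vec y$ and $\vec\beta\in W_m(k)$ with $\vec y^q\Witt-\vec y=\vec\beta$ and $K=k(\vec y)$, so the task is to normalize $\vec\beta$. The basic move is the substitution $\vec y\mapsto\vec y\Witt-\vec z$ with $\vec z\in W_m(k)$, which preserves $K=k(\vec y)$ and replaces $\vec\beta$ by $\vec\beta\Witt-(\vec z^q\Witt-\vec z)$, because
\[
(\vec y\Witt-\vec z)^q\Witt-(\vec y\Witt-\vec z)=(\vec y^q\Witt-\vec y)\Witt-(\vec z^q\Witt-\vec z).
\]
Thus I may alter $\vec\beta$ by any vector of the form $\vec z^q\Witt-\vec z$, $\vec z\in W_m(k)$.

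The decisive structural fact is the triangularity of the operator $\vec z\mapsto\vec z^q\Witt-\vec z$: its first coordinate is $z_1^q-z_1$, and for $j\geq 2$ its $j$-th coordinate has the form $z_j^q-z_j+\Phi_j(z_1,\ldots,z_{j-1})$ for a universal isobaric polynomial $\Phi_j$ arising from the Witt carries. This permits normalizing $\vec\beta$ one coordinate at a time, from $\beta_1$ upward, at a fixed prime $P$. Normalizing $\beta_1$ is verbatim the scalar reduction of Theorem \ref{T5.2} for $X^q-X$: expand $\beta_1$ in partial fractions over $k_0(T)$ and, at $P$ and at $\p$, annihilate the poles and degrees that are multiples of $p^n$ by choosing the matching local term of $z_1$, using that the residue field $k_0[T]/(P)$ is perfect to extract $q$-th roots. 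With $z_1$ frozen, the second coordinate of the operator becomes $z_2\mapsto z_2^q-z_2+\Phi_2(z_1)$, so normalizing $\beta_2$ (that is, $\beta_2$ with the now-fixed carry $\Phi_2(z_1)$ subtracted) is again a scalar $X^q-X$ reduction; it does not disturb $\beta_1$, since the first coordinate of the operator is independent of $z_2$. Iterating over $j=1,\ldots,m$ brings every coordinate into normal form.

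Since the auxiliary $\vec z$ used to normalize at $P$ has all components supported at $P$, the carries $\Phi_j$ involve only equally supported lower components, so the whole vector $\vec z^q\Witt-\vec z$ is supported at $P$; hence normalizations at distinct primes do not interfere. Performing this at $P_1,\ldots,P_r$ and then at $\p$ exhibits $\vec\beta$ as a Witt sum $\vec\delta_1\Witt+\cdots\Witt+\vec\delta_r\Witt+\vec\gamma$ with $\vec\delta_i$ supported at $P_i$ and $\vec\gamma$ polynomial, and the exponent conditions $e_{ij}=\lambda_{ij}p^{m_{ij}}$ and degree conditions $\deg f_j=\nu_j p^{m_j}$ with $0\leq m_{ij},m_j<n$ are exactly what the coordinate-wise scalar reductions yield. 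The full-rank hypothesis $\Gal(K/k)\cong W_m(\F)$ forces the first Witt layer $k(y_1)/k$, corresponding to the mod-$p$ quotient $W_m(\F)\to\F$, to be the maximal elementary abelian subextension of degree $q=p^n$, so $y_1^q-y_1=\beta_1$ is irreducible by Proposition \ref{P3.1}. Finally, the ramified primes are read off by testing the degree-$p^m$ cyclic subextensions $k(\vec y_{\vec\xi})$ of Corollary \ref{C8.5'}, since $K$ is their compositum: by the Artin--Schreier--Witt ramification theory, the length-$m$ analogue of Theorem \ref{T5.1}, $P_i$ ramifies in $K/k$ exactly when some $e_{ij}>0$, and $\p$ ramifies exactly when $\vec\gamma\notin W_m(k_0)$.

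The main obstacle will be controlling the carries $\Phi_j$ throughout the coordinate-by-coordinate reduction. Although triangularity guarantees that normalizing $\beta_j$ cannot spoil the already-normalized $\beta_1,\ldots,\beta_{j-1}$, one must verify that the correction $\Phi_j(z_1,\ldots,z_{j-1})$ produced by the lower substitutions creates no pole at $P$ of order exceeding those already present, so that the target normal-form exponents are genuinely attainable. This comes down to checking that each $\Phi_j$ is isobaric of the correct weight and that $q$-th-power extraction in the perfect residue field continues to govern the leading local term, exactly as in the scalar argument of Theorem \ref{T5.2}.
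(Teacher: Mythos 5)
Your proposal is correct and is essentially the paper's own argument: the paper's proof consists precisely of pointers to the ingredients you reconstruct, namely the partial-fraction separation of the denominators (delegated to \cite[Theorem 5.5]{MaRzVi2013}), Schmid's triangular coordinate-by-coordinate reduction of $\vec z\mapsto \vec z^q\Witt - \vec z$ (\cite{Sch36}, \cite{Sch36-0}), the scalar normalization of Theorem \ref{T5.2} applied in each coordinate, and the cyclic subextensions of Corollary \ref{C8.5'} for reading off the ramification. The only difference is one of ordering (you normalize prime by prime and separate at the end, while the paper separates the denominators first and then normalizes), which is immaterial once the triangularity of the operator and the single-prime support of the substitution vectors are in place, as you note.
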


\begin{proof} For the first reduction of separating the irreducible
polynomials in the denominator, we proceed as in
\cite[Theorem 5.5]{MaRzVi2013}. Once we have this 
simplification, we proceed as Schmid \cite[\S 2, page 162]{Sch36}
or \cite[\S 3, page 115]{Sch36-0}, and as in the proof of
Theorem \ref{T5.2} using Corollary \ref{C8.5'}.
\end{proof}

To study the decomposition of $\p$ in a cyclic $p$--extension,
we recall the following result.

\begin{proposicion}\label{P8.8} Let $K/k$ be as in Theorem
{\rm{\ref{T8.7}}} with $n=1$. Let $\gamma_1=\cdots=
\gamma_s=0$, $\gamma_{s+1}\in k_0^{\ast}$, 
$\gamma_{s+1}\not\in \wp(k_0)$ and finally, let $t+1$ 
be the first index such that $f_{t+1}\not\in k_0$ (and
therefore $p\nmid \deg f_{t+1}$). Then, the ramification index
of $\p$ is $p^{m-t}$, the inertia degree of $\p$ is
$p^{t-s}$ and the decomposition number of $\p$ is
$p^s$. In particular $\p$ decomposes fully in
$K/k$ if and only if $\vec\gamma =\vec 0$.
\end{proposicion}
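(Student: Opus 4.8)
The plan is to exploit that $\Gal(K/k)\cong W_m(\F)$ with $n=1$ is cyclic of order $p^m$, so there is a unique chain of intermediate fields $k=K_0\subset K_1\subset\cdots\subset K_m=K$ with $[K_i:K_{i-1}]=p$. Concretely $K_i=k(y_1,\ldots,y_i)$ is the field attached to the truncated Witt vector $\vec y^{(i)}=(y_1,\ldots,y_i)$, defined by $\wp(\vec y^{(i)})=\vec\beta^{(i)}=(\beta_1,\ldots,\beta_i)$. Because the group is cyclic, the decomposition field $Z$ and the inertia field $I$ of $\p$ are themselves members of this chain, say $Z=K_{s'}$ and $I=K_{t'}$, and then the decomposition number is $p^{s'}$, the inertia degree is $p^{t'-s'}$, and the ramification index is $p^{m-t'}$. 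Thus everything reduces to proving $s'=s$ and $t'=t$, i.e.\ to locating in the chain the exact levels at which $\p$ stops splitting and at which it starts ramifying.

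First I would pass to the completion $k_\p$, whose residue field is $k_0$ and whose uniformizer is $\pi=1/T$. Writing $\vec\beta=\vec\delta_1\Witt + \cdots \Witt + \vec\delta_r \Witt + \vec\gamma$ as in Theorem \ref{T8.7}, every component of each $\vec\delta_i$ has strictly positive $\p$--valuation (since $\deg Q_{ij}<\deg P_i^{e_{ij}}$), so $\Witt\sum_i\vec\delta_i\in W_m(\mathfrak{m}_{\p})$. A short lemma---solving $\wp(\vec c)=\vec\eta$ componentwise by the convergent series $c=-\sum_{j\ge 0}\eta^{p^j}$ at the bottom layer and inducting up the Witt length---shows $W_m(\mathfrak{m}_{\p})\subseteq\wp(W_m(\o_\p))$. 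Hence, after a substitution $\vec y\mapsto\vec y\Witt - \vec c$ with $\vec c\in W_m(\o_\p)$ that does not change $K_\p$, the local class of $\vec\beta$ is represented by $\vec\gamma=(f_1(T),\ldots,f_m(T))$ alone, and the three regimes are governed purely by the components of $\vec\gamma$.

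Next I would read off the three phases layer by layer. For $1\le j\le s$ we have $\gamma_j=0$, so $\vec\beta^{(s)}$ is locally in $\wp(W_s(\o_\p))$ and $K_s\otimes_k k_\p$ is split; thus $\p$ splits completely in $K_s/k$ and $Z\supseteq K_s$. At level $s+1$ the constant $\gamma_{s+1}\in k_0^{\ast}\setminus\wp(k_0)$ makes the class of the truncation land in $W_{s+1}(k_0)\setminus\wp(W_{s+1}(k_0))$, so the layer is unramified but inert (this is exactly Proposition \ref{P6.1}(b) at the bottom component, propagated by the Witt structure). As the components $\gamma_{s+1},\ldots,\gamma_t$ are all constants, $K_t\otimes_k k_\p$ stays unramified, its residue field being the cyclic $k_0$--extension attached to $(0,\ldots,0,\gamma_{s+1},\ldots,\gamma_t)\in W_t(k_0)$; since its leading nonzero component sits in position $s+1$, this class has order $p^{t-s}$ in $W_t(k_0)/\wp(W_t(k_0))\cong{\ma Z}/p^t{\ma Z}$, giving residue degree exactly $p^{t-s}$, so $I=K_t$. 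Finally, at level $t+1$ the component $\gamma_{t+1}=f_{t+1}(T)\notin k_0$ has a pole at $\p$ of order prime to $p$ (Proposition \ref{P6.1}(c)), which cannot be removed in normal form; this forces wild ramification from level $t+1$ upward, so $\p$ is totally ramified in $K/K_t$ with $e=p^{m-t}$.

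Combining, $efg=p^{m-t}\cdot p^{t-s}\cdot p^{s}=p^{m}=[K:k]$, which simultaneously confirms that each of the inclusions and bounds above is an equality and pins down the decomposition number $p^{s}$, the inertia degree $p^{t-s}$, and the ramification index $p^{m-t}$. The final assertion is then immediate: $\p$ decomposes fully exactly when the decomposition number is $p^m$, i.e.\ $s=t=m$, i.e.\ $\vec\gamma=\vec 0$. The step I expect to be the genuine obstacle is the layer-by-layer control in Witt length: because Witt addition mixes the lower components nonlinearly into the defining equation of each layer, I cannot naively equate ``the $j$-th layer'' with ``Artin--Schreier by $\gamma_j$''. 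I would resolve this by invoking Schmid's explicit local computation of the different and of the ramification jumps for Artin--Schreier--Witt extensions \cite{Sch36}, together with the normal form of Theorem \ref{T8.7}, which guarantees that the pole orders of the successive components $\gamma_j$ are the genuine ramification data; the splitting and inert phases then follow from the local triviality and unramifiedness established above.
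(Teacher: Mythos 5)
Your argument is essentially correct, but there is nothing in the paper to compare it against step by step: the paper's entire proof of Proposition \ref{P8.8} is the single citation \cite[Proposition 5.6]{MaRzVi2013}, so what you have produced is a reconstruction of the argument behind the cited result, along the classical Hasse--Schmid lines. The main steps check out: the reduction to $\vec\gamma$ via the lemma $W_m({\eu m}_{\p})\subseteq \wp\big(W_m(\o_{\p})\big)$ is valid, since $\deg Q_{ij}<\deg P_i^{e_{ij}}$ gives $v_{\p}(\delta_{ij})>0$ and your geometric-series-plus-induction argument converges in the complete ring; the identification of $K_t$ as a constant field extension shows $\p$ is unramified there with residue degree equal to the order of $(0,\ldots,0,\gamma_{s+1},\ldots,\gamma_t)$ in $W_t(k_0)/\wp(W_t(k_0))\cong {\ma Z}/p^t{\ma Z}$, and that order is indeed $p^{t-s}$ because the vector equals $\vec p^s\Witt\cdot\vec u$ with $u_1\notin\wp(k_0)$, so $\vec u$ has order $p^t$; and the count $efg=p^m$ correctly pins all three invariants at once. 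One simplification worth noting: you do not actually need Schmid's full computation of the ramification jumps. Since the group is cyclic, the inertia field is itself a link $K_{t'}$ of the chain, so once $K_t/k$ is known to be unramified at $\p$ it suffices to show that the \emph{single} layer $K_{t+1}/K_t$ ramifies; and for that one only needs the non-removability of a pole of order prime to $p$ in the $(t+1)$-st component of the normal form under substitutions $\vec y\mapsto \vec y\Witt - \vec c$ (the Witt analogue of the valuation argument in Theorem \ref{T5.1}: for $v(c)<0$ the leading pole of $\wp(c)$ has order divisible by $p$). Your acknowledged worry about Witt addition mixing the layers is the genuinely subtle point, and delegating it to \cite{Sch36} is legitimate --- it is the same external dependence the paper itself incurs through Theorem \ref{T8.7} and through \cite{MaRzVi2013}.
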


\begin{proof}
\cite[Proposition 5.6]{MaRzVi2013}.
\end{proof}

\begin{observacion}\label{O8.8'}{\rm{Another equivalent form
of Proposition \ref{P8.8}, is that $\p$ decomposes fully in $K/k$ 
if and only if
there exists $\vec \theta \in W_m(k)$ such that $\vec\gamma =\vec 
\theta^p\Witt - \theta=\wp(\theta)$.
}}
\end{observacion}

From Proposition \ref{P8.8} and with a proof analogous to that
of Proposition \ref{P6.2''}, we obtain:

\begin{proposicion}\label{P8.9} Let $K/k$ be as in Theorem
{\rm{\ref{T8.7}}}. If $\vec\gamma=\vec 0$, 
then $\p$ decomposes fully.

Conversely, if $\p$ decomposes fully there exists a decomposition
as in Theorem {\rm{\ref{T8.7}}}, with $\vec\gamma=\vec 0$.
\end{proposicion}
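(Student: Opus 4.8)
The plan is to follow the proof of Proposition \ref{P6.2''} almost verbatim, with Artin--Schreier--Witt reductions replacing the scalar Artin--Schreier ones and with Proposition \ref{P8.8} (equivalently Remark \ref{O8.8'}) playing the role of the cyclic building block that Proposition \ref{P6.1} played there. Throughout, the key bookkeeping fact is that the Witt addition, subtraction and multiplication polynomials have integer coefficients, so they send $\p$--integral vectors to $\p$--integral vectors; together with the observation that a componentwise $p$--th power $\vec x\mapsto \vec x^p$ preserves $\p$--integrality, this lets me track the polynomial part $\vec\gamma$ of a reduced form under every operation involved.

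For the direct implication, suppose $\vec\gamma=\vec 0$, so that $\vec\beta=\vec\delta_1\Witt +\cdots\Witt +\vec\delta_r$ is a Witt sum of vectors whose entries are proper fractions with poles only at the finite primes $P_1,\ldots,P_r$; hence every component of $\vec\beta$ has nonnegative $\p$--valuation. By Corollary \ref{C8.5'}, each cyclic subextension of degree $p^m$ has the form $k(\vec y_{\vec\xi})$ with $\vec y_{\vec\xi}^p\Witt -\vec y_{\vec\xi}=\vec\xi\Witt\cdot\vec\beta$ and $\vec\xi$ invertible. Since $\vec\xi\in W_m(\F)$ has constant entries, $\vec\xi\Witt\cdot\vec\beta$ is again $\p$--integral, so the polynomial part of its reduced form (Theorem \ref{T8.7} with $n=1$) is $\vec 0$; Proposition \ref{P8.8} then gives that $\p$ decomposes fully in every such $k(\vec y_{\vec\xi})$. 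Choosing a basis $\{\vec\mu_1,\ldots,\vec\mu_n\}$ of $W_m(\F)$ over $W_m({\ma F}_p)$, we have $K=k(\vec y_{\vec\mu_1},\ldots,\vec y_{\vec\mu_n})$, so the decomposition group of $\p$ in $K/k$ lies in $\bigcap_{i=1}^n\Gal\big(K/k(\vec y_{\vec\mu_i})\big)=\{\Id\}$, whence $\p$ decomposes fully in $K/k$.

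For the converse, assume $\p$ decomposes fully in $K/k$. Because $\Gal(K/k)\cong W_m(\F)\cong\big({\ma Z}/p^m{\ma Z}\big)^n$, I would choose generators $K=k(\vec z_1,\ldots,\vec z_n)$ with each $k(\vec z_i)/k$ cyclic of degree $p^m$ and with $\vec z_i^p\Witt -\vec z_i=\vec\alpha_i$ in the reduced form of Theorem \ref{T8.7} (case $n=1$), as supplied by Proposition \ref{P8.5''}. Full decomposition of $\p$ in $K$ forces full decomposition in each $k(\vec z_i)$, so Proposition \ref{P8.8} (equivalently Remark \ref{O8.8'}) makes the polynomial part of each $\vec\alpha_i$ vanish; that is, every $\vec\alpha_i$ is $\p$--integral. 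Fixing a basis $\{\vec\xi_1,\ldots,\vec\xi_n\}$ of $W_m(\F)$ over $W_m({\ma F}_p)$ and setting $\vec y:=\Witt\sum_{i=1}^n\vec\xi_i\Witt\cdot\vec z_i$, the construction preceding Theorem \ref{T8.4} gives $K=k(\vec y)$. Using $\vec\xi_i^q=\vec\xi_i$ for $\vec\xi_i\in W_m(\F)$ together with $\vec z_i^p\Witt -\vec z_i=\vec\alpha_i$, a direct computation yields $\vec y^q\Witt -\vec y=\Witt\sum_{i=1}^n\vec\xi_i\Witt\cdot\big(\Witt\sum_{j=0}^{n-1}\vec\alpha_i^{p^j}\big)=:\vec\beta$. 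Since each $\vec\alpha_i$ is $\p$--integral, so is every $\vec\alpha_i^{p^j}$, and hence so is $\vec\beta$; therefore the reduced form of $\vec y^q\Witt -\vec y=\vec\beta$ has $\vec\gamma=\vec 0$, which is exactly the asserted decomposition.

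I expect the only genuine obstacle to be the integrality bookkeeping in this last step: one must be certain that the componentwise $p$--th powers and the Witt sums and products (with the constant vectors $\vec\xi_i$) never reintroduce a pole at $\p$, so that the polynomial part of the final reduced form is genuinely $\vec 0$. This is precisely the Witt--theoretic substitute for the additive--polynomial manipulation used in the proof of Proposition \ref{P6.2''}, and once it is in place the decomposition--group and compositum arguments and the appeal to Proposition \ref{P8.8} are routine.
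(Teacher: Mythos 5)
Your proposal is correct and takes essentially the same route as the paper, whose proof is simply declared ``similar to that of Proposition \ref{P6.2}'': the forward direction via the cyclic subextensions of Corollary \ref{C8.5'} together with Proposition \ref{P8.8}, and the converse by putting the generators $\vec z_i$ in reduced form and recombining them as $\vec y=\Witt\sum_{i=1}^n\vec\xi_i\Witt\cdot\vec z_i$ exactly as in the construction preceding Theorem \ref{T8.4}. One small sharpening of your bookkeeping: the components of the $\vec\delta_i$ are proper fractions and thus have \emph{strictly positive} valuation at $\p$, not merely nonnegative as you write --- mere $\p$--integrality would permit a nonzero constant part $\vec\gamma\in W_m(k_0)$ in the reduced form (e.g.\ $(T+1)/T$ is $\p$--integral with polynomial part $1$), so it is this strict vanishing at $\p$, preserved because the integer Witt addition and multiplication polynomials have no constant terms and componentwise Frobenius respects the maximal ideal at $\p$, that actually forces $\vec\gamma=\vec 0$.
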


\begin{proof}
The proof is similar to that of Proposition \ref{P6.2}.
\end{proof}

\end{document}